\newtheorem{theorem}{Theorem}[section]
\newtheorem{corollary}[theorem]{Corollary}
\newtheorem{lemma}[theorem]{Lemma}
\newtheorem{proposition}[theorem]{Proposition}
\theoremstyle{definition}
\newtheorem{definition}[theorem]{Definition}
\theoremstyle{remark}
\newtheorem{remark}[theorem]{Remark}
\numberwithin{equation}{section}
\begin{document}

\title[Stronger Radial Attraction] 
 {Stronger Radial Attraction: A Generalization of Radial Curvature bounded from above}

\author{James J. Hebda }
\address{Department of Mathematics and Statistics,
Saint Louis University, St. Louis, MO 63103, USA}
\email{james.hebda@slu.edu} 

 
 \author{Yutaka Ikeda}

\address{8 Spring Time CT, St. Charles, MO 63303, USA }
\email{yutaka.ikeda08@gmail.com}

\subjclass[2010]{Primary 53C20; Secondary 53C22}


\keywords{stronger radial attraction, radial curvature bounded above, triangle comparison theorem, Hessian of the distance function, triangle pinching theorem, sphere theorem, minimal radius theorem}

\maketitle

\begin{abstract}
This paper introduces and investigates a generalization of the notion of a  pointed Riemannian manifold having its radial curvature bounded from above by that of a model surface of revolution.   
\end{abstract}

\section{Preliminaries}

\subsection{Introduction} 
In the series of papers \cite{HI1,HI2, HI3}, we investigated a generalization of the notion of a Riemannian manifold whose radial curvature was bounded from below by that of a model surface of revolution. We called this generalization weaker radial attraction and deduced a number of consequences which included a triangle comparison theorem, a sphere theorem, and a maximal radius theorem. This paper is complementary to that work in that it studies the analogous generalization of the notion of a Riemannian manifold having radial curvature bounded from above by that of a model surface of revolution. We call this generalization stronger radial attraction. Here we prove a triangle comparison theorem and apply it to estimate the convexity radius at the base point in the Riemannian manifold in terms of the convexity radius of the vertex in the model surface.  We also show if a model surface has stronger radial attraction than a given pointed manifold, then the volume and first eigenvalue of  geodesic balls of small radius about the base point in the manifold are bounded from below by those of the geodesic ball of the same radius in the model space of the same dimension.

In contrast to the weaker radial attraction case, where every based geodesic triangle in the manifold has a corresponding triangle in the model which satisfies a convexity property and an angle comparison, this is no longer true in the stronger radial attraction case.  First, there must be restrictions (See Equation (\ref{e:exist})) on the sides of a based triangle for there to exist a comparison triangle in the model with the same side lengths, but even this is not enough to obtain the convexity property and angle comparison.  To ensure this, one needs to assume further that
the distance from the base point to the side opposite the base vertex of the triangle has a 2--sided derivative along the side. (See Theorem \ref{t:TCT} and Corollary \ref{c:angle}.)

We also address the question of whether radial curvature bounded from above implies stronger radial attraction by proving that is true when the manifold is real analytic in Corollary \ref{c:CS}.  This result depends on Lemma \ref{l:rays} which describes a property of short geodesic rays emanating from a nonconjugate cut point. This result, as far as we know, is new to the literature.

Combining the results of this paper about stronger radial attraction with results about weaker radial attraction from \cite{HI1,HI2}, we prove two  triangle pinching theorems (Theorem \ref{t:TPT} and Corollary \ref{c:TPT}).
These two theorems  are  analogous to the Sphere Theorem and the Minimal Diameter Theorem concerning manifolds whose sectional curvatures are  pinched to lie between $\frac 14$ and $1$.

\subsection{Notation} 
All manifolds under discussion are smooth $(C^\infty)$ unless otherwise noted.  The distance between two points $p$ and $q$ in a Riemannian manifold is denoted by $d(p,q)$, and the distance function from a point $p$ in the manifold is denoted by $L_p$, that is, $L_p(q) = d(p,q)$ for all $q$. The cut locus of a point $p$ in a Riemannain manifold is denoted by $C(p)$.  The reader will find more background material in \cite{HI1,HI2}. 

\subsection{Models} 
Let $(\widetilde M, \tilde o)$ be a complete simply connected two--dimensional Riemannian manifold which is rotationally symmetric  about the vertex $\tilde o$.  Then, in geodesic polar coordinates $(r,\theta) \in (0,\ell)\times [0,2\pi]$  around $\tilde o$, the metric takes the form
$$ds^2 = dr^2 + y(r)^2 d\theta^2$$
for some $\ell \in (0,\infty]$.  
When  $\ell <\infty$, $\widetilde M$ is compact and diffeomorphic to the 2--sphere, while if  $\ell =\infty$, $\widetilde M$ is noncompact and diffeomorphic to the Euclidean plane. Since it is assumed that the metric is smooth, $y(r)$ extends to an odd $C^\infty$ function on the real line, satisfying $y(0)=0$, $y^\prime(0)=1$, and $y(r) >0$ on $(0,\ell)$.  Moreover, if $ \ell<\infty$, then
$y(\ell)=0$, $y^\prime(\ell)=-1$, and $y(r)$ is an odd function about $\ell$.  Any such $(\widetilde M,\tilde o)$ will be called a (pointed) model surface. The notation  
$\widetilde M^+$  denotes the set of points in $\widetilde M$ whose polar angle satisfies $0\leq \theta \leq \pi$.

The corresponding model spaces of dimension $n$ will be denoted $(\widetilde M^n, \tilde o^n)$. 
In geodesic spherical coordinates $(r,\theta) \in (0,\ell)\times S^{n-1}$, the metric takes the form
$$ds^2 = dr^2 + y(r)^2d\theta_{n-1}^2$$
where $d\theta_{n-1}^2$ is the standard metric on the unit $(n-1)$--dimensional sphere and $y(r)$ is the same function as in the model surface. Clearly, this metric is rotationally symmetric about $\tilde o^n$ under the action of the orthogonal group $O(n)$.

We need two functions that are associated to the model surface $(\widetilde M,\tilde o)$.

 \begin{definition} 
 For $0 \leq \theta \leq \pi$ and $ 0 \leq \phi \leq \pi$, define the functions
 \begin{eqnarray*}
&D_\theta: (0, \ell)\times [0,\ell) \rightarrow \mathbb{R}^+ \cr
&R_\phi : (0,\ell)\times [0,\infty) \rightarrow \mathbb{R}^+ 
\end{eqnarray*}
as follows:  $D_\theta(r_1,r_2) = L_{\tilde p}( \tilde q)$ where $\tilde p$ has coordinates $(r_1,0)$ and $\tilde q \in \widetilde M^+$ has coordinates $(r_2,\theta)$.  $R_\phi(r_1, t) = L_{\tilde o}( \sigma(t))$
where  $\sigma$ is the unit speed geodesic starting at $\tilde p =  (r_1,0)$ making an angle $\phi$ with the meridian $\theta = 0$. 
 \end{definition}

The next proposition is proved in \cite[Proposition 3.2]{HI1}. See also \cite[Lemma~2.1]{IMS}, \cite[Lemma 7.3.2]{SST} and \cite[Lemma 5.1]{KT} for similar results.
 
 \begin{proposition} \label{p:monotonicity} 
  $D_\theta$ and $R_\phi$ have the following monotonicity properties. 
 \begin{enumerate}
\item  For fixed $(r_1, r_2) \in (0,\ell)\times(0,\ell)$, $D_\theta(r_1,r_2)$ is strictly increasing for $0 \leq \theta \leq \pi$.  
 \item  For fixed $(r_1,t) \in (0,\ell)\times (0,\infty)$, if $t$ is less than the injectivity radius of $\tilde p$, then $R_\phi(r_1,t)$ is strictly decreasing for $ 0 \leq \phi \leq \pi$.
 \end{enumerate}
 \end{proposition}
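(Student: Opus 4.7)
The plan is to prove both monotonicity statements by a first variation argument driven by the Clairaut integral. In a rotationally symmetric surface any unit speed geodesic $s\mapsto(r(s),\theta(s))$ satisfies $y(r(s))\sin\alpha(s)\equiv c$, where $\alpha(s)$ is the angle it makes with $\partial_r$; the sign of this conserved quantity is what will force monotonicity.

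For part (2), the hypothesis $t<\operatorname{inj}(\tilde p)$ guarantees that $\sigma_\phi(t)$ is neither the pole nor a cut point of $\tilde p$, and that the conjugate radius along $\sigma_\phi$ exceeds $t$. The variation field $J_\phi:=\partial\sigma_\phi/\partial\phi$ is a Jacobi field with $J_\phi(0)=0$ and $J_\phi'(0)$ a unit vector perpendicular to $\sigma_\phi'(0)$, so on a surface it has the form $J_\phi(s)=f(s)N(s)$ with $f>0$ on $(0,t]$ and $N$ the unit normal rotated $\pi/2$ from $\sigma_\phi'$ into $\widetilde M^+$. Expanding $\sigma_\phi'(t)$ and $N(t)$ in the orthonormal frame $\{\partial_r,(1/y(r(t)))\partial_\theta\}$ and applying the first variation of $L_{\tilde o}$,
$$\frac{\partial R_\phi}{\partial\phi}(r_1,t)=\langle\partial_r,J_\phi(t)\rangle=-f(t)\sin\alpha(t).$$
Clairaut with initial data at $\tilde p$ gives $\sin\alpha(t)=y(r_1)\sin\phi/y(r(t))>0$ for $\phi\in(0,\pi)$, and integration delivers the strict decrease on $[0,\pi]$.

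Part (1) runs analogously with $\tilde p$ fixed and the endpoint $\tilde q_\theta=(r_2,\theta)$ varied by $\partial_\theta$. Away from the cut locus of $\tilde p$, the first variation formula gives
$$\frac{\partial D_\theta}{\partial\theta}(r_1,r_2)=y(r_2)\sin\alpha,$$
where $\alpha$ is the angle that the terminal tangent of a minimizing geodesic $\gamma_\theta$ from $\tilde p$ to $\tilde q_\theta$ makes with $\partial_r$. Choosing $\gamma_\theta$ to enter $\widetilde M^+$ at $\tilde p$ (possible for each $\theta\in(0,\pi]$ by the reflection symmetry of $\widetilde M$ about the meridian $\theta=0$), the initial angle $\alpha(0)\in(0,\pi)$, so Clairaut yields $y(r_2)\sin\alpha=y(r_1)\sin\alpha(0)>0$. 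Since $\theta\mapsto D_\theta$ is Lipschitz, hence absolutely continuous, integrating the a.e.\ positive derivative produces the strict monotonicity.

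The main obstacle is the cut locus in part (1): one must confirm both that a minimizing geodesic entering $\widetilde M^+$ can be selected for every relevant $\theta$, and that the set of $\theta$ where $\tilde q_\theta$ is a cut point of $\tilde p$ is negligible, so that the a.e.\ positive derivative integrates to a strictly positive increment. Both are standard for smooth surfaces and follow from the reflection symmetry together with the structure of cut loci in dimension two.
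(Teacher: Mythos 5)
Your argument is correct and follows the standard first-variation-plus-Clairaut method for surfaces of revolution. Note that the paper does not prove Proposition \ref{p:monotonicity} itself but cites \cite[Proposition 3.2]{HI1} (with similar results in \cite{IMS}, \cite{SST}, \cite{KT}), and those proofs use essentially the same technique you use: compute the $\phi$- or $\theta$-derivative of the distance via first variation, identify it (up to a positive Jacobi-field factor or the factor $y(r_2)$) with the Clairaut constant of the relevant geodesic, and observe that this constant is strictly positive for interior angles once the minimizing geodesic is taken to lie in $\widetilde M^+$. The two points you flag at the end---selecting a minimizer that enters $\widetilde M^+$ (reflect across the bounding meridians and use that a length-minimizer must be a smooth geodesic) and integrating through the measure-zero set of $\theta$ where $\tilde q_\theta\in C(\tilde p)$ (or, alternatively, noting via Lemma \ref{l:derivative} that both one-sided derivatives of $D_\theta$ are positive there)---are indeed the only places requiring care, and both go through.
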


\section{Stronger Radial Attraction}
 
 Let $M$ be a complete  Riemannian manifold and $o$ a point in $M$.  Let $(\widetilde M, \tilde o)$ be a model surface.
 
 \begin{definition}\label{d:SRA}
 The model $(\widetilde M,\tilde o)$ has \emph{stronger radial attraction} than the pointed Riemannian manifold $(M,o)$ if whenever $\sigma$ and $\tilde \sigma$ are unit speed geodesics in $M$ and $\widetilde M$ respectively such that $L_o\circ \sigma(0) = L_{\tilde o}\circ \tilde \sigma(0)<\ell$ and $(L_o\circ \sigma)^\prime_+(0) = (L_{\tilde o}\circ \tilde \sigma)^\prime_+(0)$, there exists an $\epsilon >0$ such that $L_o\circ \sigma(t) \geq L_{\tilde o}\circ \tilde\sigma(t)$ for all $0 \leq t < \epsilon$.
If this is the case,  we may also say $(M,o)$ has \emph{weaker radial attraction} than $(\widetilde M, \tilde o)$. (cf. \cite[Definition 4.1]{HI1} and \cite[Definition 3.1]{HI2}.)
\end{definition}

That $L_o\circ\sigma$ has both a left and right hand derivative at every point is a consequence of Lemma \ref{l:derivative} stated below  which is proved in \cite[Lemma 2.1]{HI1}.

\begin{lemma}\label{l:derivative}
Let $M$ be a complete Riemannian manifold with Riemannian metric $g(-,-)= \langle-,-\rangle$. Suppose that $c : (a,b) \rightarrow M$ is a $C^\infty$ curve parameterized by arclength, and that $p\in M$.   Then for each $s \in (a,b) $ the left and right hand derivatives of $L_p\circ c$ exist and are given by
\begin{eqnarray*} 
(L_p\circ c)_+^\prime(s) &=& \min \left\{ \langle c^\prime(s), \gamma^\prime(l) \rangle :  \gamma \in Geod(p,c(s))\right\} \\
(L_p\circ c)_-^\prime(s) &=& \max \left\{ \langle c^\prime(s), \gamma^\prime(l) \rangle :  \gamma \in Geod(p,c(s))\right\} 
\end{eqnarray*}
where $l = dist(p, c(s))$ and $Geod(p,c(s))$ is the set of all minimizing geodesics joining $p$ to $c(s)$. 
\end{lemma}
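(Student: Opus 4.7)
The plan is to prove the formula for the right derivative; the formula for the left derivative follows from the same argument applied to the reverse parametrization of $c$. Write $q = c(s_0)$ and $l = d(p, q)$, and let $\mathcal{G} = Geod(p, q)$, which is compact as a subset of the unit tangent sphere at $p$ once each minimizing geodesic is identified with its initial velocity.

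Step 1 (upper bound for $(L_p \circ c)'_+(s_0)$). For each $\gamma \in \mathcal{G}$, I construct a smooth variation $V \colon [0, l] \times (-\delta, \delta) \to M$ of $\gamma$ with $V(\cdot, 0) = \gamma$, $V(0, u) = p$, and $V(l, u) = c(s_0 + u)$. Since each curve $V(\cdot, u)$ joins $p$ to $c(s_0+u)$, its length bounds $L_p \circ c(s_0 + u)$ from above. The first variation formula applied to the geodesic $\gamma$ yields
$$\frac{d}{du}\Big|_{u=0} L(V(\cdot, u)) = \langle c'(s_0), \gamma'(l) \rangle,$$
since the boundary term at $p$ vanishes. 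Dividing by $u > 0$ and letting $u \to 0^+$ gives $\limsup_{u \to 0^+} (L_p\circ c(s_0+u) - l)/u \leq \langle c'(s_0), \gamma'(l) \rangle$. Taking the infimum over $\gamma \in \mathcal{G}$, which is attained because the pairing is continuous on a compact set, yields the desired upper bound.

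Step 2 (lower bound). Pick a sequence $u_n \downarrow 0$ realizing $\liminf_{u \to 0^+} (L_p\circ c(s_0+u) - l)/u$ and, for each $n$, a minimizing geodesic $\gamma_n$ from $p$ to $c(s_0 + u_n)$ of length $l_n = L_p\circ c(s_0 + u_n)$. The unit initial vectors $\gamma_n'(0)$ admit a convergent subsequence on the unit sphere in $T_p M$; continuity of the exponential map together with $l_n \to l$ identifies the resulting limit geodesic $\gamma_\infty$ as an element of $\mathcal{G}$, with $\gamma_n'(l_n) \to \gamma_\infty'(l)$. Now build a smooth variation of each $\gamma_n$ whose moving endpoint is $c(s_0 + u_n + w)$ for $w$ in an interval containing $-u_n$; evaluating at $w = -u_n$ produces a curve from $p$ to $c(s_0)$ whose length is at least $l$, so the first variation formula gives
$$l \leq l_n - u_n \langle c'(s_0 + u_n), \gamma_n'(l_n) \rangle + C_n u_n^2,$$
where $C_n$ is the second-order error constant of the $n$-th variation. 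Rearranging, dividing by $u_n$, and passing to the limit along the convergent subsequence yields $\liminf_n (l_n - l)/u_n \geq \langle c'(s_0), \gamma_\infty'(l) \rangle \geq \min_{\gamma \in \mathcal{G}} \langle c'(s_0), \gamma'(l) \rangle$, which matches the upper bound from Step 1 and shows that the right derivative exists and equals this minimum.

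The principal technical difficulty lies in Step 2: one must ensure that $C_n u_n = o(1)$, that is, the second-order error terms do not spoil the limit. This is handled by constructing the variations canonically, for example by transplanting a single smooth extension of $c'$ along $c$ back along each $\gamma_n$ via parallel transport, so that the $V_n$ converge smoothly to a limit variation of $\gamma_\infty$ and the Jacobi-field bounds controlling $C_n$ are uniform in $n$ on a neighborhood of $\gamma_\infty$. With that uniform control in place, the rest of the argument is routine.
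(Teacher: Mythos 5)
Your argument is correct and is the standard first--variation proof: the upper bound comes from varying a fixed minimizing geodesic over the moving endpoint curve, the lower bound from a subsequential limit of minimizing geodesics to the moving endpoint together with a mean--value/second--order error estimate, and the left derivative follows by reversing the parametrization. The paper itself does not reprove this lemma but cites \cite[Lemma 2.1]{HI1}, whose argument is of the same kind, so your approach matches. One small imprecision in Step 2 is worth fixing: transplanting a parallel extension of $c'$ prescribes only the \emph{infinitesimal} variation field and does not by itself force $V_n(l_n,w)=c(s_0+u_n+w)$, which is exactly what you use when you evaluate at $w=-u_n$; a canonical choice that does pin the endpoint is
\begin{equation*}
V_n(t,w)=\exp_{\gamma_n(t)}\Bigl(\tfrac{t}{\,l_n\,}\,P_n(t)\,\exp_{c(s_0+u_n)}^{-1}\bigl(c(s_0+u_n+w)\bigr)\Bigr),
\end{equation*}
where $P_n(t)$ denotes parallel transport along $\gamma_n$ from $c(s_0+u_n)$ to $\gamma_n(t)$. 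Since the $\gamma_n$ converge in $C^{\infty}$ to $\gamma_\infty$ inside a fixed compact neighborhood, the functions $w\mapsto L(V_n(\cdot,w))$ converge in $C^2$ near $w=0$, which gives the uniform bound on $C_n$ (or, equivalently, lets you apply the mean value theorem to $\phi_n(w)=L(V_n(\cdot,w))$ and pass to the limit in $\phi_n'(\xi_n)$ with $\xi_n\to 0$), completing Step 2 rigorously.
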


\begin{remark}\label{r:<1}
By Lemma \ref{l:derivative}, $-1 \leq (L_o\circ \sigma)^\prime_+(0)\leq 1$.  If $(L_o\circ \sigma)^\prime_+(0)=\pm 1$, then
$\sigma$ would lie along a minimizing geodesic joining $o$ to $\sigma(0)$, and we would have  $L_o\circ\sigma(t) = \pm t +L_o\circ\sigma(0)$ respectively for all small enough $t$. Thus in Definition \ref{d:SRA} we would also have $L_{\tilde o}\circ\tilde\sigma(t) = \pm t +L_o\circ\sigma(0) = L_o\circ\sigma(t)$ for all small enough $t$.  This shows that to check for stronger radial attraction, one only needs to consider cases with $-1 < (L_o\circ \sigma)^\prime_+(0) < 1$.
\end{remark}

\begin{remark}\label{r:2sided}
For the triangle comparison theorems proved below in Section \ref{s:tct}  it is important to know when $L_o\circ\sigma$ has a two--sided derivative.  According to Lemma \ref{l:derivative},  $L_o\circ\sigma$ has a two--sided derivative at $s$, when either
there is only one minimizing geodesic joining $o$ to $\sigma(s)$, in particular when $\sigma(s) \notin C(o)$, or if there is more than one geodesic in $Geod(o,\sigma(s))$ which all make the same angle with the tangent vector $\sigma^\prime(s)$.
\end{remark}
 
\subsection{The Tensor Field $S$}\label{s:TFS}
 As in \cite{HI2} we define a symmetric tensor field $S$ of type $(2,0)$ on $(M\backslash( \{o\} \cup C(o)))  \cap \{p : L_o(p) <\ell\}$ by
$$  S = \frac{y^\prime\circ L_o}{y\circ L_o} \left(g - dL_o\otimes dL_o\right) - \nabla^2L_o$$
where $g(-,-) = \langle-,-\rangle$ denotes the Riemannian metric on $M$. Because by \cite[Proposition 2.20]{GW} the Hessian of $L_{\tilde o}$ satisfies
$$ \nabla^2L_{\tilde o} = \frac {y^\prime(r)}{y(r)}\left( ds^2-dr^2\right),$$
we see that in a certain sense,  $S$ measures the difference between the Hessians of $L_{\tilde o}$ and $L_o$. 
For vector fields $X$ and $Y$ on $(M\backslash( \{o\} \cup C(o)))  \cap \{p : L_o(p) <\ell\}$, we have
$$S(X,Y) =  \frac{y^\prime\circ L_o}{y\circ L_o}( \langle X,Y\rangle - \langle X,\xi\rangle\langle Y,\xi\rangle) - \langle \nabla_X\xi, Y\rangle , $$
where $\xi = grad(L_o)$ is the unit radial vector field on $M \backslash (\{o\} \cup C(o))$ and because $\nabla^2L_o(X,Y)= \langle \nabla_X\xi, Y\rangle$.

\begin{lemma} \label{l:S} 
If $(\widetilde M, \tilde o)$ has stronger radial attraction than $(M,o)$ then
$$S(X,X) \leq 0$$
for all vector fields $X$ on  $(M\backslash (\{o\} \cup C(o)))  \cap \{p : L_o(p) <\ell\}$.
\end{lemma}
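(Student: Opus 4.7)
The plan is to reduce $S(X,X) \le 0$ to a second-order comparison along a pair of matched unit-speed geodesics, one in $M$ and one in $\widetilde M$. Since $S$ is quadratic in $X$, it suffices to handle the case $|X|=1$.

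Fix a point $p$ in the open set on which $S$ is defined and a unit vector $X \in T_pM$. Let $\sigma$ be the unit-speed geodesic in $M$ with $\sigma(0)=p$ and $\sigma'(0)=X$. Since $p \notin C(o)$, Lemma \ref{l:derivative} gives $(L_o\circ\sigma)'_+(0) = \langle X, \xi\rangle$, where $\xi = \mathrm{grad}\,L_o$. On the model side, I would pick any $\tilde p \in \widetilde M$ with $L_{\tilde o}(\tilde p) = L_o(p)$ and a unit vector $\tilde X \in T_{\tilde p}\widetilde M$ with $\langle \tilde X, \tilde\xi\rangle = \langle X, \xi\rangle$, which is possible because the radial inner product takes every value in $[-1,1]$ on the unit circle. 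Letting $\tilde\sigma$ be the corresponding unit-speed model geodesic, all the initial-data hypotheses of Definition \ref{d:SRA} hold, so stronger radial attraction yields
\[
L_o\circ\sigma(t) \ge L_{\tilde o}\circ\tilde\sigma(t) \qquad \text{for } 0 \le t < \epsilon.
\]

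Next I would Taylor-expand both sides at $t=0$ to second order. Because $L_o$ is smooth at $p$, the function $L_o\circ\sigma$ is $C^\infty$ near $0$ with $(L_o\circ\sigma)''(0) = \nabla^2 L_o(X,X)$. For small $t$, $\tilde\sigma(t)$ remains in a neighborhood of $\tilde p$ that avoids both $\tilde o$ and $C(\tilde o)$, so $L_{\tilde o}\circ\tilde\sigma$ is smooth near $0$, and the model Hessian formula $\nabla^2 L_{\tilde o} = (y'/y)(ds^2 - dr^2)$ gives $(L_{\tilde o}\circ\tilde\sigma)''(0) = \frac{y'(L_o(p))}{y(L_o(p))}\bigl(1 - \langle X, \xi\rangle^2\bigr)$. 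The displayed inequality, together with matching values and matching first derivatives at $0$, forces
\[
\nabla^2 L_o(X,X) \ge \frac{y'(L_o(p))}{y(L_o(p))}\bigl(1 - \langle X, \xi\rangle^2\bigr),
\]
which rearranges precisely to $S(X,X) \le 0$ for unit $X$; bilinearity of $S$ then extends the inequality to every $X$.

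The only delicate point is the boundary case $\langle X, \xi\rangle = \pm 1$, i.e.\ $X = \pm \xi$, in which $\sigma$ is tangent to the radial direction; by Remark \ref{r:<1} the two distance functions agree identically for small $t$, both sides of the desired inequality vanish, and the conclusion is automatic. I do not anticipate a serious obstacle here: the argument rests entirely on the smoothness of $L_o$ off the cut locus, the matching of initial data in the rotationally symmetric model, and the explicit formula for the model Hessian recalled in Section \ref{s:TFS}.
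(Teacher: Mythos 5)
Your proof is correct and follows essentially the same route as the paper: match the initial data of a pair of geodesics, use stronger radial attraction to get a one-sided inequality near $t=0$, and then compare second derivatives at $t=0$ to conclude $S(X,X)\leq 0$. The only cosmetic difference is that the paper packages the second-order comparison by citing the identity $S(X,X)=f''(0)$ from \cite[(2.11)]{HI1} for $f=L_{\tilde o}\circ\tilde\sigma-L_o\circ\sigma$, while you unwind this identity by computing both Hessians explicitly via $(L_o\circ\sigma)''(0)=\nabla^2L_o(X,X)$ and the model formula $\nabla^2L_{\tilde o}=\frac{y'}{y}(ds^2-dr^2)$.
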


\begin{proof}
Let $ L_o(p) <\ell$  and $X \in T_pM$ be a unit tangent vector. Let $\sigma$ be the geodesic with $\sigma^\prime(0) =X$. Let $\tilde \sigma$ be a geodesic in
$\widetilde M$ satisfying $L_{\tilde o}( \tilde\sigma(0) ) = L_o(\sigma(0))$ and $(L_{\tilde o}\circ \tilde\sigma)_+^\prime(0) = (L_o\circ\sigma)_+^\prime(0)$.
Consider the smooth function  $f(t) = L_{\tilde o}\circ \tilde\sigma(t) - L_o\circ\sigma(t)$. Because $(\widetilde M, \tilde o)$ has stronger radial attraction than $(M,o)$, there exists an $\epsilon > 0$ such that $f(t)\leq 0$ for $0\leq t < \epsilon$.  Since $f(0)=f^\prime(0)=0$ it follows that $f^{\prime\prime}(0) \leq 0$.  But by \cite[(2.11)]{HI1},
$S(X,X) = f^{\prime\prime}(0)$.
\end{proof}

\begin{remark}
In \cite{HI1} we showed that the condition of weaker radial attraction is equivalent to the condition $S(X,X) \geq 0$ for all $X$. 
In Section \ref{s:curvature} we show that the converse of Lemma \ref{l:S} holds when $M$ is a real analytic Riemannian manifold.
It remains an open question whether the converse of Lemma \ref{l:S} holds in the $C^\infty$ case.
\end{remark}

\subsection{Conjugate cut points}

\begin{proposition}\label{p:ccp}
Suppose that $(\widetilde M, \tilde o)$ has stronger radial attraction than $(M,o)$.  If $q \in C(o)$ is a conjugate cut point of $o$, then $d(o,q) \geq \ell$. Thus $C(o) \cap \{q: d(o,q)<\ell\}$ consists entirely of nonconjugate cut points or is empty.
\end{proposition}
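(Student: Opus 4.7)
The plan is to argue by contradiction. Suppose $q \in C(o)$ is a conjugate cut point with $r_0 := d(o,q) < \ell$, and let $\gamma : [0,r_0] \to M$ be a unit-speed minimizing geodesic from $o$ to $q$ along which $q$ is conjugate to $o$. Since $\gamma\bigl|_{[0,t]}$ is the unique shortest path for $t<r_0$ (as $r_0$ is the first cut value along $\gamma$), every interior point $\gamma(t)$ lies outside $C(o)$, and $L_o(\gamma(t)) = t < \ell$. Thus every such $\gamma(t)$ lies in the open set on which the tensor field $S$ is defined.

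The first step is to apply Lemma \ref{l:S} at $\gamma(t)$ to a tangent vector $X$ perpendicular to the radial field $\xi = \gamma'(t)$. Since $\langle X,\xi\rangle = 0$, the inequality $S(X,X)\leq 0$ reduces to the Hessian lower bound
$$\nabla^2 L_o(X,X) \;\geq\; \frac{y'(t)}{y(t)}\,|X|^2, \qquad 0<t<r_0, \quad X\perp\gamma'(t).$$
Because $y > 0$ on $(0,\ell)$, the quotient $y'/y$ is continuous on $(0,\ell)$ and in particular bounded on any compact subinterval of $(0,r_0]$.

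The second step uses the hypothesis that $q$ is conjugate to $o$ along $\gamma$: there is a nontrivial Jacobi field $J$ along $\gamma$, perpendicular to $\gamma'$, with $J(0)=0$ and $J(r_0)=0$. I would realize $J$ as the variational field of a geodesic variation by geodesics emanating from $o$; at each noncut point $\gamma(t)$ ($t<r_0$) this standard identification gives $\nabla_{\gamma'}J = \nabla_{J}\xi$, so
$$\nabla^2 L_o\bigl(J(t),J(t)\bigr) \;=\; \langle \nabla_{J(t)}\xi,J(t)\rangle \;=\; \langle J'(t),J(t)\rangle.$$
Combined with the bound from the first step, we obtain $\langle J'(t),J(t)\rangle \geq (y'(t)/y(t))\,|J(t)|^2$ for all $t\in (0,r_0)$.

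The final step is to derive a contradiction by examining the limit $t\to r_0^-$. Since $J(r_0)=0$ but $J$ is nontrivial, $J'(r_0)\neq 0$, and a Taylor expansion $J(t) = -(r_0-t)\,J'(r_0) + O\bigl((r_0-t)^2\bigr)$ gives
$$\frac{\langle J'(t),J(t)\rangle}{|J(t)|^2} \;=\; -\frac{1}{r_0-t} + O(1) \;\longrightarrow\; -\infty,$$
while the right-hand side $y'(t)/y(t)$ remains bounded on a neighborhood of $r_0$ in $(0,\ell)$. This is the desired contradiction, forcing $d(o,q)\geq\ell$; the second assertion follows immediately. The main point requiring care is the identification $J' = \nabla_J\xi$ and the blow-up asymptotic, both of which are standard consequences of Jacobi field theory at noncut points but should be invoked cleanly to link the pointwise bound from Lemma \ref{l:S} with the existence of a conjugate point.
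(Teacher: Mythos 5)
Your proof is correct, and it takes a genuinely different route than the paper's. Both start the same way: reduce to the case $r_0 = d(o,q)<\ell$, note that $\gamma(t)\notin C(o)$ for $0<t<r_0$, and apply Lemma \ref{l:S} to obtain the Hessian bound $\nabla^2 L_o \geq (y'/y)(g - dL_o\otimes dL_o)$ on the interior of $\gamma$. At that point the paper introduces a model Jacobi field $\tilde J$ with $|\tilde J(t)| = y(t)$ and invokes the Jacobi-field comparison result \cite[Theorem~3.13]{HI2} to conclude $y(t)\leq |J(t)|$, getting the contradiction $0 < y(r_0) \leq |J(r_0)| = 0$. You instead convert the Hessian bound into a pointwise Riccati-type inequality $\langle J'(t),J(t)\rangle / |J(t)|^2 \geq y'(t)/y(t)$ using the standard identity $\nabla_{J}\xi = J'$ for Jacobi fields vanishing at $o$, and then produce the contradiction by a direct blow-up asymptotic as $t\to r_0^-$: since $J(r_0)=0$ and $J'(r_0)\neq 0$, the left side diverges to $-\infty$ while $y'/y$ stays bounded because $r_0<\ell$. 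The tradeoff is clear: the paper's route delegates all the analysis to an already-established comparison theorem, so the proof in situ is shorter and stays within the framework the paper has built; your route is self-contained and does not depend on importing \cite[Theorem~3.13]{HI2}, at the cost of carrying out the Jacobi/Riccati asymptotics explicitly. Both are valid; just make sure, if you write this up, to clearly justify the identification $\nabla_{\gamma'}J = \nabla_J\xi$ (it uses that $J$ is the variation field of geodesics issuing from $o$ and that $\gamma(t)$ is a regular point of $L_o$ for $t<r_0$).
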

\begin{proof}
If not, suppose that $q$ is conjugate to $o$ along the unit speed geodesic $\gamma : [0,a] \rightarrow M $ joining $o$ to $q$ with $ a<\ell$.  There exists a nontrivial Jacobi field $J$ along $\gamma$ with $J(0) =0$, $J(a) = 0$, and $J^\prime(0)$ a unit vector perpendicular to $\gamma^\prime(0)$.  Let $ \tilde \gamma : [0,a] \rightarrow \widetilde M$ be a geodesic emanating from $\tilde o$. Let $\tilde J$ be the Jacobi field along $\tilde\gamma$ satisfying $\tilde J(0) =0$ and $\tilde J^\prime(0)$ a unit vector perpendicular to $\tilde\gamma^\prime(0)$.  Thus $ |\tilde J(t)| = y(t)$ for $0\leq t <\ell$. By assumption,  $(\widetilde M, \tilde o)$ has stronger radial attraction than $(M,o)$. Thus, by Lemma \ref{l:S}, the Hessians satisfy $\nabla^2 L_{\tilde o} \leq \nabla^2 L_o$ along $\gamma$ and $\tilde\gamma$ for all $0< t <a $. By \cite[Theorem 3.13]{HI2}, it follows that $y(t) = |\tilde J(t)| \leq |J(t)|$ for all $0\leq t \leq a$.  This is a contradiction since $0 < y(a) \leq |J(a)| =0$.
 \end{proof}
 
 \begin{remark}
Proposition \ref{p:ccp} implies that if $\ell = \infty$, then $C(o)$ contains only  nonconjugate cut points.
 \end{remark}
 
\section{Triangle comparison}\label{s:tct}

\subsection{Triangles} \label{s:tri}
We are concerned with geodesic triangles $\triangle opq$ in the pointed Riemannian manifold $(M, o)$,
 where $o$ is the base point of $M$ and $p \neq q$ are two other points of $M$ distinct from $o$.  The side $\overline {pq}$ of $\triangle opq$ joining $p$ to $q$ is a unit speed minimizing geodesic  $\sigma: [0,d(p,q)] \rightarrow M$.  Similarly the sides $\overline{op}$ and $\overline{oq}$ are  unit speed minimizing geodesics $\tau:[0,d(o,p)] \rightarrow M$ and $\gamma:[0,d(o,q)] \rightarrow M$ respectively.  (See Figure \ref{f:opq}.) There may be ambiguity in the notation $\triangle opq$ when $p$ or $q$ is a cut point of $o$, or when $q$ is a cut point of $p$, since there may be more than one minimizing geodesic joining the pair of vertices in any of these cases.  However, when we use the notation $\triangle opq$ we will implicitly assume that appropriate choices of the sides have been made so that the ambiguity in the notation should cause no problems. Indeed, we implicitly assume that $\tau$ has been chosen so that $\langle \tau^\prime(d(0,p)), \sigma^\prime(0)\rangle = (L_o\circ \sigma)_+^\prime(0)$ and that $\gamma$ has been chosen so that 
 $\langle \gamma^\prime(d(0,q)), \sigma^\prime(d(p,q))\rangle = (L_o\circ \sigma)_-^\prime(d(p,q))$ in accordance with Lemma \ref{l:derivative}.
 
 \begin{figure}[htbp]
\begin{center}
\setlength{\unitlength}{1cm}
\begin{picture}(5,5)
\put(1,0){\line(0,1){4}}\put(0.7,2){$\tau$}
\put(1,0){\line(3,1){3}}\put(2.5,0.2){$\gamma$}
\put(1,4){\line(1,-1){3}}\put(2.5,2.6){$\sigma$}
\put(0.7,-0.1){$o$}
\put(0.7,4){$p$}
\put(4.1,1){$q$}
\end{picture}
\caption{$\triangle opq$.}
\label{f:opq}
\end{center}
\end{figure}

 A triangle $\triangle \tilde o\tilde p\tilde q$ in $(\widetilde M,\tilde o)$ corresponds to $\triangle opq$ if the corresponding sides have the same lengths, that is, if 
 \begin{equation}\label{e:correspond}
 d(\tilde o,\tilde p) =d(o,p), \quad d(\tilde o,\tilde q) = d(o,q),\quad \mathrm{and}\quad d(\tilde p,\tilde q) = d(p,q).
 \end{equation}
 Because of the constraints on the distance function in $\widetilde M$, the corresponding triangle in $\widetilde M$ to a given triangle $\triangle opq$ exists  if and only if 
 \begin{equation}\label{e:exist}
d(o,p) <\ell, \quad d(o,q)<\ell,\quad \mathrm{and}\quad d(p,q)\leq D_\pi(L_o(p),L_o(q)).
\end{equation}
In particular, we always assume (\ref{e:correspond}) and (\ref{e:exist}) hold when we say $\triangle \tilde o\tilde p\tilde q$ corresponds to $\triangle opq$.  Because of (\ref{e:exist}), $ \tilde p, \tilde q \notin C(\tilde o)$. Hence there is no ambiguity in the choice of the minimizing geodesics $\tilde \tau$ joining $\tilde o$ to $\tilde p$ and  $\tilde \gamma$ joining $\tilde o$ to $\tilde q$. However, if $\tilde q \in C(\tilde p)$, there may be more than one choice of a minimizing geodesic joining $\tilde p$ to $\tilde q$, and we always choose $\tilde \sigma$ to be the uppermost one to avoid any ambiguity. (See \cite[Section 2]{HI2}.) In geodesic polar coordinates 
$(r,\theta)$ we always assume that $\tilde p$ lies on the $\theta =0$ meridian and that  $\tilde q$ lies on the meridian with $0\leq \theta\leq \pi$.  

\subsection{Fundamental Lemma} The definition of stronger radial attraction is formulated to give a short proof of the following lemma.

\begin{lemma} \label{l:basic} 
Suppose $(\widetilde M, \tilde o)$ has stronger radial attraction than $(M,o)$ and that $\tilde\sigma:[a,b]\rightarrow \widetilde M$ and $\sigma
:[a,b]\rightarrow M$ are unit speed geodesics.   Assume $\ell > L_{\tilde o}(\tilde\sigma(t)) \geq L_o(\sigma(t))$ for all $a \leq t \leq b$. Moreover, assume that $L_o\circ\sigma$ has a 2--sided derivative for all $a < t < b$. If there exists a $t_0 \in (a,b)$ where $L_{\tilde o}(\tilde\sigma(t_0)) = L_o(\sigma(t_0))$, then 
$L_{\tilde o}(\tilde\sigma(t)) = L_o(\sigma(t))$ for all $t \in [a,b]$.
\end{lemma}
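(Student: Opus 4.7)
\medskip

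\textbf{Proof plan.} My strategy is a standard connectedness argument: I set
\[
f(t) = L_{\tilde o}(\tilde\sigma(t)) - L_o(\sigma(t)), \qquad E = \{t \in [a,b] : f(t) = 0\},
\]
and show that $E$ is nonempty (given), closed in $[a,b]$ (by continuity), and open at every interior point. Then $E \cap (a,b)$ is clopen in the connected set $(a,b)$ and contains $t_0$, so $E \supseteq (a,b)$, and continuity gives $E = [a,b]$.

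The nontrivial step is openness. Fix $t^* \in E \cap (a,b)$, and set $r = L_o(\sigma(t^*)) = L_{\tilde o}(\tilde\sigma(t^*))$. Because $f \geq 0$ on $[a,b]$ and $f(t^*) = 0$, the point $t^*$ is a minimum of $f$. I first argue that $r > 0$: otherwise $\sigma(t^*) = o$, and then near $t^*$ the unit-speed geodesic $\sigma$ gives $L_o\circ\sigma(t) = |t - t^*|$, so the one-sided derivatives of $L_o\circ\sigma$ at $t^*$ would be $\mp 1$, contradicting the 2-sided derivative hypothesis at $t^* \in (a,b)$. Consequently $\tilde\sigma(t^*)$ is also at distance $r \in (0,\ell)$ from $\tilde o$, so $\tilde\sigma(t^*)$ lies in the smooth locus of $L_{\tilde o}$ (which only fails at $\tilde o$ and, if $\ell<\infty$, at the antipode). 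Hence $L_{\tilde o}\circ\tilde\sigma$ is smooth at $t^*$, so $f$ has a 2-sided derivative there, and the minimum property forces
\[
(L_o\circ\sigma)^\prime(t^*) = (L_{\tilde o}\circ\tilde\sigma)^\prime(t^*).
\]

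Now I apply the definition of stronger radial attraction. Translating parameters so that $t^*$ becomes $0$, the pair of unit-speed geodesics $t\mapsto\sigma(t^*+t)$ and $t\mapsto\tilde\sigma(t^*+t)$ satisfies $L_o\circ\sigma(t^*)=L_{\tilde o}\circ\tilde\sigma(t^*) = r < \ell$ and matching right-hand derivatives at $0$. Stronger radial attraction gives an $\epsilon > 0$ with $L_o(\sigma(t^*+s)) \geq L_{\tilde o}(\tilde\sigma(t^*+s))$ for $0 \leq s < \epsilon$, i.e.\ $f(t^*+s) \leq 0$; combined with $f \geq 0$, this yields $f \equiv 0$ on $[t^*,t^*+\epsilon)$.

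The main (mild) obstacle is extending the equality to the \emph{left} of $t^*$, since stronger radial attraction is inherently a one-sided condition. The fix is to reverse time: consider the unit-speed geodesics $s \mapsto \sigma(t^*-s)$ and $s\mapsto \tilde\sigma(t^*-s)$. Their right-hand derivatives of $L_o$ and $L_{\tilde o}$ at $s=0$ are the negatives of the (two-sided) derivatives computed above, hence still agree. Applying stronger radial attraction to this reversed pair yields an $\epsilon' > 0$ with $f(t^* - s) \leq 0$ for $0 \leq s < \epsilon'$, which combined with $f \geq 0$ gives $f \equiv 0$ on $(t^*-\epsilon',t^*]$. Thus $E$ contains a neighborhood of $t^*$, completing the openness step and the proof.
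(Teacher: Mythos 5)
Your proof is correct and follows essentially the same route as the paper's: a connectedness argument showing the equality set is clopen, with closedness from continuity and openness from the observation that a touching point is a minimum of the difference (forcing matched derivatives) followed by applying stronger radial attraction to the right and, via time reversal, to the left. You spell out two small points the paper leaves implicit — that the 2-sided-derivative hypothesis rules out $\sigma(t^*)=o$, and the mechanics of the time reversal for the left-hand side — but the argument is the same.
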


\begin{proof}
One shows that the set $\{ t \in(a,b) : L_{\tilde o}(\tilde\sigma(t)) = L_o(\sigma(t))\}$ is both open and closed.  It is clearly closed by continuity. On the other hand, if  $L_{\tilde o}(\tilde\sigma(t_0)) = L_o(\sigma(t_0))$, then we also have that $(L_{\tilde o}\circ\tilde\sigma)^\prime(t_0) =( L_o\circ\sigma)^\prime(t_0)$ since the difference function $L_{\tilde o}\circ \tilde \sigma - L_o\circ \sigma$ attains a minimum at $t_0$.
Apply the stronger radial attraction hypothesis to the left and right of $t_0$ to see that $L_{\tilde o}(\tilde\sigma(t)) = L_o(\sigma(t))$ for $t$ in a neighborhood of $t_0$.
\end{proof}

\subsection{Thin Triangles}

\begin{definition}
A geodesic triangle $\triangle opq$ in $M$ is \emph{thin} if $L_o(p) + d(p,q) < \ell$, $d(p,q)$ is less than the injectivity radius $inj_{\widetilde M}(\tilde p)$ of $\tilde p$ in $\widetilde M$, and $L_o\circ\sigma$ has a two--sided derivative at all interior points of the side $\sigma$ joining $p$ to $q$. The point $\tilde p$ in $\widetilde M$ is chosen so that $L_{\tilde o}(\tilde p ) = L_o(p)$. 
\end{definition}

\begin{proposition}\label{p:thin}
Suppose that $(\widetilde M, \tilde o)$ has stronger radial attraction than $(M,o)$.  If $\triangle opq$ is thin, then the corresponding geodesic triangle
$\triangle \tilde o\tilde p\tilde q$ exists.  Moreover
if $\sigma : [0, d(p,q)] \rightarrow M$ and $\tilde\sigma : [0, d(p,q)] \rightarrow \widetilde M$ are the sides of $\triangle opq$ and $\triangle \tilde o\tilde p\tilde q$ joining $p$ to $q$ and $\tilde p$ to $\tilde q$ respectively, then
$$ L_o(\sigma(t)) \leq L_{\tilde o}(\tilde\sigma(t))$$
for  $0\leq t \leq d(p,q)$.
\end{proposition}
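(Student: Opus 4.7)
The plan divides into verifying existence of the comparison triangle and then proving the pointwise distance inequality.

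For existence, I would check the three conditions of (\ref{e:exist}). The bounds $d(o,p)<\ell$ and $d(o,q)<\ell$ follow from the triangle inequality together with the thinness hypothesis $L_o(p)+d(p,q)<\ell$. The remaining condition $d(p,q)\leq D_\pi(L_o(p),L_o(q))$ follows from $d(p,q)<inj_{\widetilde M}(\tilde p)$: the $\widetilde M$-distances from $\tilde p$ to points on the coordinate circle $\{L_{\tilde o}=L_o(q)\}$ realize the interval $[|L_o(p)-L_o(q)|, D_\pi(L_o(p),L_o(q))]$, and the triangle inequality places $d(p,q)$ in this range.

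For the distance inequality, let $\tilde\sigma_*$ be the unit speed geodesic in $\widetilde M$ starting at $\tilde p$ with $(L_{\tilde o}\circ\tilde\sigma_*)'(0)=(L_o\circ\sigma)_+'(0)$.  Stronger radial attraction gives $L_o(\sigma(t))\geq L_{\tilde o}(\tilde\sigma_*(t))$ for small $t>0$.  I would then extend this to the full interval $[0,d(p,q)]$ by a Lemma \ref{l:basic}-type continuation argument: if $T^+=\sup\{t : L_o(\sigma(s))\geq L_{\tilde o}(\tilde\sigma_*(s))\text{ on }[0,t]\}$ were strictly less than $d(p,q)$, then equality at $T^+$ combined with the 2-sided derivative hypothesis and Lemma \ref{l:derivative} would force matched one-sided derivatives there, after which stronger radial attraction (together with the reflection symmetry of the model) propagates the inequality past $T^+$, a contradiction.

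Evaluating at $t=d(p,q)$ yields $L_o(q)\geq L_{\tilde o}(\tilde\sigma_*(d(p,q)))$; combined with $L_o(q)=L_{\tilde o}(\tilde\sigma(d(p,q)))$ and Proposition \ref{p:monotonicity}(2) (applicable because $d(p,q)<inj_{\widetilde M}(\tilde p)$), the initial angle $\phi_0$ of $\tilde\sigma$ at $\tilde p$ satisfies $\phi_0\leq\phi_\sigma$, the angle of $\tilde\sigma_*$.

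The step I expect to be the main obstacle is converting this endpoint angular bound into the pointwise comparison claimed by the proposition. The two bounds produced so far---$L_o(\sigma)\geq L_{\tilde o}(\tilde\sigma_*)$ and $L_{\tilde o}(\tilde\sigma_*)\leq L_{\tilde o}(\tilde\sigma)$---do not chain in the required direction. I plan to close this gap by iterating the construction on the family of thin sub-triangles $\triangle op\sigma(s)$ for $s\in(0,d(p,q)]$ (each of which inherits thinness from $\triangle opq$): for each $s$, the preceding argument produces an angle $\phi(s)$ with $L_o(\sigma(s))=R_{\phi(s)}(L_o(p),s)$, and a further use of the 2-sided derivative hypothesis, now to propagate the comparison $\phi(s)\geq\phi_0$ uniformly in $s$, yields the desired inequality $L_o(\sigma(s))\leq L_{\tilde o}(\tilde\sigma(s))$ via Proposition \ref{p:monotonicity}(2).
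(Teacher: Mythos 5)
Your existence check is fine and mirrors the paper's, but the distance inequality is where the proposal breaks down.

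The continuation argument for $L_o\circ\sigma(t)\geq L_{\tilde o}\circ\tilde\sigma_*(t)$ does not close. Set $g(t)=L_o\circ\sigma(t)-L_{\tilde o}\circ\tilde\sigma_*(t)$ and $T^+=\sup\{t:g\geq 0\text{ on }[0,t]\}$. From $g\geq 0$ on $[0,T^+]$ and $g(T^+)=0$ you can only conclude $g^\prime(T^+)\leq 0$; the two-sided derivative hypothesis gives you that $g^\prime(T^+)$ exists, not that it vanishes. If $g^\prime(T^+)<0$, the function simply crosses zero going downward, and stronger radial attraction at $T^+$ (forward, or backward via reflecting the geodesics) is unavailable because the one-sided derivatives of $L_o\circ\sigma$ and $L_{\tilde o}\circ\tilde\sigma_*$ at $T^+$ do not match. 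This is exactly what distinguishes your situation from Lemma \ref{l:basic}: there the difference function is nonnegative on a full two-sided neighborhood, so the touching point is a local minimum and its derivative must vanish; your $T^+$ is merely the boundary of the set where the inequality is known.

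Even granting that bound, you correctly observe the inequalities run the wrong way, and the final ``iterate over sub-triangles'' paragraph does not supply the missing step. For each $s$ the first argument gives $R_{\phi(s)}(r_1,s)=L_o(\sigma(s))\geq R_{\phi_*}(r_1,s)$, i.e., $\phi(s)\leq\phi_*$; what the proposition amounts to is $\phi(s)\geq\phi_0$ for all $s$, and ``propagate $\phi(s)\geq\phi_0$ uniformly in $s$'' is precisely the content you are being asked to prove, not something that follows from the tools already assembled.

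The paper's mechanism is to sweep over the angle parameter $\phi$, not over $t$. The anchor is $\phi=0$: thinness gives $L_o(p)+d(p,q)<\ell$, hence $R_0(r_1,t)=r_1+t\geq L_o(\sigma(t))$ on all of $[0,d(p,q)]$ by the triangle inequality. One then takes $\bar\phi=\sup\{\phi\in[0,\phi_0]: R_\phi(r_1,\cdot)\geq L_o\circ\sigma\text{ on }[0,d(p,q)]\}$, which makes the inequality uniform on the whole interval at $\bar\phi$. Because it is uniform, any interior touching point is a genuine two-sided minimum of the difference and Lemma \ref{l:basic} applies, and the four-case analysis then shows $\bar\phi$ is the angle of the comparison side, delivering both the comparison triangle and the inequality at once. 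This sweep-over-angle structure, with the $\phi=0$ anchor and the uniformity that makes Lemma \ref{l:basic} legitimate, is the idea your proposal is missing.
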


\begin{proof} 
To prove existence of the corresponding triangle one checks that (\ref{e:exist}) holds:
(1) $d(o,p) = L_o(p) < L_o(p) + d(p,q) < \ell$, (2) $d(o,q) \leq L_o(p) + d(p,q) < \ell$ by the triangle inequality, and (3) $d(p,q) <  D_\pi(L_o(p),L_o(q))$ since $$d(p,q) < \min\{\ell-L_o(p), inj_{\widetilde M}(\tilde p)\}$$ implies that $\tilde q$ lies on a meridian with $\theta < \pi$ by Proposition \ref{p:monotonicity}.

Let $r_1 = L_o(p)$ and pick $\tilde p$ so that $L_{\tilde o}(\tilde p) = r_1$. For each $\phi \in [0, \pi]$, let $\tilde \sigma _\phi : [0, inj_{\widetilde M}(\tilde p)] \rightarrow \widetilde M$ be the unit speed geodesic emanating from $\tilde p$ making an angle $\phi$ with the meridian through $\tilde p$. By Proposition \ref{p:monotonicity}, the function $R_\phi(r_1, t) = L_{\tilde o}(\tilde \sigma_\phi(t))$ is strictly decreasing as a function of $\phi \in[0,\pi]$ for each fixed $t \in (0,inj_{\widetilde M}(\tilde p))$.
Clearly there exists a $\phi_0$ such that $(L_o\circ \sigma)_+^\prime(0) =(L_{\tilde o}\circ \tilde\sigma_{\phi_0})^\prime(0)$.  Thus by stronger radial attraction
$$ L_o(\sigma(t))\geq L_{\tilde o}( \tilde\sigma_{\phi_0}(t)) = R_{\phi_0}(r_1,t)$$ 
for all $0 \leq t <\epsilon$ for some $\epsilon > 0$.

Because $L_o(p) + d(p,q) < \ell$ and by the triangle inequality
we have (with $\phi =0$)
$$R_0(r_1,t) = L_{\tilde o}(\tilde p) + t= L_o(p) +t \geq  L_o(p)+ d(p,\sigma(t)) \geq L_o(\sigma(t))$$
for all $ 0 \leq t \leq d(p,q)$. Set $\bar \phi = \sup \{ \phi \in [0, \phi_0] : R_\phi(r_1, t)\geq  L_o(\sigma(t)) \forall t\in [0,d(p,q)]\}$.
By continuity, $L_{\tilde o}(\tilde\sigma_{\bar \phi}(t)) \geq L_o(\sigma(t))$ for all $ t\in [0,d(p,q)]$.  

We will argue that, setting $\tilde q = \tilde\sigma_{\bar\phi}(d(p,q))$,
we have $L_{\tilde o}(\tilde q) = L_o(q)$ and  $\triangle\tilde o\tilde p\tilde q$ is the desired comparison triangle.
There are four cases to consider. (1) If $L_{\tilde o}(\tilde\sigma_{\bar \phi} (d(p,q))) = L_o(q)$ we are done. (2) If there exists a $t^\ast \in (0, d(p,q))$ such that
$L_{\tilde o} ( \tilde\sigma_{\bar \phi}(t^\ast)) = L(\sigma(t^\ast))$ then  by  Lemma \ref{l:basic} we have $L_{\tilde o} ( \tilde\sigma_{\bar \phi}(t)) = L(\sigma(t))$
for all $0 \leq t \leq d(p,q)$. In particular $L_{\tilde o}(\tilde\sigma_{\bar \phi} (d(p,q))) = L_o(q)$ and we are done. (3) If $ \bar\phi = \phi_0$, then by stronger radial  attraction we have for some $\epsilon >0$, $L_{\tilde o}(\tilde\sigma_{\bar\phi}(t)) \leq L_o(\sigma(t)) $ for $0 \leq t < \epsilon$. Hence, combining with the opposite inequality for $t \in [0,d(p,q)]$ gives $L_{\tilde o}(\tilde\sigma_{\bar\phi}(t)) = L_o(\sigma(t)) $ for $0 \leq t <\epsilon$. Thus application of  Lemma \ref{l:basic} shows $L_{\tilde o} ( \tilde\sigma_{\bar \phi}(t)) = L(\sigma(t))$
for all $0 \leq t \leq d(p,q)$. In particular $L_{\tilde o}(\tilde\sigma_{\bar \phi} (d(p,q)) = L_o(q)$ and we are done. (4) Finally assume $ \bar\phi < \phi_0$ and 
$L_{\tilde o}(\tilde\sigma_{\bar \phi}(t)) > L_o(\sigma(t))$ for all $ t\in (0,d(p,q)]$.  Pick $\phi^\ast$ with $\bar\phi < \phi^\ast < \phi_0$. Since $( L_{\tilde o} \circ \tilde\sigma_{\phi^\ast} )^\prime(0)> (L_o\circ\sigma)^\prime_+(0)$ there exists an $\eta>0$ such that $L_{\tilde o}(\tilde\sigma_{\phi^\ast}(t)) >  L_o(\sigma(t))$
for all $ 0 < t <\eta$. Hence by monotonicity of $ R_\phi(r_1, t)$ as a function of $\phi$, it follows that 
$$L_{\tilde o}(\tilde\sigma_\phi(t)) = R_\phi(r_1, t) \geq R_{\phi^\ast}(r_1,t) =L_{\tilde o}(\tilde\sigma_{\phi^\ast}(t)) > L_o(\sigma(t))$$ 
for all  $\phi \leq \phi^\ast$ and $ 0 < t <\eta$. Our  assumption (4) implies that $L_{\tilde o}(\tilde\sigma_{\bar \phi}(t)) > L_o(\sigma(t))$ for all $ t\in [\eta,d(p,q)]$. In particular by continuity and compactness, we can slightly increase $\phi$ from $\bar\phi$ to find a $\hat\phi$ satisfying $\bar\phi < \hat\phi < \phi^\ast$ such that 
 $L_{\tilde o}(\tilde\sigma_{\hat \phi}(t)) > L_o(\sigma(t))$ for all $ t\in [\eta,d(p,q)]$.  Hence $L_{\tilde o}(\tilde\sigma_{\hat \phi}(t)) \geq L_o(\sigma(t))$ for $t \in[0,d(p,q)]$ which contradicts the choice of $\bar\phi$. Thus the last case (4)  is impossible and the proof is complete.
\end{proof}

Proposition \ref{p:thin} allows us to estimate the size of $\epsilon$ in the definition of stronger radial attraction.

\begin{corollary}\label{c:UE}
Suppose that $(\widetilde M, \tilde o)$ has stronger radial attraction than $(M,o)$, and let $\sigma$ and $\tilde \sigma$ be unit speed geodesics in $M$ and $\widetilde M$ respectively such that $L_o\circ \sigma(0) = L_{\tilde o}\circ \tilde \sigma(0)<\ell$ and $(L_o\circ \sigma)^\prime_+(0) = (L_{\tilde o}\circ \tilde \sigma)^\prime_+(0)$. Suppose  $L_o\circ \sigma(t)$ has a 2--sided derivative for all $0<t<b$.  If 
$0< \epsilon < \min\{ b, \ell-L_o(\sigma(0)),inj_{\widetilde M}(\tilde \sigma(0)) \}$,
then
$L_o\circ \sigma(t) \geq L_{\tilde o}\circ \tilde\sigma(t)$ for all $0 \leq t < \epsilon$.
\end{corollary}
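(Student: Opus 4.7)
The plan is to adapt the four--case argument in the proof of Proposition \ref{p:thin} to each individual $t_1\in(0,\epsilon)$ and then exploit the strict monotonicity of $R_\phi$ from Proposition \ref{p:monotonicity}(2). Fix such a $t_1$ and set $p=\sigma(0)$, $r_1=L_o(p)$, $\tilde p=\tilde\sigma(0)$, and let $\phi_0\in[0,\pi]$ be the unique angle for which $(L_{\tilde o}\circ\tilde\sigma_{\phi_0})'(0)=(L_o\circ\sigma)'_+(0)$. The matching--initial--derivative hypothesis of the corollary forces $\tilde\sigma=\tilde\sigma_{\phi_0}$ up to the meridional reflection that preserves $L_{\tilde o}$, so $L_{\tilde o}(\tilde\sigma(t))=R_{\phi_0}(r_1,t)$.

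Mimicking the proof of Proposition \ref{p:thin}, set
\[
\bar\phi=\sup\{\phi\in[0,\phi_0]:R_\phi(r_1,t)\ge L_o(\sigma(t))\ \forall\,t\in[0,t_1]\}.
\]
The triangle inequality $L_o(\sigma(t))\le r_1+t=R_0(r_1,t)$ (valid because $\sigma$ is unit speed, regardless of whether it is minimizing) together with the bound $t_1<\epsilon<\ell-r_1$ shows that $0$ lies in this set, so $\bar\phi$ is well defined and attained by continuity. Running through the same four--case analysis as in the proof of Proposition \ref{p:thin}, with each occurrence of $d(p,q)$ replaced by $t_1$, then yields $R_{\bar\phi}(r_1,t_1)=L_o(\sigma(t_1))$: case (1) is immediate, cases (2) and (3) reduce to Lemma \ref{l:basic} (its 2--sided derivative hypothesis is supplied by the corollary and its bound $L_{\tilde o}\circ\tilde\sigma_{\bar\phi}<\ell$ follows from $R_{\bar\phi}\le R_0<\ell$), and case (4) yields the same contradiction as in the original proof.

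To conclude, observe that the bound $t_1<inj_{\widetilde M}(\tilde p)$ places us in the range of strict monotonicity in Proposition \ref{p:monotonicity}(2), whence $\bar\phi\le\phi_0$ gives
\[
L_o(\sigma(t_1))=R_{\bar\phi}(r_1,t_1)\ge R_{\phi_0}(r_1,t_1)=L_{\tilde o}(\tilde\sigma(t_1)).
\]
Since $t_1\in(0,\epsilon)$ was arbitrary, the corollary follows. The main subtlety I expect is verifying that the four--case analysis of Proposition \ref{p:thin} transfers cleanly when $\sigma|_{[0,t_1]}$ is not assumed to be minimizing; a close inspection shows that the argument uses only unit speed, the triangle inequality, Lemma \ref{l:basic}, and the local stronger radial attraction inequality at $t=0$, so the adaptation is essentially bookkeeping.
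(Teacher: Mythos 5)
Your argument is correct and is essentially the paper's proof: the paper also fixes $t_0\in(0,\epsilon)$, compares $L_o\circ\sigma$ against the family $R_\phi(r_1,\cdot)$ via the machinery of Proposition~\ref{p:thin}, and finishes with the strict monotonicity of $R_\phi$. Where the paper simply cites Proposition~\ref{p:thin} applied to the triangle $\triangle op\sigma(t_0)$ --- tacitly treating $\sigma|_{[0,t_0]}$ as the minimizing side of a thin geodesic triangle --- you re-run its four--case analysis inline; that is a reasonable precaution, since the corollary's $\sigma$ is not assumed minimizing and so $\sigma|_{[0,t_0]}$ need not literally be a side of a geodesic triangle in the sense of Section~\ref{s:tri}, and you correctly identify that the four--case argument uses only the unit--speed property, the triangle inequality, Lemma~\ref{l:basic}, and the stronger radial attraction inequality at $t=0$.
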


\begin{proof} To simplify notation set $p = \sigma(0)$ and $\tilde p = \tilde\sigma(0)$. As in the proof of Proposition \ref{p:thin}, let $\phi_0$ be the angle that 
the tangent vector $\tilde\sigma^\prime(0)$ makes with the meridian.
By choice of $\epsilon$, for every $0<t_0 <\epsilon$, the triangle $\triangle op\sigma(t_0)$ is thin. Hence by Proposition \ref{p:thin},
there exists a corresponding triangle $\triangle \tilde o\tilde p\tilde q_{t_0}$ such that 
$$ L_o(\sigma(t)) \leq L_{\tilde o}(\tilde\sigma_{t_0}(t))$$
for  $0\leq t \leq t_0$ where  $\tilde\sigma_{t_0}$ is the side joining $\tilde p$ to $\tilde q_{t_0}$.
It follows that 
$$(L_{\tilde o}\circ\tilde\sigma_{t_0})_+^\prime(0) \geq (L_o\circ \sigma)^\prime_+(0)=(L_{\tilde o}\circ \tilde \sigma)^\prime_+(0)$$ 
which implies that the angle  made by the tangent vector $\tilde\sigma_{t_0}^\prime(0)$  with the meridian is less than or equal to $\phi_0$.  Hence by monotonicity of the function $R_\phi$, 
$$ L_o\circ \sigma(t_0) = L_{\tilde o}\circ \tilde\sigma_{t_0}(t_0) \geq L_{\tilde o}\circ\tilde\sigma(t_0).$$
Since $t_0 \in (0,\epsilon)$ is arbitrary, that concludes the proof. 
\end{proof}

\begin{remark}\label{r:thin}
The proof of Corollary \ref{c:UE} shows that if for every thin  $\triangle opq$ and its corresponding $\triangle\tilde  o\tilde p\tilde q$,
$$ L_o(\sigma(t)) \leq L_{\tilde o}(\tilde\sigma(t))$$
for  $0\leq t \leq d(p,q)$ , then $(\widetilde M, \tilde  o)$ has stronger radial attraction than $(M,o)$.
\end{remark}

\subsection{Bad Encounters} Analogous to the term \emph{bad encounter for the weaker radial attraction} defined in \cite[Section 4]{HI2}, we define \emph{bad encounter for the stronger radial attraction}, but before doing so we need to review some notation. 

Let $p\neq o$ be a point in the pointed space $(M,o)$ and $\tilde p$ in $\widetilde M$ with $d(\tilde o, \tilde p) =d(o,p)$.  Define the reference map $F : M \rightarrow \mathbf{R}^2$ by
$F(q)=(d(p,q),d(o,q))$ for $q\in M$ and the reference map $\widetilde F :\widetilde  M \rightarrow \mathbf{R}^2$ by $\widetilde F(\tilde q) = (d(\tilde p,\tilde q),d(\tilde o,\tilde q))$ for $\tilde q \in \widetilde M$.  More about the reference maps can be found in \cite{ISU} and \cite{HI2}.

The structure of the cut locus  of $\tilde p$ in $\widetilde M$ is described in \cite[Section 2]{HI2}. To summarize: $C(\tilde p)$ is a tree.  The trunk of the tree is the portion of $C(\tilde p)$ contained in the opposite meridian from $\tilde p$, 
and the positive branches are the connected components of $C(\tilde p) \cap int(\widetilde M^+)$. If $\tilde q$ lies in a positive branch,  there is an arc $\alpha$ in the branch joining $\tilde q$ to the trunk which is parameterized by distance from $\tilde p$. Thus $\alpha$ defines a homeomorphism into $C(\tilde p)$ defined for $t$ in some interval $d(\tilde p, \tilde q) \leq t < b\leq \infty$ such that $d(\tilde p, \alpha(t))=t$.

\begin{definition}\label{d:badencounter}
Let $\sigma : [0,l] \to M$ be a minimizing geodesic in $M$ emanating from $p \in M$.  
We say that $\sigma$ has an \emph{encounter with the cut locus} of $\tilde p$ at $t_0 \in (0,l)$ if
$F(\sigma(t_0)) \in \widetilde F(C(\tilde p)\cap int(\widetilde M^+))$. 
\end{definition}

\begin{definition}
 Suppose that $\tilde q $ is the unique point in $C(\tilde p) \cap int(\widetilde M^+)$  such that $\widetilde F(\tilde q) = F(\sigma(t_0))$ and $\alpha$ is the arc in $C(\tilde p)$ joining $\tilde q$ to the trunk. The encounter at $t_0$ is a \emph{bad encounter for stronger radial attraction} if for every $\epsilon >0$ there exists $ t^\ast \in (t_0, t_0+\epsilon)$ such that $L_o ( \sigma(t^\ast)) < L_{\tilde o}(\alpha(t^\ast))$.     If no confusion arises from the context, we may simply say \emph{bad encounter}. 
\end{definition}

\subsection{Triangle Comparison Theorem}

 \begin{theorem} \label{t:TCT}
 Suppose that the model surface $(\widetilde M,\tilde o)$ has stronger radial attraction than $(M,o)$.
 Let $\triangle opq$ be a geodesic triangle in $M$ for which there exists a corresponding triangle $\triangle \tilde o\tilde p\tilde q$ in $\widetilde M$, that is, Equations (\ref{e:correspond}) and (\ref{e:exist}) hold. Let $\sigma: [0, d(p,q)] \rightarrow M$ be the side joining $p$ to $q$.  Assume that (1) $L_o\circ\sigma(t)$ has a 2--sided derivative for all $0<t<d(p,q)$ and (2) $\sigma$ has   no bad encounters with $C(\tilde p)$.  If $\tilde \sigma$ is the side joining $\tilde p$ to $\tilde q$, then
\begin{equation}\label{e:AC}
 L_o\circ \sigma (t) \leq  L_{\tilde o} \circ \tilde \sigma(t)
 \end{equation}
 for all $0\leq t \leq d(p,q)$.
 \end{theorem}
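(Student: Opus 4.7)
The plan is to extend Proposition \ref{p:thin} from thin subtriangles of $\triangle opq$ to the full triangle by a continuation argument in the arclength parameter of the side $\sigma$. For each $t_0 \in (0, d(p,q)]$ I consider the subtriangle $\triangle op\sigma(t_0)$. When $t_0$ is small enough that $L_o(p)+t_0<\ell$ and $t_0 < inj_{\widetilde M}(\tilde p)$, this subtriangle is thin, so Proposition \ref{p:thin} produces a comparison triangle $\triangle\tilde o\tilde p\tilde q(t_0)$ in $\widetilde M^+$ whose $\tilde p$-side $\tilde\sigma^{(t_0)}$ satisfies $L_o\circ\sigma(s)\leq L_{\tilde o}\circ\tilde\sigma^{(t_0)}(s)$ on $[0,t_0]$. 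I will then let $T$ be the supremum of those $t_0\in(0,d(p,q)]$ for which this comparison continues continuously, in the sense that $\tilde q(t_0)$ traces a continuous path in $\widetilde M^+$ and $\tilde\sigma^{(t_0)}$ varies continuously through a family of uppermost minimizing geodesics from $\tilde p$, and will aim to prove both $T = d(p,q)$ and $\tilde\sigma^{(T)}=\tilde\sigma$.

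Two mechanisms would control the continuation past a critical $t_0^\ast \leq T$. If the inequality $L_o\circ\sigma \leq L_{\tilde o}\circ\tilde\sigma^{(t_0^\ast)}$ sharpens to equality at some interior point of $[0,t_0^\ast]$, then Lemma \ref{l:basic} forces equality on the whole interval, so the stronger radial attraction hypothesis---in the form of Corollary \ref{c:UE} applied at $\sigma(t_0^\ast)$---lets me push the comparison strictly past $t_0^\ast$. If instead $\tilde\sigma^{(t_0)}$ approaches the cut locus of $\tilde p$ as $t_0\uparrow t_0^\ast$, then the reference maps $F$ and $\widetilde F$ show that $\sigma$ is having an encounter with $C(\tilde p)$ at $t_0^\ast$ in the sense of Definition \ref{d:badencounter}. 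The no-bad-encounter hypothesis says exactly that, for $t^\ast$ immediately past $t_0^\ast$, $L_o(\sigma(t^\ast))\geq L_{\tilde o}(\alpha(t^\ast))$ along the cut-locus arc $\alpha$ attached to $\tilde q(t_0^\ast)$. This permits continuously deforming $\tilde q(t_0)$ onto $\alpha$ while preserving the comparison inequality, and then restarting the thin-triangle argument from the new vertex configuration.

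The main obstacle will be the cut-locus case: at an encounter, uniqueness of the minimizing geodesic from $\tilde p$ to $\tilde q(t_0)$ fails, and the family $\tilde\sigma^{(t_0)}$ must be continued through a combinatorial switch between branches of $C(\tilde p)$ while respecting the uppermost-minimizer convention of Section \ref{s:tri}. The tree structure of $C(\tilde p)$ recalled just above the statement provides the required combinatorial bookkeeping, but verifying that the comparison inequality survives every branch transition---and that successive applications of Lemma \ref{l:basic} glue correctly at the junctions---is where the detailed case analysis will live. Once $T=d(p,q)$ has been achieved, the endpoint constraints $d(\tilde p,\tilde q(T))=d(p,q)$ and $L_{\tilde o}(\tilde q(T))=L_o(q)$, together with the $\widetilde M^+$ and uppermost-minimizer conventions imposed on $\triangle\tilde o\tilde p\tilde q$, force $\tilde\sigma^{(T)}=\tilde\sigma$ and yield \eqref{e:AC}.
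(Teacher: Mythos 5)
Your plan parametrizes the continuation by the arclength $t_0$ along $\sigma$, growing subtriangles $\triangle op\sigma(t_0)$ and their comparison triangles $\triangle\tilde o\tilde p\tilde q(t_0)$. The paper instead parametrizes by the angle $\phi$ that the comparison geodesic $\tilde\sigma_\phi$ makes with the meridian at $\tilde p$, fixes the full interval $[0,d(p,q)]$, and constructs for each $\phi$ a barrier function $f_\phi$ in the reference space built from the concatenation $\varsigma_\phi = \tilde\sigma_\phi\cdot\alpha_\phi$ followed by the line $x+y=x_0+y_0$; the monotonicity of $R_\phi$ in $\phi$ (Proposition~\ref{p:monotonicity}) makes $\{f_\phi\}$ a nested family of barriers and $\bar\phi=\sup\{\phi: f<f_\phi\}$ becomes the critical object. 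So the two routes are genuinely different, even though both are open--closed arguments and both invoke Lemma~\ref{l:basic}, Corollary~\ref{c:UE}, and the no--bad--encounter hypothesis.

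The gap in your proposal is precisely the step you label the ``main obstacle'': continuation past $t_0 = inj_{\widetilde M}(\tilde p)$, or more generally past the point where $\tilde\sigma^{(t_0)}(t_0)$ reaches $C(\tilde p)$. Proposition~\ref{p:thin} is the only engine you cite that actually \emph{produces} the comparison inequality, and it is inherently a thin--triangle result: it requires $t_0 < inj_{\widetilde M}(\tilde p)$ and $L_o(p)+t_0<\ell$. Once either bound fails, there is no thin subtriangle to which it applies, and the phrase ``restarting the thin--triangle argument from the new vertex configuration'' has no meaning, because the thin triangles of Proposition~\ref{p:thin} must have $\tilde p$ as the apex and be shorter than $inj_{\widetilde M}(\tilde p)$, which is now impossible. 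You also cannot simply ``continuously deform $\tilde q(t_0)$ onto $\alpha$'': $\tilde q(t_0)$ is rigidly determined by $\widetilde F(\tilde q(t_0))=F(\sigma(t_0))$, and when $\tilde q(t_0)$ crosses a branch point of $C(\tilde p)$ the uppermost minimizing geodesic $\tilde\sigma^{(t_0)}$ may jump discontinuously, which breaks the continuity on which your supremum $T$ depends. What is needed to close this is exactly the device the paper introduces: replace the minimizing geodesic by the concatenated curve $\varsigma_\phi$ that continues along the cut--locus tree, and work with the induced function $f_\phi$ in the reference rectangle, where the inequality to be preserved (and its failure, if a bad encounter occurs) becomes a one--dimensional statement about graphs that never needs the thin--triangle proposition again. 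Without that (or an equivalent construction), the continuation stalls at the injectivity radius of $\tilde p$. A secondary omission: you never treat the degenerate case $F(q)\in\partial R$, which the paper isolates in Lemma~\ref{l:degenerate}; your $T>0$ base step uses Proposition~\ref{p:thin} and hence nondegeneracy, so the boundary cases need a separate (easy) argument.
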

 
 \begin{proof}
 Since Equations (\ref{e:correspond}) and (\ref{e:exist}) hold, we have $F(q) \in \widetilde F(\widetilde M)$.
 If $r_0 = d(\tilde o,\tilde p)$, then by \cite[Section 2]{HI2},
 $$\widetilde F(\widetilde M) \subset R \equiv \{ (x,y) : r_0 \leq x+y \leq  2\ell-r_0, -r_0 \leq y-x \leq r_0\}.$$
If $F(q)$ lies in the boundary of  $R$ then  $\triangle\tilde o \tilde p\tilde q$ is degenerate and
(\ref{e:AC}) holds without any extra hypothesis.
 
\begin{lemma}\label{l:degenerate}
Suppose $F(q) \in \partial R$, then (\ref{e:AC}) holds.
\end{lemma} 
\begin{proof}
Set $F(q) = (x_0,y_0)$.  We consider each case separately. See Figure \ref{f:degenerate}.

(i) If $x_0+y_0=r_0$, then $(L_o\circ\sigma)^\prime (0) = -1$, and $q$ lies on the minimal geodesic joining $o$ to $p$.  Clearly, $L_o\circ\sigma(t) = r_0 - t = L_{\tilde o}\circ \tilde \sigma(t)$ for $0\leq t \leq d(p,q)$.

(ii) If $y_0-x_0 = r_0$, then $(L_o\circ\sigma)^\prime (0) = 1$, and $p$ lies on the minimizing geodesic joining $o$ to $q$.
Clearly, $L_o\circ\sigma(t) = r_0 + t = L_{\tilde o}\circ \tilde \sigma(t)$ for $0\leq t \leq d(p,q)$.

(iii) If $y_0-x_0 = -r_0$, then $(L_o\circ\sigma)^\prime (0) = -1$ and $o$ lies on the minimal geodesic joining $p$ to $q$.
Clearly 
$L_o\circ\sigma(t) =| t-r_0| = L_{\tilde o}\circ\tilde \sigma(t)$ for $0\leq t \leq d(p,q)$, even though $L_o\circ\sigma$ does not have a 2--sided derivative at $t=r_0$.

(iv) If  $x_0+y_0 = 2\ell-r_0$, then  two applications of the triangle inequality:
$$d(o,\sigma(t))\leq d(o,p)+d(p,\sigma(t))=r_0+t\quad \mathrm{for}\quad 0\leq t\leq \ell-r_0$$ 
and
$$d(o,\sigma(t))\leq d(o,q)+d(q,\sigma(t))=y_0 + x_0-t = 2\ell-r_0 -t  $$
for $\ell-r_0\leq t\leq d(p,q)$,
imply that
$$L_o\circ \sigma(t) \leq L_{\tilde o}\circ\tilde \sigma(t) = 
\left\{ 
\begin{array}{ccl}r_0+t&\mathrm{if}&0\leq t \leq \ell-r_0\\ 2\ell-r_0-t &\mathrm{if}&\ell-r_0 \leq t\leq d(p,q).\end{array}
\right.
$$
 \end{proof}
 
 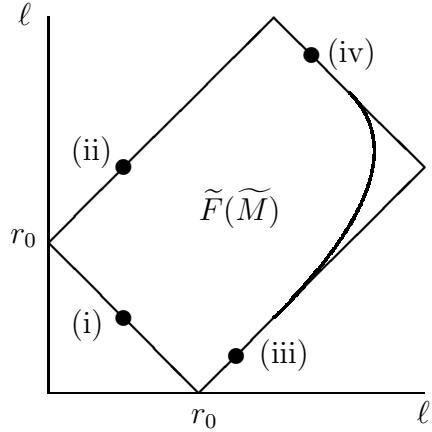
\begin{figure}[htbp]
\begin{center}
\setlength{\unitlength}{1cm}
\begin{picture}(6,6)(-1,-1)
\put(0,0){\line(1,0){5}} 
\put(0,0){\line(0,1){5}} 
\put(-0.5,2){$r_0$}
\put(-0.4,4.9){$\ell$}
\put(1.9,-0.4){$r_0$}
\put(4.9,-0.4){$\ell$}
\thicklines 
\put(0,2){\line(1,-1){2}}
\put(0,2){\line(1,1){3}}
\put(2,0){\line(1,1){3}}
\put(3,5){\line(1,-1){2}}
\qbezier(3,1)(5,3)(4,4)
\put(2.0,2.3){$\widetilde F(\widetilde M)$}
\put(1,1){\circle*{0.2}} \put(0.3,0.8){(i)}
\put(1,3){\circle*{0.2}}\put(0.3,3.1){(ii)}
\put(3.5,4.5){\circle*{0.2}}\put(3.7,4.4){(iv)}
\put(2.5,0.5){\circle*{0.2}}\put(2.8,0.4){(iii)}
\end{picture}
\caption{The set $R$ containing $\widetilde F(\widetilde M)$ with possible positions for $F(q)$ on $\partial R$ which give  degenerate triangles corresponding to cases (i) through (iv). In this figure  $ F(p)= (0,r_0) $ and $F(o)=(r_0,0)$.}
\label{f:degenerate}
\end{center}
\end{figure}

 From now on assume that $F(q)=(x_0,y_0)$ lies in the interior of $R$. As in the proof of Proposition \ref{p:thin}, for each $\phi \in [0,\pi]$, let $\tilde \sigma_\phi$ be the unit speed maximal minimizing geodesic emanating from $\tilde p$ making the angle $\phi$ with the meridian through $\tilde p$. Let $\phi_0$ be the angle such that $\tilde \sigma_{\phi_0} = \tilde\sigma$.  
 
 If $t_\phi^c$ denotes distance to the cut point along $\tilde\sigma_\phi$, then $\tilde\sigma_\phi$ is defined on the interval 
 $[0,t_\phi^c]$.
 Let $\alpha_\phi$ be the arc in $C(\tilde p)$ joining $\tilde \sigma_\phi(t_\phi^c)$ to the trunk. ($\alpha_\phi$ may be empty if 
 $\tilde \sigma_\phi(t_\phi^c)$ is in the trunk.)
 We define
 $\varsigma_\phi$ to be the concatenated curve $\tilde\sigma_\phi\cdot \alpha_\phi$.
 (cf. \cite{HI2}.)
 If $\phi  \leq \phi_0$, then the curve $\widetilde F(\varsigma_\phi)$ lies above the curve $\widetilde F(\tilde\sigma)$  on the interval $[0, d(p,q)]$, so that there exists a parameter value $\check{t}_\phi \in(0,d(p,q))$ 
 where $\widetilde F(\varsigma_\phi)$ crosses the line
 $x+y= x_0+y_0$ in the reference space $\widetilde F( \widetilde M)$.
 This leads to the definition for each $ 0 \leq \phi \leq \phi_0$,
\begin{equation*}
 f_\phi (t) =
\left\{
\begin{array}{ll}
  d(\tilde o,\varsigma_\phi(t)) & \mathrm{if\enspace} 0\leq t \leq \check t_\phi    \\
x_0+y_0 -t &  \mathrm{if\enspace} \check t_\phi \leq t \leq d(p,q).    
\end{array}
\right.
\end{equation*}
See Figure \ref{f:fphi}.
Thus $f_\phi(t)$ is continuous in $\phi$ and $t$.
Moreover  $f_{\phi_0}(t) = L_{\tilde o}\circ \tilde\sigma(t)$.  Therefore our goal is to prove   $f_{\phi_0}(t) \geq f(t)= L_o\circ\sigma(t)$ for $ 0 \leq t \leq d(p,q)$. 

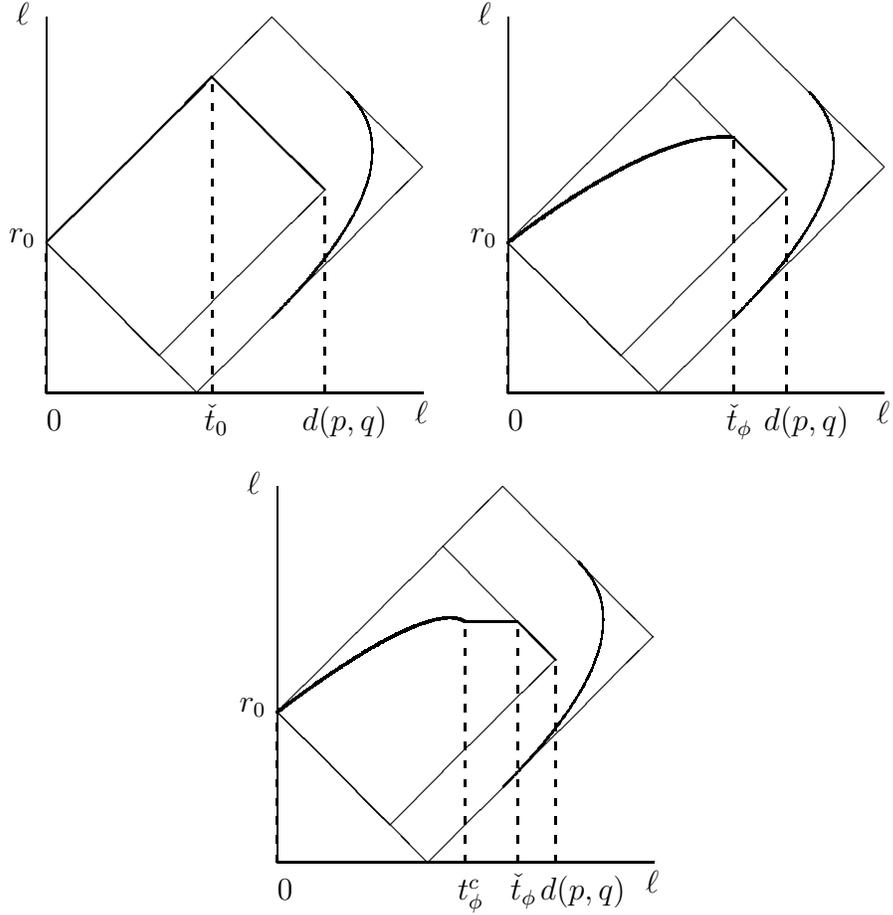
\begin{figure}[htbp]
\setlength{\unitlength}{1cm}
\begin{picture}(6,6)(-1,-1)
\thinlines
\put(0,0){\line(1,0){5}} 
\put(0,0){\line(0,1){5}} 
\put(0,2){\line(1,-1){2}}
\put(0,2){\line(1,1){3}}
\put(2,0){\line(1,1){3}}
\put(3,5){\line(1,-1){2}}
\qbezier(3,1)(5,3)(4,4)
\put(1.5,0.5){\line(1,1){2.2}}
\put(0,2){\line(1,-1){1.5}}
\put(-0.5,2){$r_0$}
\put(-0.4,4.9){$\ell$}
\put(4.9,-0.4){$\ell$}
\thicklines
\multiput(0,0)(0,.25){8}{\line(0,1){.1}}\put(0,-0.5){$0$}
\multiput(3.7,0)(0,.25){11}{\line(0,1){.1}}\put(3.4,-0.5){$d(p,q)$}
\multiput(2.2,0)(0,.25){17}{\line(0,1){.1}}\put(2.1,-0.5){$\check t_0$}
\thicklines
\put(0,2){\line(1,1){2.2}}
\put(2.2,4.2){\line(1,-1){1.5}}
\end{picture}
\begin{picture}(6,6)(-1,-1) 
\thinlines
\put(0,0){\line(1,0){5}} 
\put(0,0){\line(0,1){5}} 
\put(0,2){\line(1,-1){2}}
\put(0,2){\line(1,1){3}}
\put(2,0){\line(1,1){3}}
\put(3,5){\line(1,-1){2}}
\qbezier(3,1)(5,3)(4,4)
\put(2.2,4.2){\line(1,-1){1.5}}
\put(1.5,0.5){\line(1,1){2.2}}
\put(0,2){\line(1,-1){1.5}}
\put(-0.5,2){$r_0$}
\put(-0.4,4.9){$\ell$}
\put(4.9,-0.4){$\ell$}
\thicklines
\multiput(0,0)(0,.25){8}{\line(0,1){.1}}\put(0,-0.5){$0$}
\multiput(3.7,0)(0,.25){11}{\line(0,1){.1}}\put(3.4,-0.5){$d(p,q)$}
\multiput(3,0)(0,.25){14}{\line(0,1){.1}}\put(2.9,-0.5){$\check t_\phi$}
\qbezier(0,2)(2,3.5)(3,3.4)
\put(3,3.4){\line(1,-1){0.7}}
\end{picture}
\begin{center}
\begin{picture}(6,6)(-1,-1) 
\thinlines
\put(0,0){\line(1,0){5}} 
\put(0,0){\line(0,1){5}} 
\put(0,2){\line(1,-1){2}}
\put(0,2){\line(1,1){3}}
\put(2,0){\line(1,1){3}}
\put(3,5){\line(1,-1){2}}
\qbezier(3,1)(5,3)(4,4)
\put(2.2,4.2){\line(1,-1){1.5}}
\put(1.5,0.5){\line(1,1){2.2}}
\put(0,2){\line(1,-1){1.5}}
\put(-0.5,2){$r_0$}
\put(-0.4,4.9){$\ell$}
\put(4.9,-0.4){$\ell$}
\thicklines 
\multiput(0,0)(0,.25){8}{\line(0,1){.1}}\put(0,-0.5){$0$}
\multiput(3.7,0)(0,.25){11}{\line(0,1){.1}}\put(3.5,-0.5){$d(p,q)$}
\multiput(3.2,0)(0,.25){13}{\line(0,1){.1}}\put(3.1,-0.5){$\check t_\phi$}
\multiput(2.5,0)(0,.25){13}{\line(0,1){.1}}\put(2.4,-0.5){$ t^c_\phi$}
\qbezier(0,2)(2,3.5)(2.5,3.2)
\put(2.5,3.2){\line(1,0){0.7}}
\put(3.2,3.2){\line(1,-1){0.5}}
\end{picture}
\end{center}
\caption{Showing typical graphs of $f_\phi$ for $\phi =0$ (upper left),  $t_\phi^c > \check t_\phi$ (upper right), and  $t_\phi^c < \check t_\phi$(bottom center) as curves in the reference space.}
 \label{f:fphi}
\end{figure}

Since $\triangle opq$ is nondegenerate, we have that $f_0 (t) > f(t)$ for $0< t < d(p,q)$, 
$f_0^\prime(0) = 1 > f^\prime(0)$,
and $f_\phi(t) > f(t)$ for $\check t_\phi \leq t < d(p,q)$ and $0 \leq \phi <\phi_0$. 
Set 
$$ \bar \phi = \sup \{ 0 \leq \phi  \leq \phi_0 : f^\prime(0) < f_\phi^\prime(0)\enspace\mathrm{and}\enspace f(t) < f_\phi(t) \enspace\mathrm{for}\enspace 0< t < d(p,q)\}.$$ 

If $ \bar \phi = \phi_0$, then $f \leq f_{\phi_0}$ and we are done. Hence we assume that $\bar \phi < \phi_0$. Then by continuity and compactness $f(t) \leq f_{\bar \phi}(t)$ for all $t$ and either $f^\prime(0) = f^\prime_{\bar\phi}(0)$ or there exists a $\bar t$, $0< \bar t< d(p,q)$, such that $f_{\bar\phi}(\bar t) = f(\bar t)$.

\begin{remark} \label{r:tbar}
We note that if such a $\bar t$ exists, it would be that $0< \bar t< \check t_{\bar\phi}$ for the following reason:
$F(\sigma(t))$  is confined in the rectangle $\{(x,y): r_0 < x+y < x_0+y_0,    y_0-x_0   < y-x < r_0\}$. (See \cite[Lemma 1.2]{HI2}.) Thus $f(t) < x_0+y_0 -t = f_{\bar \phi}(t)$ for $\check t_{\bar\phi} \leq t < d(p,q)$. See Figure \ref{f:confined},
\end{remark}

\begin{figure}[htbp]
\begin{center}
\setlength{\unitlength}{1cm}
\begin{picture}(6,6)(-1,-1)
\put(0,0){\line(1,0){5}} 
\put(0,0){\line(0,1){5}} 
\put(0,2){\line(1,-1){2}}
\put(0,2){\line(1,1){3}}
\put(2,0){\line(1,1){3}}
\put(3,5){\line(1,-1){2}}
\qbezier(3,1)(5,3)(4,4)
\put(-0.5,2){$r_0$}
\put(-0.4,4.9){$\ell$}
\put(1.9,-0.4){$r_0$}
\put(4.9,-0.4){$\ell$}
\qbezier(0,2)(1,1.5)(3.5,2.5)
\put(1.3,2.1){$F\circ\sigma$}
\thicklines 
\put(1.5,0.5){\line(1,1){2}}
\put(0,2){\line(1,-1){1.5}}
\put(0,2){\line(1,1){2}}
\put(2,4){\line(1,-1){1.5}}
\put(3.5,2.5){\circle*{0.2}} \put(3.5,2.5){$^{F(q)}$}
\end{picture}
\caption{Showing $F(\sigma(t))$ confined to the rectangle  $\{(x,y): r_0 < x+y < x_0+y_0,    y_0-x_0   < y-x < r_0\}$ where $F(q) = (x_0,y_0)$.}
\label{f:confined}
\end{center}
\end{figure}
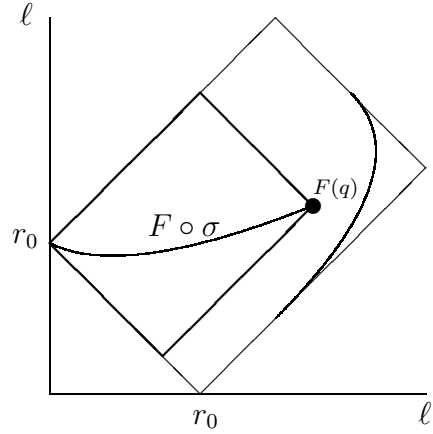

First we assume the existence of $\bar t$.
Thus either
$\varsigma_{\bar\phi}(\bar t)$ is a cut point of $C(\tilde p)$ on $\alpha_{\bar\phi}$, or it lies in the domain of $\tilde\sigma_\phi$.
 In the first case, $\sigma$ has an encounter with $C(\tilde p)$ at the parameter $\bar t$. Because there are no bad encounters, $f(t) \geq f_{\bar\phi}(t)$ for all $ \bar t \leq t \leq \check t_{\bar\phi}$ which contradicts $f(\check t_{\bar\phi}) < f_{\bar\phi}(\check t_{\bar\phi})$.  
Thus $\bar t$ must be a parameter value along $\tilde\sigma_{\bar\phi}$.  But then applying  Lemma \ref{l:basic}  with $\sigma$ and $\tilde\sigma_{\bar\phi}$, it follows
that $ f(t) = f_{\bar\phi}(t)$ for all $0 \leq t \leq \min( t^c_{\bar\phi}, \check t_{\bar\phi})$.
Depending upon which of  $t^c_{\bar\phi}$ or $ \check t_{\bar\phi}$ is the smaller, this leads either to an encounter with the cut locus at $t= t^c_{\bar\phi}$ which leads to a contradiction as above or to
$f(\check t_{\bar\phi}) = f_{\bar\phi}(\check t_{\bar\phi})$ which is impossible.  
Hence there is no such $\bar t$, and we would have the case $f_{\bar\phi}^\prime(0) = f^\prime(0)$. By stronger radial attraction we have for some $\epsilon>0$, $f_{\bar\phi}(t) \leq f(t)$ for all $0\leq t <\epsilon$.   But also $f(t) \leq f_{\bar \phi}(t)$ for all $t$. Hence  $f_{\bar\phi}(t) = f(t)$ for all $0\leq t <\epsilon$.  
Thus by Lemma \ref{l:basic}, this again leads to $ f(t) = f_{\bar\phi}(t)$ for all $0 \leq t \leq \min( t^c_{\bar\phi}, \check t_{\bar\phi})$ which we just saw was impossible. 

Therefore we conclude that $\bar \phi = \phi_0$ which completes the proof.

 \end{proof}

\begin{remark}
The condition of no bad encounters is automatically satisfied whenever the model $\widetilde M$ has the property that the cut locus of every point $\tilde p$ is a subset of the opposite meridian.  The paper \cite{TAS} presents sufficient conditions for a model to have the stated property.
\end{remark}

The following angle comparison holds as well.

\begin{corollary}\label{c:angle}
Under the assumptions of Theorem \ref{t:TCT}, we have $\measuredangle opq \leq \measuredangle \tilde o\tilde p\tilde q$ and $\measuredangle oq p\leq \measuredangle \tilde o \tilde q\tilde p$. 
\end{corollary}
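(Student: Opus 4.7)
The plan is to deduce the angle comparisons directly from the distance comparison in Theorem \ref{t:TCT} by differentiating at the endpoints of the side $\sigma$. First I would recall the conventions from Section \ref{s:tri}: $\tau$ is chosen so that $\langle \tau'(d(o,p)),\sigma'(0)\rangle=(L_o\circ\sigma)_+'(0)$ and $\gamma$ is chosen so that $\langle \gamma'(d(o,q)),\sigma'(d(p,q))\rangle=(L_o\circ\sigma)_-'(d(p,q))$. Since the vector pointing from $p$ toward $o$ is $-\tau'(d(o,p))$ and the vector pointing from $p$ toward $q$ is $\sigma'(0)$, one has $\cos\measuredangle opq=-(L_o\circ\sigma)_+'(0)$. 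Similarly, $\cos\measuredangle oqp=(L_o\circ\sigma)_-'(d(p,q))$. In the model, $\tilde p,\tilde q\notin C(\tilde o)$, so $L_{\tilde o}\circ\tilde\sigma$ is smooth at the endpoints and the analogous formulas hold without one-sided decorations.

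Next I would apply Theorem \ref{t:TCT}. Set $f(t)=L_o\circ\sigma(t)$ and $\tilde f(t)=L_{\tilde o}\circ\tilde\sigma(t)$. The theorem gives $f(t)\leq\tilde f(t)$ on $[0,d(p,q)]$, with equality at both endpoints (by (\ref{e:correspond})). At $t=0$, dividing by $t>0$ and taking the limit yields
\[
(L_o\circ\sigma)_+'(0)=\lim_{t\to 0^+}\frac{f(t)-f(0)}{t}\leq\lim_{t\to 0^+}\frac{\tilde f(t)-\tilde f(0)}{t}=(L_{\tilde o}\circ\tilde\sigma)'(0).
\]
Negating gives $\cos\measuredangle opq\geq\cos\measuredangle\tilde o\tilde p\tilde q$, and since cosine is monotonically decreasing on $[0,\pi]$, I conclude $\measuredangle opq\leq\measuredangle\tilde o\tilde p\tilde q$.

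For the second inequality I would do the symmetric computation at the right endpoint. For $t<d(p,q)$, the difference $t-d(p,q)$ is negative, so the inequality $f(t)-f(d(p,q))\leq\tilde f(t)-\tilde f(d(p,q))$ flips upon division, giving
\[
(L_o\circ\sigma)_-'(d(p,q))=\lim_{t\to d(p,q)^-}\frac{f(t)-f(d(p,q))}{t-d(p,q)}\geq(L_{\tilde o}\circ\tilde\sigma)'(d(p,q)).
\]
This reads $\cos\measuredangle oqp\geq\cos\measuredangle\tilde o\tilde q\tilde p$, whence $\measuredangle oqp\leq\measuredangle\tilde o\tilde q\tilde p$.

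No serious obstacle is anticipated: the whole argument is a one-sided differentiation at the endpoints combined with the sign convention for cosines. The only thing to be careful about is the direction of the inequality after dividing by a negative quantity when differentiating from the left at $t=d(p,q)$, and the identification of the cosines of the two interior angles with the appropriate one-sided derivatives of $L_o\circ\sigma$ via Lemma \ref{l:derivative} and the conventions of Section \ref{s:tri}.
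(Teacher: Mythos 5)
Your proposal is correct and follows essentially the same route as the paper: differentiate the distance comparison $L_o\circ\sigma\leq L_{\tilde o}\circ\tilde\sigma$ at the two endpoints (where equality holds by the correspondence of side lengths) to get the one-sided derivative inequalities, then translate these into angle inequalities via the sign conventions of Section \ref{s:tri} and Lemma \ref{l:derivative}. You have merely spelled out the cosine identifications and the sign flip on division by a negative increment that the paper leaves implicit.
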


\begin{proof} Since $L_o\circ\sigma(0)=L_{\tilde o}\circ\tilde\sigma(0)$ and $L_o\circ\sigma(d(p,q))=L_{\tilde o}\circ\tilde\sigma(d(p,q))$,
 Equation (\ref{e:AC}) implies $(L_o\circ\sigma)_+^\prime(0) \leq (L_{\tilde o}\circ\tilde\sigma)_+^\prime(0)$ and $(L_o\circ\sigma)_-^\prime(d(p,q)) \geq (L_{\tilde o}\circ\tilde\sigma)_-^\prime(d(p,q))$.  Therefore,  $\measuredangle opq \leq \measuredangle \tilde o\tilde p\tilde q$ and $\measuredangle oqp\leq \measuredangle \tilde o \tilde q\tilde p$ because of how  the sides of $\triangle opq$ are chosen as described in Section 
 \ref{s:tri} .
\end{proof}
 
\subsection{Examples}\label{x:cut}

The assumption that the side $\sigma$ joining $p$ to $q$ in $\triangle opq$ has a 2--sided derivative at all interior points cannot be removed from the hypothesis of   Theorem \ref{t:TCT}.

Let $\widetilde M$ be the Euclidean plane $\mathbf{R}^2$ with origin $\tilde o = (0,0)$, and let $M = S^1 \times \mathbf{R}$ be the flat cylinder with origin $o = ((1,0),0)$.  
(We regard $S^1$ as unit circle in $\mathbf{R}^2)$. Clearly $(\widetilde M, \tilde o)$ has stronger radial attraction than $(M,o)$. Let $p= ((\frac {-1}{\sqrt 2}, \frac 1 {\sqrt 2}),0)$ and $q= ((\frac {-1}{\sqrt 2}, \frac {-1} {\sqrt 2}),0)$. The side $\sigma$ of $\triangle opq$  joining $p$ to $q$ passes through $((-1,0),0)\in C(o)$. 
Then 
\begin{equation}
L_o(\sigma(t)) = \left\{ 
\begin{array}{ccc}
 \frac {3\pi}4 + t &\mathrm{if}& 0 \leq t \leq \frac \pi 4  \\
  \frac{5\pi}4-t&\mathrm{if}  &    \frac \pi 4 \leq t \leq \frac \pi 2
\end{array}
\right.
\end{equation}
which does not have a 2--sided derivative at $t= \frac {\pi}4$. The corresponding triangle $\triangle \tilde o\tilde p\tilde q$ has
 $\tilde p = (\frac{3\pi}4,0)$ and  $\tilde q =(\frac {7\pi}{12}, \frac {\sqrt 2 \pi}{3})$. Thus $\tilde\sigma(t) = \tilde p + t (\tilde q - \tilde p)/\frac \pi 2$.
Therefore $L_{\tilde o}(\tilde\sigma(t) = \sqrt{ ( \frac{3\pi}4 -\frac t 3)^2 + \frac{8t^2}9} $ for $ 0 \leq t \leq \frac \pi 2$. One easily checks that $L_{\tilde o}(\tilde\sigma(t)) < L_o(\sigma(t))$ for $0<t<\frac \pi 2$, which is the wrong inequality.  

This example can be generalized whenever there exists a cut point of $o$ with distance less than $\ell$. For let $r < \ell$ be the distance from $o$ to its nearest cut point.  Using \cite[Lemma 5.6]{CE}, there exists a minimal geodesic loop $\gamma$ based at $o$ such that $\gamma(r)$ is the nearest cut point to $o$.  Clearly, we can pick a $ \delta >0$ sufficiently small such that if $p=\gamma(r-\delta)$ and $q=\gamma(r+\delta)$, then (\ref{e:exist}) holds, and $\sigma(t) = \gamma(t+r-\delta)$ for $ 0\leq t\leq 2\delta$.  Thus
\begin{equation}
L_o(\sigma(t))= \left\{
\begin{array}{cc}
   r-\delta + t& 0\leq t \leq \delta     \\
 r+\delta-t  & \delta\leq t \leq 2\delta  
\end{array}\right.
.
\end{equation}
It would be impossible for any side $\tilde\sigma$ in a corresponding triangle $\triangle\tilde o\tilde p\tilde q$ to satisfy $L_{\tilde o}(\tilde\sigma(t))\geq L_o(\sigma(t))$,   because otherwise $(L_{\tilde o}\circ \tilde \sigma)^\prime(0)=1$ which shows that $\tilde \sigma$ lies along meridians of $\widetilde M$.  But, $\tilde \sigma$ has to join two points on the same line of latitude at distance $r-\delta$ from $\tilde o$. If $\ell = \infty$, this is clearly impossible, while if $\ell$ is finite it is also impossible since $\tilde \sigma$  has to pass through the antipodal vertex to $\tilde o$ and so have length 
$$2( \ell - r + \delta) > 2\delta.$$
Whereas  the length of $\tilde \sigma$ is $2\delta$.

This leads to the following conclusion:

\begin{proposition}\label{p:inj}
Suppose $(\widetilde M, \tilde o)$ has stronger radial attraction than $(M,o)$.  If for every triangle $\triangle opq$ satisfying Equation (\ref{e:exist}), $$ L_o\circ \sigma (t) \leq  L_{\tilde o} \circ \tilde \sigma(t)$$
 for all $0\leq t \leq d(p,q)$, then the distance from $o$ to the nearest  cut point is  at least $\ell$. In other words, the injectivity radius of $M$ at $o$ is greater than or equal to $\ell$.
\end{proposition}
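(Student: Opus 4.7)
The plan is to argue by contradiction, generalizing the construction indicated in the example immediately preceding the proposition. Suppose that the distance from $o$ to its nearest cut point is some $r < \ell$. Since $r < \ell$, Proposition \ref{p:ccp} guarantees that the nearest cut point is nonconjugate, and then by \cite[Lemma 5.6]{CE} there exists a unit speed geodesic loop $\gamma : [0, 2r] \to M$ based at $o$ with $\gamma(r)$ a nearest cut point, so that both halves $\gamma|_{[0,r]}$ and $t \mapsto \gamma(2r - t)$ minimize from $o$ to $\gamma(r)$.

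For sufficiently small $\delta > 0$, I would set $p = \gamma(r - \delta)$, $q = \gamma(r + \delta)$, and take the side $\sigma$ of $\triangle opq$ joining $p$ to $q$ to be $\sigma(t) = \gamma(t + r - \delta)$ on $[0, 2\delta]$. Local nonconjugacy around $\gamma(r)$ ensures that $\sigma$ is a minimizing geodesic of length $2\delta$, while the loop construction gives $d(o, p) = d(o, q) = r - \delta$ and
\[
L_o \circ \sigma(t) = \begin{cases} r - \delta + t & \text{if } 0 \le t \le \delta, \\ r + \delta - t & \text{if } \delta \le t \le 2\delta, \end{cases}
\]
which attains the value $r$ at $t = \delta$. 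Since $D_\pi(r,r)$ is a fixed positive number and $D_\pi$ is continuous, (\ref{e:exist}) is satisfied for all small enough $\delta$, so the corresponding triangle $\triangle \tilde o \tilde p \tilde q$ exists with $\tilde p, \tilde q$ both at distance $r - \delta$ from $\tilde o$ and $d(\tilde p, \tilde q) = 2\delta$.

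By the standing hypothesis of the proposition, $L_{\tilde o} \circ \tilde \sigma(\delta) \ge L_o \circ \sigma(\delta) = r$. The triangle inequality in $\widetilde M$ gives $L_{\tilde o} \circ \tilde \sigma(\delta) \le L_{\tilde o}(\tilde p) + \delta = r$ and $L_{\tilde o} \circ \tilde \sigma(\delta) \le L_{\tilde o}(\tilde q) + \delta = r$, so both are equalities. Equality in each forces the concatenation of the radial meridian from $\tilde o$ to $\tilde p$ (respectively $\tilde q$) with $\tilde\sigma|_{[0,\delta]}$ (respectively the reverse of $\tilde\sigma|_{[\delta,2\delta]}$) to be a minimizing path from $\tilde o$ to $\tilde\sigma(\delta)$. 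Since $L_{\tilde o}(\tilde \sigma(\delta)) = r < \ell$, the minimizer from $\tilde o$ to $\tilde \sigma(\delta)$ is a unique meridian, so $\tilde p$ and $\tilde q$ must both lie on this common meridian; but each meridian carries a unique point at each distance less than $\ell$, so $\tilde p = \tilde q$, contradicting $d(\tilde p, \tilde q) = 2\delta > 0$.

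I expect the most delicate point to be the equality-in-triangle-inequality step: one must justify cleanly that equality forces $\tilde p$, $\tilde \sigma(\delta)$, $\tilde q$ to lie on a common meridian, which rests on the uniqueness of minimizing geodesics from $\tilde o$ to points in the open ball of radius $\ell$ in the model. The remaining ingredients (verification of (\ref{e:exist}) for small $\delta$, minimality of $\sigma$ on $[0, 2\delta]$, and the fact that the loop $\gamma$ produced by \cite[Lemma 5.6]{CE} has total length exactly $2r$) are routine local consequences of the nonconjugacy of $\gamma(r)$ and the definition of nearest cut point.
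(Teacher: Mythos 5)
Your proof is correct and follows essentially the same route as the paper: it argues by contradiction via the minimal geodesic loop through the nearest cut point (Klingenberg's lemma), computes the tent-shaped formula for $L_o\circ\sigma$, and uses the hypothesis to force $\tilde\sigma$ into the degenerate radial position. The only real difference is in the endgame. The paper reads off $(L_{\tilde o}\circ\tilde\sigma)'_+(0)=1$ from the inequality $L_{\tilde o}\circ\tilde\sigma(t)\ge r-\delta+t$, concludes that $\tilde\sigma$ lies along meridians, and then obtains a contradiction from lengths (split into $\ell=\infty$ vs. $\ell<\infty$, the latter requiring $\tilde\sigma$ to pass through the antipodal vertex and have length $2(\ell-r+\delta)>2\delta$). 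You instead evaluate at the midpoint $t=\delta$, pin both triangle inequalities to equality, and invoke uniqueness of the radial minimizer from $\tilde o$ at distance $<\ell$ to force $\tilde p=\tilde q$. Your version is a bit cleaner, avoiding the case split on $\ell$, and you are also more careful than the paper in explicitly citing Proposition \ref{p:ccp} to rule out the conjugate alternative in Klingenberg's lemma before extracting the geodesic loop. Both arguments are sound.
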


\section{Injectivity and Convexity Radii}

\begin{definition} We will say that a subset $A$ of a complete Riemannian manifold is \emph{convex} if for all pairs of points $p$ and $q$ in $A$, every minimizing geodesic joining $p$ and $q$ in $M$ is contained in $A$.  We will say $A$ is \emph{strongly convex} if for all pairs of points $p$ and $q$ in $A$, there exists a unique minimizing geodesic joining $p$ and $q$, and that geodesic is contained in $A$. In other words, $A$ is strongly convex if $A$ is convex and every pair of points in $A$ are joined by a unique minimizing geodesic in  $M$.
\end{definition}

Recall that the injectivity radius of $M$ at a point $p\in M$ is defined by
$$inj_M(p)= \sup\{ r>0: \exp_p \mathrm{\ is\ a\  diffeomorphism\ on\ the\ ball\ of\ radius\ }r \mathrm{\ in\ }T_pM \}$$
If $A \subset M$, define
$$inj_M(A) = \inf \{ inj_M(p): p\in A\}.$$
Since the metric balls $B_r(o)=\{p: d(o,p)<r\}$ grow with increasing $r$, the function
$f(r) = inj_M(B_r(o))$ is a decreasing function in $r$. Clearly $f(0)= inj_M(o)$ and $\lim_{r\to\infty}f(r) = inj_M(M)$.
Thus there exists an $r^\ast=r^\ast(o)$ where $inj_M(B_{r^\ast}(o))= 2r^\ast$. Hence
$$\frac 12 inj_M(M) \leq r^\ast(o) \leq \frac 12inj_M(o).$$
Therefore if $r <r^\ast(o)$, we have $inj_M(B_r(o))> 2r$,  which implies that every pair of points $p$ and $q$ in $B_r(o)$ are joined by a unique minimizing geodesic in $M$ since $d(p,q)<d(o,p)+d(o,q) <2r$ by the triangle inequality.
 \begin{definition}
The \emph{convexity radius of $M$ at $p$} is defined to be
$$ conv_M(p)=\sup\{0<r<inj_M(p) :B_r(p) \mathrm{\ is\ strongly\ convex.}\}$$
\end{definition}

For further discussion of the convexity radius see \cite{JD}.

\begin{proposition}
If $(\widetilde M,\tilde o)$ has stronger radial attraction than $(M,o)$ and the geodesics in $M$ have no bad encounters in $\widetilde M$, then
$$ conv_M(o) \geq \min( conv_{\widetilde M}(\tilde o), r^\ast(o)).$$
\end{proposition}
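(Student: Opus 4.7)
Fix $r<r_0:=\min(\mathrm{conv}_{\widetilde M}(\tilde o),r^\ast(o))$; it suffices to show $B_r(o)$ is strongly convex. I will run a connectedness argument on
$$A=\{(p,q)\in B_r(o)\times B_r(o): \sigma_{pq}([0,d(p,q)])\subset B_r(o)\},$$
where $\sigma_{pq}$ is the unique minimizing geodesic from $p$ to $q$ in $M$. Uniqueness follows because $r<r^\ast(o)$ gives $\mathrm{inj}_M(B_r(o))> 2r>d(p,q)$, as noted just before the definition of $\mathrm{conv}_M$, so $\exp_p$ is a diffeomorphism past the ball of radius $d(p,q)$ in $T_pM$. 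The same bound yields $\overline{B_r(o)}\subset B_{r^\ast}(o)$, and for any $x\in\overline{B_r(o)}\setminus\{o\}$ we get $d(o,x)\leq r<r^\ast\leq\mathrm{inj}_M(x)$, forcing $x\notin C(o)$; in particular $L_o$ is smooth on $\overline{B_r(o)}\setminus\{o\}$.

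\textbf{Closedness of $A$.} The set $A$ is nonempty (contains the diagonal) and open by smoothness of $\exp$ together with compactness of $\sigma_{pq}$ and openness of $B_r(o)$. For closedness, let $(p_n,q_n)\in A$ converge to $(p,q)$; after reparametrizing to $[0,1]$, $\sigma_{p_n q_n}\to\sigma_{pq}$ uniformly, so $\sigma_{pq}\subset\overline{B_r(o)}$. By the preceding paragraph, $\sigma_{pq}$ avoids $C(o)$ and $L_o\circ\sigma_{pq}$ is smooth. Apply Lemma \ref{l:S} with $X=\sigma_{pq}'$; using $\nabla^2L_o(\sigma',\sigma')=(L_o\circ\sigma)''$ and expanding $S$ as in Section \ref{s:TFS}, the inequality $S(\sigma',\sigma')\leq 0$ reads
$$(L_o\circ\sigma_{pq})''(t)\;\geq\;\frac{y'(L_o\circ\sigma_{pq}(t))}{y(L_o\circ\sigma_{pq}(t))}\Bigl(1-\bigl((L_o\circ\sigma_{pq})'(t)\bigr)^2\Bigr).$$
The bracketed factor is nonnegative by Remark \ref{r:<1}, and $y'>0$ on $[0,r]$ because $r<\mathrm{conv}_{\widetilde M}(\tilde o)$: the model Hessian $\nabla^2 L_{\tilde o}=(y'/y)(g-dL_{\tilde o}\otimes dL_{\tilde o})$ forces the convexity radius at $\tilde o$ to be bounded by the first zero of $y'$, for otherwise $L_{\tilde o}$ degenerates tangentially at that sphere and the slightly larger model ball loses strong convexity. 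Hence $(L_o\circ\sigma_{pq})''\geq 0$, so $L_o\circ\sigma_{pq}$ is convex, attains its maximum at an endpoint, and
$$L_o(\sigma_{pq}(t))\;\leq\;\max(L_o(p),L_o(q))\;<\;r.$$
Therefore $\sigma_{pq}\subset B_r(o)$ strictly, $(p,q)\in A$, and $A$ is closed.

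\textbf{Conclusion and main obstacle.} Since $B_r(o)\times B_r(o)$ is connected and $A$ is nonempty, open, and closed, $A$ is everything, so $B_r(o)$ is strongly convex, and letting $r\nearrow r_0$ gives the proposition. The most delicate ingredient is the positivity $y'>0$ on $[0,\mathrm{conv}_{\widetilde M}(\tilde o))$; a careful proof would likely exhibit an explicit pair of points in the model ball of radius just past the first zero of $y'$ whose minimizing geodesic exits that ball, confirming $\mathrm{conv}_{\widetilde M}(\tilde o)\leq$ the first zero of $y'$. Everything else is a direct repackaging of the Hessian inequality encoded in Lemma \ref{l:S}, and stronger radial attraction enters only through that lemma. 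Note that the ``no bad encounters'' hypothesis is not actually used in this route; it would become relevant only in an alternative argument invoking Theorem \ref{t:TCT} directly on $\triangle opq$, which would then require a delicate bootstrap to establish the two-sided derivative condition along $\sigma_{pq}$ without already knowing $\sigma_{pq}\subset B_r(o)$.
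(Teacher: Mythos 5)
Your argument takes a genuinely different route from the paper's. The paper passes the geodesic $\sigma$ to the model through Theorem \ref{t:TCT} (which needs the ``no bad encounters'' hypothesis), obtains $L_o\circ\sigma(t)\leq L_{\tilde o}\circ\tilde\sigma(t)$, and then invokes the strong convexity of $B_r(\tilde o)$ \emph{as a black box} to get $L_{\tilde o}\circ\tilde\sigma(t)<r$. You instead try to deduce convexity of $L_o\circ\sigma$ directly from the second--derivative inequality encoded in $S\le 0$, and your connectedness/bootstrap argument to keep $\sigma_{pq}$ inside $\overline{B_r(o)}$ (so that you only need information about $y'$ on $[0,r]$ rather than on $[0,2r]$) is a nice idea.

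However, there is a genuine gap at exactly the point you flag: the step $(L_o\circ\sigma)''\geq 0$ requires $y'\geq 0$ on $[0,r]$, and you attempt to get this from $r<\mathrm{conv}_{\widetilde M}(\tilde o)$ by asserting that the convexity radius at $\tilde o$ is bounded by the first zero of $y'$. That assertion does \emph{not} follow from the Hessian formula $\nabla^2 L_{\tilde o}=(y'/y)(g-dL_{\tilde o}\otimes dL_{\tilde o})$. The Hessian formula gives the easy direction (``$y'\geq 0$ on $[0,r]$ implies $B_r(\tilde o)$ convex''), but you need the converse. The converse is not local: having $\nabla^2 L_{\tilde o}$ indefinite somewhere in the ball does not by itself produce a \emph{minimizing} geodesic between two points of the ball that leaves the ball. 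If $y'(\rho_0)=0$, the circle $r=\rho_0$ is a closed geodesic, and a geodesic tangent to a nearby circle at radius $s>\rho_0$ moves \emph{inward} (Clairaut: $y(r)\geq y(s)$), so the obvious local test point does not exit $B_{\rho_0+\epsilon}(\tilde o)$; whether strong convexity actually fails just past $\rho_0$ depends on a global comparison (arc of the equator versus meridian through $\tilde o$ versus meridian through the antipode, plus uniqueness), which you do not carry out. Without that argument the proof is incomplete. The paper sidesteps the issue entirely by never decoding strong convexity of $B_r(\tilde o)$ into a statement about $y'$.

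Two smaller points. First, you observe that your route never uses ``no bad encounters''; if the $y'$ claim were established, this would indeed be a strengthening of the proposition, which is a good reason to be suspicious that the missing step is not as soft as the sketch suggests. Second, the case where $\sigma_{pq}$ passes through $o$ (so that $L_o\circ\sigma_{pq}$ is only piecewise smooth and Lemma \ref{l:S} does not apply at that point) should be treated separately; it is harmless since there $L_o\circ\sigma_{pq}(t)=|t-t_0|$ which still attains its max at an endpoint, but it should be said.
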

\begin{proof}
Let $r <\min( conv_{\widetilde M}(\tilde o), r^\ast(o))$, and let $p,q\in B_r(o)$. Since $r < r^\ast(o)$ there exists a unique minimizing geodesic $\sigma$ joining $p$ and $q$. Since $ d(p,q)<2r$, the length of $\sigma$ is less than $2r$. Thus for
every point $\sigma(t)$, either $d(p,\sigma(t))<r$ or $d(q,\sigma(t))<r$ so again by the triangle inequality
$$ d(o,\sigma(t)) <2r <2r^\ast(o) \leq inj_{M}(o).$$
Therefore $\sigma$ does not intersect $C(o)$, and thus $L_o\circ\sigma(t)$ has a 2--sided derivative for all $0<t<d(p,q)$.

Since $r < conv_{\widetilde M}(\tilde o)$, the ball $B_r(\tilde o)$ is strongly convex.  It follows that
$$D_\pi(L_o(p),L_o(q)) =  L_o(p)+L_o(q),$$ and therefore $d(p,q) \leq D_\pi(L_o(p), L_o(q))$ by the triangle inequality.  As $conv_{\widetilde M}(\tilde o)\leq \ell$ we also have $d(o,p)<\ell$ and $d(o,q) < \ell$.  

Applying Theorem \ref{t:TCT}, there exists a corresponding triangle $\triangle \tilde o\tilde p\tilde q$ to $\triangle opq$ such that
$L_o\circ\sigma(t) \leq L_{\tilde o}\circ \tilde \sigma(t)$ for all $0\leq t \leq d(p,q)$.  But the strong convexity of $B_r(\tilde o)$ implies that $L_{\tilde o}\circ\tilde\sigma(t) < r$. Thus $L_o\circ\sigma(t) <r$. We conclude that $B_r(o)$ is strongly convex.
This completes the proof.
\end{proof}

\section{Radial curvature bounded above}\label{s:curvature}

\begin{definition}
Given a pointed Riemannian manifold $(M,o)$ and model surface $(\widetilde M, \tilde o)$, $(M,o)$ has \emph{radial curvature bounded from above} by $(\widetilde M, \tilde o)$ if for every unit speed geodesic $\gamma$ emanating from $o$, the sectional curvature of every 2--plane $X\wedge \gamma^\prime(r)$, where $X\perp \gamma^\prime(r)$, is less than or equal to $\kappa(r)$ for every $0<r<\ell$ where $\kappa(r) = -\frac{y^{\prime\prime}(r)}{y(r)}$ is the Gaussian curvature function of $\widetilde M$.
\end{definition}
 
\subsection{The Hessian Comparison}
\begin{proposition}\label{p:HCT}
If $(M,o)$ has radial curvature bounded from above by $(\widetilde M,\tilde o)$, then
$$S(X,X) \leq 0$$
for all vector fields $X$ on  $(M\backslash (\{o\} \cup C(o)))  \cap \{p : L_o(p) <\ell\}$ where $S$ is the tensor field defined in  Section \ref{s:TFS}. 
\end{proposition}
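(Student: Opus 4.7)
My plan is to reduce the inequality $S(X,X) \leq 0$ to the standard Hessian comparison theorem for distance functions under an upper curvature bound, and then establish that comparison by a Sturm argument applied to the length of a Jacobi field along the radial geodesic.

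First I would reduce to the directions perpendicular to $\xi = \mathrm{grad}(L_o)$. Because the integral curves of $\xi$ are unit-speed geodesics one has $\nabla_\xi\xi = 0$, and consequently $\nabla^2 L_o(\xi,Y) = \langle\nabla_\xi\xi, Y\rangle = 0$ for every $Y$; the tensorial term $\frac{y'\circ L_o}{y\circ L_o}(g - dL_o\otimes dL_o)$ also vanishes when one slot is $\xi$. Thus $S(\xi,Y) = 0$, and decomposing $X = dL_o(X)\xi + X_\perp$ with $X_\perp\perp\xi$ gives $S(X,X) = S(X_\perp,X_\perp)$. The proposition therefore follows once we show, for every $p$ in the domain of $S$ and every unit $X_\perp\in T_pM$ with $X_\perp\perp\xi_p$,
$$\nabla^2 L_o(X_\perp,X_\perp) \geq \frac{y'(r)}{y(r)}, \qquad r = L_o(p).$$

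Next I would interpret the Hessian via a Jacobi field. Let $\gamma:[0,r]\to M$ be the unit-speed minimizing geodesic from $o$ to $p$, which is unique and free of conjugate points because $p\notin C(o)$. Let $J$ be the unique Jacobi field along $\gamma$ with $J(0)=0$ and $J(r)=X_\perp$; one checks $J\perp\gamma'$ throughout. Realizing $J$ as the variation field of a one-parameter family of geodesics from $o$ to points of a short curve through $p$ with initial velocity $X_\perp$, the standard identification of the Hessian of a distance function with the second fundamental form of the geodesic sphere yields
$$\nabla^2 L_o(X_\perp,X_\perp) = \langle J'(r), J(r)\rangle.$$

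Finally I would compare $u(t) := |J(t)|$ with the model quantity $y(t)$ by a Sturm argument. Using the Jacobi equation $J'' = -R(J,\gamma')\gamma'$, a direct differentiation and the Cauchy--Schwarz bound $\langle J',J\rangle^2 \leq |J'|^2|J|^2$ give
$$u''(t) \geq -K(t)\, u(t),$$
where $K(t) = \langle R(J,\gamma')\gamma',J\rangle/|J|^2$ is the sectional curvature of the 2-plane $J(t)\wedge\gamma'(t)$. The upper radial curvature hypothesis gives $K(t) \leq \kappa(t) = -y''(t)/y(t)$, hence $u'' + \kappa u \geq 0$ on $(0,r]$, where $u > 0$ because $\gamma$ is free of conjugate points. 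Set $v(t) = |J'(0)|\,y(t)$, so that $v'' + \kappa v = 0$, $v(0) = u(0) = 0$, and $v'(0) = |J'(0)| = u'(0)$. The Wronskian $W = u'v - uv'$ satisfies $W(0)=0$ and $W' = (u''+\kappa u)v \geq 0$ on $(0,r]$, so $W(r)\geq 0$. Since $u,v>0$ on $(0,r]$, this yields $u'(r)/u(r) \geq v'(r)/v(r) = y'(r)/y(r)$, and multiplying by $u(r)^2 = |J(r)|^2$ gives
$$\langle J'(r), J(r)\rangle = u(r)u'(r) \geq \frac{y'(r)}{y(r)}|J(r)|^2 = \frac{y'(r)}{y(r)}.$$

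The main technical point is the Sturm/Wronskian step, which depends on the positivity of $u$ on $(0,r]$; that positivity follows from the nonconjugacy of $p$ along $\gamma$, itself a consequence of $p\notin C(o)$ together with $r < \ell$. The possibility that $y'(r)/y(r)$ is negative past the first maximum of $y$ causes no trouble, as the argument is a comparison between two functions satisfying the same linear second-order differential operator at the model.
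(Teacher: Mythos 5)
Your proof is correct but takes a genuinely different route from the paper's. The paper's proof is a two-sentence citation: it invokes the Hessian Comparison Theorem of Greene and Wu \cite[Theorem~A]{GW} with $f=L_o$ and $(N,p)=(\widetilde M^n,\tilde o^n)$, remarking that although that theorem is stated for manifolds with a pole, its proof only needs the radial geodesics to be free of conjugate points. Your proposal, by contrast, unwinds and reproves the content of that theorem directly. The reduction $S(X,X)=S(X_\perp,X_\perp)$, the identification $\nabla^2 L_o(X_\perp,X_\perp)=\langle J'(r),J(r)\rangle$ via the Jacobi field with $J(0)=0$ and $J(r)=X_\perp$, the inequality $u''\geq -K(t)u$ for $u=|J|$ via Cauchy--Schwarz, and the Wronskian monotonicity $W=u'v-uv'$ with $v=|J'(0)|\,y$ satisfying $W(0)=0$, $W'\geq 0$ — these are exactly the ingredients of the standard proof of the Hessian comparison under an upper sectional curvature bound. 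Your argument is therefore more self-contained, and it is also slightly cleaner on the conjugate-point issue: you observe directly that nonconjugacy along the minimizing geodesic from $o$ to $p$ is automatic from $p\notin C(o)$ (and $y>0$ on $(0,\ell)$ handles the model side), whereas the paper cites Proposition~\ref{p:ccp}, which is stated under the stronger-radial-attraction hypothesis rather than the radial-curvature hypothesis of the present proposition. What the citation approach buys is brevity and a clear pointer to the literature; what your direct approach buys is an argument whose hypotheses (only nonconjugacy of radial geodesics) are visibly exactly what is needed, with no appeal to a pole.
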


\begin{proof}
This follows by applying the Hessian Comparison Theorem \cite[Theorem A]{GW} with $f=L_o$, $(M,o)=(M,o)$ and $(N,p)= (\widetilde M^n,\tilde o^n)$.  Although the hypothesis in  \cite[Theorem A]{GW} is that the manifolds have a pole, the proof only requires that the geodesics be free of conjugate points. This holds in this case on account  of Proposition \ref{p:ccp}. 
\end{proof}

\subsection{Partial Converse to Lemma \ref{l:S}}

\begin{proposition}\label{p:CS}
Given the pointed Riemannian manifold $(M,o)$ and model space $(\widetilde M,\tilde o)$, if $$S(X,X) \leq 0$$
for all vector fields $X$ on  $(M\backslash (\{o\} \cup C(o)) ) \cap \{p : L_o(p) <\ell\}$ where $S$ is the tensor field defined in  Section \ref{s:TFS} and if $M$ is real analytic, then $(\widetilde M,\tilde o)$ has stronger radial attraction than $(M,o)$.
\end{proposition}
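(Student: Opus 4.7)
The plan is to translate the hypothesis $S(X,X)\le 0$ into a second-order differential inequality for $u(t):=L_o\circ\sigma(t)$ along the geodesic, compare it with the exact ODE satisfied by $\tilde u(t):=L_{\tilde o}\circ\tilde\sigma(t)$ in the model, and then read off the sign of $v:=u-\tilde u$ from its first nonvanishing Taylor coefficient using real analyticity. Since $\nabla^2L_{\tilde o}=\frac{y'\circ L_{\tilde o}}{y\circ L_{\tilde o}}(g-dL_{\tilde o}\otimes dL_{\tilde o})$ in the model, applying this tensor to the unit tangent $\tilde\sigma'$ gives
\[ \tilde u''(t)=F\bigl(\tilde u(t),\tilde u'(t)\bigr),\qquad F(z,p):=\frac{y'(z)}{y(z)}(1-p^2), \]
while the same calculation combined with $S(\sigma'(t),\sigma'(t))\le 0$ yields, at any $t$ with $\sigma(t)\in(M\setminus(\{o\}\cup C(o)))\cap\{L_o<\ell\}$, the one-sided inequality $u''(t)\ge F(u(t),u'(t))$. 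By Remark \ref{r:<1} we may assume $-1<u'_+(0)<1$, and in particular $\sigma(0)\ne o$.

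Case A, where $\sigma(0)\notin C(o)$: real analyticity of $M$ makes $L_o$ real analytic off $\{o\}\cup C(o)$, so $u$ is real analytic on a right-neighborhood of $0$, and $\tilde u$ is analytic there as well. Hence $v$ is analytic with $v(0)=v'(0)=0$. If $v\equiv 0$ near $0$ we are done; otherwise $v(t)=c\,t^k+O(t^{k+1})$ with $k\ge 2$ and $c\ne 0$, so $v''(t)=ck(k-1)t^{k-2}+O(t^{k-1})$. Taylor-expanding $F$ around $(\tilde u(t),\tilde u'(t))$ gives
\[ F(u,u')-F(\tilde u,\tilde u')=F_p(\tilde u,\tilde u')\,v'+F_z(\tilde u,\tilde u')\,v+O\bigl(v^2+(v')^2\bigr)=O(t^{k-1}). \]
Dividing the inequality $v''\ge F(u,u')-F(\tilde u,\tilde u')$ by $t^{k-2}$ and sending $t\to 0^+$ forces $ck(k-1)\ge 0$, hence $c>0$, and therefore $v>0$ on a punctured right-neighborhood of $0$.

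Case B, where $\sigma(0)\in C(o)$: the Hessian-comparison argument in the proof of Proposition \ref{p:ccp} depends only on $S\le 0$, so it still rules out conjugate cut points at distance less than $\ell$; hence $\sigma(0)$ is a nonconjugate cut point. Lemma \ref{l:rays} applied to the ray $\sigma$ then supplies a $\delta>0$ on which $\sigma$ avoids $C(o)$, and together with real analyticity of $M$ it ensures that $u$ extends real-analytically to $[0,\delta)$ from the right, coinciding with the analytic sheet of $L_o$ determined by the minimizing geodesic from $o$ to $\sigma(0)$ selected in Lemma \ref{l:derivative}. In particular $u(0)=\tilde u(0)$ and $u'(0^+)=\tilde u'(0)$, and the Case A leading-coefficient argument applies verbatim to this one-sided analytic extension, yielding $v\ge 0$ on a right-neighborhood of $0$.

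The main obstacle lies in Case B: the local real-analytic behavior of $L_o\circ\sigma$ at a nonconjugate cut point is not automatic from analyticity of the metric and is precisely the input that Lemma \ref{l:rays}, singled out in the introduction as new to the literature, is designed to supply.
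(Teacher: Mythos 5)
Your reduction to the scalar Riccati inequality $u'' \geq F(u,u')$ versus $\tilde u'' = F(\tilde u,\tilde u')$ with $F(z,p)=\frac{y'(z)}{y(z)}(1-p^2)$ is a correct restatement of $S\leq 0$, and it is a genuinely different route from the paper's, which instead perturbs the model to $\widetilde M_\delta$ and invokes Corollary~\ref{c:UE}. But both of your cases have gaps. In Case~A, the dichotomy ``$v\equiv 0$ or $v=ct^k+O(t^{k+1})$ with $c\neq 0$'' requires $v=u-\tilde u$ to be real analytic. You correctly note that $u$ is analytic because $M$ is, but the model $\widetilde M$ is only assumed to be $C^\infty$ in this paper, so $\tilde u$ (a solution of an ODE with $C^\infty$ coefficients) and hence $v$ need not be analytic; a $C^\infty$ function with $v(0)=v'(0)=0$ can vanish to infinite order at $0$ without being identically zero, and then the leading-coefficient argument says nothing. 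The conclusion $v\geq 0$ is still true, but it needs a different mechanism --- for instance writing $F(u,u')-F(\tilde u,\tilde u')=A(t)\,v+B(t)\,v'$ with $A,B$ continuous (integral form of the mean value theorem) and running a Wronskian/Gronwall comparison for the resulting linear inequality $v''\geq A\,v+B\,v'$, or following the paper's $\delta$-perturbation of the model.

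In Case~B, the assertion that Lemma~\ref{l:rays} ``supplies a $\delta>0$ on which $\sigma$ avoids $C(o)$'' is false: Lemma~\ref{l:rays} only asserts existence of a two-sided derivative of $L_o\circ\sigma$, and in case~(3) of its proof (illustrated by a ray tangent to the cut arc of a prolate ellipsoid) $\sigma(t)$ remains in $C(o)$ for an entire interval of small $t$. What the proof of Lemma~\ref{l:rays} actually supplies is $L_o\circ\sigma=f_i\circ\sigma$ for an analytic sheet $f_i$ with $i\in I^\ast$, which does make $u$ analytic; but then the inequality $u''\geq F(u,u')$ is not directly available for $t>0$ with $\sigma(t)\in C(o)$, since $S$ is undefined there. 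One must pass to the limit tensor $\overline S_{\sigma(t)}$ along the $i$-th sheet, as the paper does, to conclude $\nabla^2 f_i(\sigma',\sigma')\geq F(f_i\circ\sigma,(f_i\circ\sigma)')$ along that portion of $\sigma$. Your ``applies verbatim'' step silently skips both of these points, so Case~B as written does not go through.
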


\begin{proof}
Let $\sigma$ and $\tilde\sigma$ be geodesics in $M$ and $\widetilde M$ respectively such that $L_o\circ \sigma(0) = L_{\tilde o}\circ \tilde \sigma(0)<\ell$ and $(L_o\circ \sigma)^\prime_+(0) = (L_{\tilde o}\circ \tilde \sigma)^\prime_+(0)$. 
By Remark \ref{r:<1} we may assume that $|(L_o\circ \sigma)^\prime_+(0)| <1$. 

Set $p=\sigma(0)$
and $\tilde p = \tilde\sigma(0)$, and set $X=\sigma^\prime(0)$ and $\tilde X=\tilde\sigma^\prime(0)$. 
Now there exists a $b>0$ such that $L_o\circ \sigma(t)$ has a 2-sided derivative for all $0<t<b$.  This is true if $p \notin C(o)$
by Remark \ref{r:2sided},  because  there exists a $b>0$ such that $\sigma(t) \notin C(o)$ for $0\leq t \leq b$, and is also true if 
$p\in C(o)$ by Lemma \ref{l:rays} below, because $p$ is a nonconjugate cut point by  Proposition \ref{p:ccp}.

The tensor field $S$ is defined on the set $(M\backslash( \{o\} \cup C(o)) ) \cap \{p : L_o(p) <\ell\}$.  If $p \in  C(o)  \cap \{p : L_o(p) <\ell\}$, one can extend $S$ to a symmetric $(2,0)$ tensor $\overline S_p$ on $T_pM$ by taking the limit of $S$ along a minimizing geodesic joining $o$ to $p$.  Of course the limit depends on which minimizing geodesic one uses.
If, in the notation of Lemma \ref{l:rays}, one uses the geodesic indexed by $i$, then one has
\begin{equation}
\overline S_p =\frac{y^\prime\circ L_o(p)}{y\circ L_o(p)} \left(g_p - (df_i)_p\otimes (df_i)_p\right)  -(\nabla^2f_i)_p
\end{equation} 
where $g$ is the Riemannian metric on $M$.  Since $\overline S_p$ is the limit of negative symmetric tensors, we have
$\overline S_p(X,X) \leq 0$.

Now it is  known that $(L_o\circ\sigma)^{\prime\prime}(0) = \nabla^2L_o(X,X)$ and $(L_{\tilde o}\circ\tilde\sigma)^{\prime\prime}(0)=\nabla^2L_{\tilde o}(\tilde X,\tilde X)$. (See for example \cite[(2.11)]{HI1}.) Since $S(X,X) \leq 0$, it follows that
\begin{equation}\label{e:secondderivative}
(L_{\tilde o}\circ\tilde\sigma)^{\prime\prime}(0)=\nabla^2L_{\tilde o}(\tilde X,\tilde X)\leq\nabla^2L_o(X,X)=(L_o\circ\sigma)^{\prime\prime}(0).
\end{equation}
When $p \in C(o)$, by the proof of Lemma \ref{l:rays} there is an index $i$ such that $L_o\circ \sigma = f_i\circ\sigma$ and Equation (\ref{e:secondderivative}) becomes
\begin{equation}\label{e:cutsecondderivative}
(L_{\tilde o}\circ\tilde\sigma)^{\prime\prime}(0)=\nabla^2L_{\tilde o}(\tilde X,\tilde X)\leq\nabla^2f_i(X,X)=(L_o\circ\sigma)^{\prime\prime}(0)
\end{equation}
because $\overline S_p(X,X) \leq 0$.  If there is strict inequality in (\ref{e:secondderivative}) or (\ref{e:cutsecondderivative}),
calculus implies that there exists an $\epsilon >0$ such that
$$ L_{\tilde o}\circ \tilde\sigma(t) < L_o\circ\sigma(t)$$
for all $0<t<\epsilon$, since $L_{\tilde o}\circ \tilde\sigma(t)$ and  $ L_o\circ\sigma(t)$ agree to first order at $0$, but  
$(L_{\tilde o}\circ\tilde\sigma)^{\prime\prime}(0)<(L_o\circ\sigma)^{\prime\prime}(0)$.  

Next assume that $S(X,X)$ or $\overline S_p(X,X)$ equals  $0$ so that   $(L_{\tilde o}\circ\tilde\sigma)^{\prime\prime}(0)=(L_o\circ\sigma)^{\prime\prime}(0)$.  Fix $r_1$ and $r_2$ such that
$$ 0<r_1< d(o,p) < r_2 <\ell.$$ 

\begin{lemma}\label{l:delta}
 For all  sufficiently small $\delta>0$ there exists a model surface $(\widetilde M_\delta,\tilde o_\delta)$ 
with metric
$$ds^2_\delta = dr^2 + y_\delta^2(r)d\theta^2$$
for $(r,\theta) \in (0,\ell-\delta)\times[0,2\pi]$
such that
$$ \frac {y_\delta^\prime(r)}{y_\delta(r)} \leq \frac {y^\prime(r)}{y(r)}$$
for all $0<r<\ell-\delta$ and
$$ \frac {y_\delta^\prime(r)}{y_\delta(r)} +\delta = \frac {y^\prime(r)}{y(r)}$$
for $r_1<r<r_2$ where $y(r)$ is the function defining the metric on $\widetilde M$.
Furthermore, as $\delta \rightarrow 0^+$, $\widetilde M_\delta$ converges to $\widetilde M$ uniformly on $(0,R)\times [0,2\pi]$ for any fixed $R<\ell$. 
\end{lemma}

\begin{proof}
We construct $\widetilde M_\delta$ using the method found in \cite[Section 6]{HI1}.  

If $\ell=\infty$, define $y_\delta(r) = m_\delta(r) y(r)$ for $ 0\leq r$ where
$$
m_\delta(r) = \left\{\begin{array}{ccc} \varphi(r) + (1-\varphi)e^{-\delta r}&\mathrm{if} & 0 \leq r \leq r_1\\ e^{-\delta r}&\mathrm{if}& r_1\leq r \end{array}     \right.
$$
and $\varphi: [0,\infty)\rightarrow [0,1]$ is a smooth decreasing function which is $1$ in a neighborhood of $0$ and $0$ for $r\geq r_1$.
 Using \cite[Lemma 6.1]{HI1}, one easily checks that $y_\delta$ has the desired properties. Indeed, by construction,
 $$  \frac {y_\delta^\prime(r)}{y_\delta(r)} +\delta = \frac {y^\prime(r)}{y(r)}$$
for all $r>r_1$, not just for $r_1<r<r_2$.
 
The case $\ell < \infty$ requires a modification in the definition of $m_\delta$ to ensure that $y_\delta(\ell-\delta)=0$ and $y_\delta^\prime(\ell-\delta)=-1$.  For each  $\delta>0$,  there exists an $r_3=r_3(\delta)$ such that  $\frac {\ell-\delta-r}{y(\ell-\delta)} < e^{-\delta r}$ for all $r_3 < r  < \ell-\delta$. Clearly $r_3(\delta)$ approaches $\ell$ as $\delta$ approaches $0$ because ${y(\ell-\delta})$ approaches $0$ as well. Thus $ r_3(\delta) > r_2$ for all sufficiently small $\delta$. Let's say for $\delta <\delta^\ast$. Let $\varphi$ be defined as before, and define $\psi_\delta:[0,\infty) \rightarrow [0,1]$ to be a smooth decreasing function such that $\psi_\delta(r) = 1$ for $0<r< r_3(\delta)$,  is $1$ in a neighborhood of $r_3$ and is $0$ in a neighborhood of $\ell-\delta$.
For every $ \delta < \delta^\ast$, define
$y_\delta(t) = m_\delta(r) y(r)$ for $ 0 \leq r <\ell-\delta$ where
$$
m_\delta(r) = \left\{\begin{array}{ccc} \varphi(r) + (1-\varphi(r))e^{-\delta r}&\mathrm{if} & 0 \leq r \leq r_1\\ \psi_\delta(r)e^{-\delta r}+(1-\psi_\delta(r))\frac {\ell-\delta-r}{y(\ell-\delta)}&\mathrm{if}& r_1\leq r \leq \ell-\delta\end{array}.     \right.
$$
Using \cite[Lemma 6.1]{HI1}, one easily checks that $y_\delta$ has the desired properties.  Indeed, by construction,
 $$  \frac {y_\delta^\prime(r)}{y_\delta(r)} +\delta = \frac {y^\prime(r)}{y(r)}$$
for all $r_1<r<r_3(\delta)$, not just for $r_1<r<r_2$.

By construction in either case, for any $R < \ell$, $m_\delta(r)$ converges to $1$ uniformly for $r \in [0,R]$ as $\delta$ approaches $0$. Therefore
$y_\delta(r)$ converges to $y(r)$ uniformly for $r \in [0,R]$.
 \end{proof}

Define the tensor field
$$S_\delta =  \frac{y_\delta^\prime\circ L_o}{y_\delta\circ L_o} \left(g - dL_o\otimes dL_o\right) - \nabla^2L_o$$
analogous to $S$. By construction, for all non--radial vectors  $Z$ tangent at a point $q$ in $(M\backslash (\{o\} \cup C(o)) ) \cap \{p : L_o(p) <\ell\}$ with $L_o(q) \in [r_1,r_2]$, we have
 $S_\delta(Z,Z) < S(Z,Z)$.
With no loss of generality we can suppose that $b$ has been chosen small enough so that $L_o\circ\sigma(t)$ lies in the interval $[r_1,r_2]$ for all $0\leq t\leq b$. Because $|(L_o\circ \sigma)^\prime_+(0)| <1$, $\sigma^\prime(t)$ is never radial.
Hence, the second derivative $(L_o\circ\sigma)^{\prime\prime}(t_0)$ is strictly greater than that of the geodesic in $\widetilde M_\delta$ that agrees with $\sigma$ to first order at $t_0$. We conclude that the hypothesis that $\widetilde M_\delta$ has stronger radial attraction than $M$ holds at every point along $\sigma$.

Let $\tilde\sigma_\delta$ be the geodesic in $\widetilde M_\delta$ such that $L_{\tilde o_\delta}\circ\tilde\sigma_\delta(0) =L_o\circ\sigma(0)$ and $(L_{\tilde o_\delta}\circ\tilde\sigma_\delta)_+^\prime(0) =(L_o\circ\sigma)_+^\prime(0)$.
Set $\tilde p_\delta = \tilde\sigma_\delta(0)$.  Since $\widetilde M_\delta$ converges to $\widetilde M$, $inj_{\widetilde M_\delta}(\tilde p_\delta)$ converges to $inj_{\widetilde M}(\tilde p)$. Hence for all sufficiently small $\delta$ we have 
$inj_{\widetilde M_\delta}(\tilde p_\delta) \geq \frac 12 inj_{\widetilde M}(\tilde p)$. Since $\delta<\frac 1 2(\ell-r_2)$, we have
$\ell-\delta > \frac 12(\ell+r_2)-L_o(p)$.  Thus for all sufficiently small $\delta$ we have
$$ \min\{ b, \frac 12(\ell+r_2)-L_o(p), \frac 12 inj_{\widetilde M}(\tilde p)\} \leq \min\{b,\ell-\delta, inj_{\widetilde M_\delta}(\tilde p_\delta) \}.$$
Applying Corollary \ref{c:UE}, we have if $\epsilon < \min\{ b, \frac 12(\ell+r_2)-L_o(p), \frac 12 inj_{\widetilde M}(\tilde p)\}$
then $L_o\circ \sigma(t) \geq L_{\tilde o}\circ \tilde\sigma_\delta(t)$ for all $0 \leq t < \epsilon$.  Note that  $\epsilon$ is independent of $\delta$ for sufficiently small $\delta$.  Letting $\delta \rightarrow 0^+$, $\tilde\sigma_\delta(t)$ converges to
$\tilde\sigma(t)$. Therefore $L_o\circ \sigma(t) \geq L_{\tilde o}\circ \tilde\sigma(t)$ for all $0 \leq t < \epsilon$. 
This completes the proof.
\end{proof}

Combining Propositions \ref{p:HCT} and \ref{p:CS} gives the following result:

\begin{corollary}\label{c:CS}
If $(M,o)$ has radial curvature bounded from above by $(\widetilde M,\tilde o)$, and $M$ is real analytic, then
 $(\widetilde M,\tilde o)$ has stronger radial attraction than $(M,o)$.  
\end{corollary}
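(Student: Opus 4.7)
The corollary is a direct concatenation of the two preceding propositions, so my plan is essentially to chain them, while being careful that the hypotheses of each actually match what the previous step delivers.

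First, I would verify that the radial curvature upper bound licenses us to apply Proposition \ref{p:HCT}. The hypothesis of Corollary \ref{c:CS} is precisely the hypothesis of Proposition \ref{p:HCT}: every unit speed geodesic $\gamma$ emanating from $o$ satisfies $\sec(X\wedge \gamma'(r))\leq \kappa(r)$ for $X\perp\gamma'(r)$ and $0<r<\ell$. Applying Proposition \ref{p:HCT} then yields $S(X,X)\leq 0$ for every vector field $X$ on $(M\setminus(\{o\}\cup C(o)))\cap\{p:L_o(p)<\ell\}$, where $S$ is the tensor defined in Section \ref{s:TFS}. It is worth noting that the extension of the Hessian comparison machinery of \cite[Theorem A]{GW} to this setting relies on the absence of conjugate cut points within distance $\ell$, which is guaranteed by Proposition \ref{p:ccp} under stronger radial attraction --- but in the proof of Proposition \ref{p:HCT} this is in fact used as an input coming from the curvature bound via the standard Jacobi field comparison, so there is no circularity here.

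Next, I would feed this conclusion into Proposition \ref{p:CS}. That proposition has exactly two hypotheses: the pointwise bound $S(X,X)\leq 0$ on the same domain, and the real analyticity of $M$. Both are now in hand --- the first from the previous step and the second from the hypothesis of the corollary. The conclusion of Proposition \ref{p:CS} is precisely that $(\widetilde M,\tilde o)$ has stronger radial attraction than $(M,o)$, which is what we wanted.

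In other words, the proof is the single line: apply Proposition \ref{p:HCT} to obtain $S\leq 0$, then apply Proposition \ref{p:CS} to convert this infinitesimal inequality into the desired comparison between $L_o\circ\sigma$ and $L_{\tilde o}\circ\tilde\sigma$. There is no real obstacle at this stage; the substantive work --- in particular the use of real analyticity through Lemma \ref{l:rays} to handle nonconjugate cut points, and the perturbation argument with the auxiliary models $(\widetilde M_\delta,\tilde o_\delta)$ to strictify the inequality --- has already been done inside Proposition \ref{p:CS}. The only thing to check is that the domain on which $S\leq 0$ holds in the two propositions is literally the same set, which it is by inspection.
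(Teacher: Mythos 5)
Your proof is correct and takes the same route as the paper: the paper's entire proof of Corollary \ref{c:CS} is the one-line remark ``Combining Propositions \ref{p:HCT} and \ref{p:CS} gives the following result,'' and you do exactly that.

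Your aside about circularity is worth keeping, because it in fact repairs a questionable citation in the paper. The printed proof of Proposition \ref{p:HCT} justifies freedom from conjugate points by invoking Proposition \ref{p:ccp}, but that proposition assumes stronger radial attraction --- the very thing the corollary is trying to establish. You correctly observe that the absence of conjugate points along radial geodesics within distance $\ell$ is a direct consequence of the radial curvature upper bound itself, via the standard Jacobi field (Rauch) comparison: a radial Jacobi field $J$ with $J(0)=0$ satisfies $|J(t)|\geq y(t)>0$ for $0<t<\ell$, so no conjugate point can occur before $\ell$. This makes the application of \cite[Theorem A]{GW} in Proposition \ref{p:HCT} self-contained and removes any appearance of circular reasoning in the chain leading to Corollary \ref{c:CS}.
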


It is an open question whether real analyticity can be removed from the hypothesis in Corollary \ref{c:CS}.

\subsection{Example} In general, if $(\widetilde M,\tilde o)$ has stronger radial attraction than $(M,o)$, then  the radial curvature of $(\widetilde M, \tilde o)$ does not necessarily  bound that  of  $(M,o)$ from above.  Given a rotationally symmetric surface, Example 1 in \cite[Section 6]{HI1} describes how to construct a model which has stronger radial attraction than that of the given surface, yet  does not bound its radial curvature from above.   
 
\section{Volume and Eigenvalue Comparison}

Under the assumption of stronger radial attraction we prove lower bounds on the volume and on the first eigenvalue of the Laplacian for the Dirichlet problem of geodesic balls about $o$.  

\begin{theorem}\label{t:vec}
Suppose that $(\widetilde M, \tilde o)$ has stronger radial attraction than $(M,o)$ where $n = \dim(M)$. Let $ \rho <\min\{ inj_M(o), \ell\}$.
Let $B_\rho(o)$ be the geodesic ball of radius $\rho$ in $M$ and $B_\rho(\tilde o^n)$ be the geodesic ball in the $n$--dimensional model $(\widetilde M^n, \tilde o^n)$ corresponding to $(\widetilde M, \tilde o)$. Then
$$ Vol(B _\rho(o)) \geq Vol(B _\rho(\tilde o^n))$$
where $Vol$ is the $n$--dimensional volume of the respective geodesic balls,
and
$$\lambda_1(B _\rho(o)) \geq \lambda_1(B _\rho(\tilde o^n))$$
where $\lambda_1$ is the first eigenvalue of the Laplacian for the Dirichlet problem on the respective geodesic balls.
\end{theorem}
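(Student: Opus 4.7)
The plan is to deduce both inequalities from a single pointwise Laplacian comparison for $L_o$ obtained by tracing the bound $S \leq 0$ from Lemma \ref{l:S}. Because $\rho < \min\{inj_M(o), \ell\}$, the punctured ball $B_\rho(o)\setminus\{o\}$ lies inside $(M\setminus(\{o\}\cup C(o)))\cap\{L_o<\ell\}$, so $S(X,X)\leq 0$ holds pointwise there; choosing an orthonormal frame $e_1,\dots,e_{n-1},\xi$ with $\xi=\nabla L_o$ and summing $S(e_i,e_i)\leq 0$ gives
$$\Delta L_o \;\geq\; (n-1)\,\frac{y'(L_o)}{y(L_o)} \qquad\text{on }B_\rho(o)\setminus\{o\}.$$

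For the volume inequality, I would pass to geodesic spherical coordinates $(r,\theta)\in(0,\rho)\times S^{n-1}$ on $B_\rho(o)\setminus\{o\}$, available because $\rho<inj_M(o)$. Writing the volume element as $\mathcal{J}(r,\theta)\,dr\,d\theta_{n-1}$, the standard identity $\Delta L_o=\partial_r\log\mathcal{J}$ combined with the above comparison yields $\partial_r\log(\mathcal{J}(r,\theta)/y(r)^{n-1})\geq 0$. The asymptotics $\mathcal{J}(r,\theta)\sim r^{n-1}$ and $y(r)^{n-1}\sim r^{n-1}$ as $r\to 0^+$ force the ratio to tend to $1$, so $\mathcal{J}(r,\theta)\geq y(r)^{n-1}$ on $(0,\rho)\times S^{n-1}$. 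Integrating over $S^{n-1}$ and then over $r\in(0,\rho)$, and comparing with the analogous expression for the model $(\widetilde M^n,\tilde o^n)$, gives $Vol(B_\rho(o))\geq Vol(B_\rho(\tilde o^n))$.

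For the eigenvalue inequality I would build a test function from the model. Let $\tilde\lambda=\lambda_1(B_\rho(\tilde o^n))$ and let $\tilde u$ be the associated positive first Dirichlet eigenfunction on $B_\rho(\tilde o^n)$. By rotational symmetry of $(\widetilde M^n,\tilde o^n)$ and uniqueness, $\tilde u$ is radial and satisfies
$$\tilde u''(r)+(n-1)\frac{y'(r)}{y(r)}\tilde u'(r)+\tilde\lambda\,\tilde u(r)=0,\quad \tilde u'(0)=\tilde u(\rho)=0,\quad \tilde u>0\text{ on }[0,\rho),$$
and a standard Sturm/maximum-principle argument shows $\tilde u'\leq 0$ on $[0,\rho]$. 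Define $u:=\tilde u\circ L_o$ on $B_\rho(o)$. Since $\tilde u$ extends to a smooth even function of $r$ and $\exp_o$ is a diffeomorphism on the ball of radius $\rho$ in $T_oM$, the function $u$ is smooth on $B_\rho(o)$, vanishes on $\partial B_\rho(o)$, and is positive in the interior. On $B_\rho(o)\setminus\{o\}$ we compute
$$-\Delta u=-\tilde u''(L_o)-\tilde u'(L_o)\,\Delta L_o\;\geq\;-\tilde u''(L_o)-(n-1)\tilde u'(L_o)\frac{y'(L_o)}{y(L_o)}\;=\;\tilde\lambda\,u,$$
where the inequality uses $\tilde u'\leq 0$ together with the Laplacian comparison. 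Letting $\phi$ be the positive first Dirichlet eigenfunction of $B_\rho(o)$ with eigenvalue $\lambda_1(B_\rho(o))$, Green's identity (with $u$ and $\phi$ both vanishing on $\partial B_\rho(o)$) gives $\int\phi(-\Delta u)=\int u(-\Delta\phi)=\lambda_1(B_\rho(o))\int u\phi$, while the pointwise inequality, integrated against $\phi>0$, yields $\int\phi(-\Delta u)\geq\tilde\lambda\int u\phi$. Since $\int u\phi>0$, the conclusion $\lambda_1(B_\rho(o))\geq\tilde\lambda$ follows.

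The main obstacle is bookkeeping rather than genuine difficulty: verifying that Lemma \ref{l:S} applies pointwise throughout $B_\rho(o)\setminus\{o\}$ (which is where the hypothesis $\rho<inj_M(o)$ is used, to miss $C(o)$), confirming the monotonicity $\tilde u'\leq 0$ for the radial eigenfunction of the model, and checking the smoothness of $u=\tilde u\circ L_o$ through the origin $o$ so that Green's identity applies cleanly on all of $B_\rho(o)$ without an auxiliary cutoff around $o$.
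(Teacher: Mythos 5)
Your proposal is correct and follows essentially the same route as the paper: both arguments trace $S\leq 0$ from Lemma \ref{l:S} to obtain the Laplacian comparison $\Delta L_o \geq (n-1)y'(L_o)/y(L_o)$, use it to compare the radial volume densities $\alpha(r,X)$ and $y^{n-1}(r)$ (with the matching asymptotics at $r=0$ to fix the constant), and then test with the radial model eigenfunction pulled back via $L_o$. The only difference is cosmetic: where you carry out the Green's identity pairing against the first eigenfunction of $B_\rho(o)$ to conclude $\lambda_1(B_\rho(o))\geq\lambda_1(B_\rho(\tilde o^n))$, the paper cites the Barta-type inequality from Chavel (\cite[Lemma 3.3]{IC}), which encapsulates exactly that computation; you have simply unpacked the citation.
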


The proof is contained in a sequence of lemmas and remarks.

\begin{lemma}\label{l:volform}
There is a $C^\infty$ function $\alpha: (0,\rho)\times S^{n-1} \to (0,\infty)$ such that  the Riemannian volume form $ \nu_M$  of $M$  takes the form
$$ \nu_M =\alpha(r, X) dr\wedge\nu_{S^{n-1}}$$
in geodesic spherical coordinates about $o$ in $B_\rho(o)$,
where $\nu_{S^{n-1}}$ denotes the $(n-1)$--dimensional volume form of the unit $(n-1)$--sphere $S^{n-1}$.
Moreover $ \lim_{r\to 0^+}\frac {\alpha(r,X)}{r^{n-1}}=1$.
\end{lemma}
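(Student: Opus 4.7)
The plan is to use the fact that $\rho<\inj_M(o)$ to pass to geodesic spherical coordinates via the exponential map, and then to compute the resulting volume form using Jacobi fields along radial geodesics.

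First, fix a linear isometry $T_oM \cong \mathbb{R}^n$, which identifies the set of unit vectors in $T_oM$ with $S^{n-1}$. Because $\rho<\inj_M(o)$, the map $\Phi:(0,\rho)\times S^{n-1}\to B_\rho(o)\setminus\{o\}$ defined by $\Phi(r,X)=\exp_o(rX)$ is a smooth diffeomorphism. Since any $n$-form on the product manifold is a smooth function times $dr\wedge\pi^*\nu_{S^{n-1}}$ (where $\pi$ is projection to $S^{n-1}$), we may write $\Phi^*\nu_M=\alpha(r,X)\,dr\wedge\pi^*\nu_{S^{n-1}}$ with $\alpha$ smooth; the function $\alpha$ is positive because $\Phi$ preserves orientation (or one simply takes $|\alpha|$).

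Next, I would express $\alpha$ through Jacobi fields. Fix $X\in S^{n-1}$, let $\gamma(r)=\exp_o(rX)$, and choose an orthonormal basis $E_1,\dots,E_{n-1}$ of $X^\perp\subset T_oM$. Let $J_i$ be the Jacobi field along $\gamma$ with $J_i(0)=0$ and $J_i^\prime(0)=E_i$. The standard formula for the differential of $\exp_o$ at $rX$ gives
$$d\Phi|_{(r,X)}(\partial_r)=\gamma^\prime(r),\qquad d\Phi|_{(r,X)}(\widetilde E_i)=J_i(r),$$
where $\widetilde E_1,\dots,\widetilde E_{n-1}$ is the orthonormal frame on $T_XS^{n-1}$ corresponding to $E_1,\dots,E_{n-1}$ under the identification $T_XS^{n-1}\cong X^\perp$. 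Gauss's lemma gives $\langle\gamma^\prime(r),J_i(r)\rangle=0$, so the Gram matrix of $d\Phi|_{(r,X)}$ in the bases $\{\partial_r,\widetilde E_1,\dots,\widetilde E_{n-1}\}$ and any orthonormal frame of $T_{\Phi(r,X)}M$ is block diagonal with blocks $(1)$ and $(\langle J_i(r),J_j(r)\rangle)_{i,j}$. Consequently
$$\alpha(r,X)=\sqrt{\det\bigl(\langle J_i(r),J_j(r)\rangle\bigr)_{1\leq i,j\leq n-1}},$$
which is clearly smooth in $(r,X)$ and extends smoothly across $r=0$ thanks to the smooth dependence of Jacobi fields on initial data.

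Finally, for the limit, Taylor-expand each Jacobi field at $r=0$. Writing $P_i(r)$ for the parallel transport of $E_i$ along $\gamma$, the initial conditions $J_i(0)=0$, $J_i^\prime(0)=E_i$, $J_i^{\prime\prime}(0)=0$ (the last holds because $J_i^{\prime\prime}(0)=-R(J_i(0),\gamma^\prime(0))\gamma^\prime(0)=0$) yield $J_i(r)=rP_i(r)+O(r^3)$, so $\langle J_i(r),J_j(r)\rangle=r^2\delta_{ij}+O(r^4)$. Hence $\det\bigl(\langle J_i(r),J_j(r)\rangle\bigr)=r^{2(n-1)}\bigl(1+O(r^2)\bigr)$, which gives $\alpha(r,X)/r^{n-1}\to 1$ as $r\to 0^+$. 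The main obstacle is nothing deep --- just keeping the identifications between $T_XS^{n-1}$, $X^\perp\subset T_oM$, and the frame along $\gamma$ consistent so that $\pi^*\nu_{S^{n-1}}$ corresponds to the standard volume form built from $\widetilde E_1,\dots,\widetilde E_{n-1}$.
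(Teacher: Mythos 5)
Your proof is correct, but it takes a more computational route than the paper does. After the shared first step (writing $\alpha$ as the ratio of $E^\ast\nu_M$ to $dr\wedge\nu_{S^{n-1}}$ via the diffeomorphism $E(r,X)=\exp_o(rX)$), you derive the explicit formula $\alpha(r,X)=\sqrt{\det\bigl(\langle J_i(r),J_j(r)\rangle\bigr)}$ from the Jacobi-field description of $d\exp_o$ and Gauss's lemma, and then Taylor-expand $J_i(r)=rP_i(r)+O(r^3)$ to extract the limit $\alpha(r,X)/r^{n-1}\to 1$. The paper instead bypasses Jacobi fields entirely: it rewrites $E^\ast\nu_M$ as $\frac{\alpha(r,X)}{r^{n-1}}\cdot r^{n-1}dr\wedge\nu_{S^{n-1}}=\exp_o^\ast\nu_M$, observes that $r^{n-1}dr\wedge\nu_{S^{n-1}}$ is the Euclidean volume element of $T_oM$ in spherical coordinates, and concludes the limit directly from the fact that $\exp_o^\ast\nu_M$ coincides with that Euclidean volume element at the origin (since $d\exp_o$ is the identity there). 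Your version is longer but yields the useful closed-form $\alpha=\sqrt{\det\langle J_i,J_j\rangle}$, which makes the behavior of $\alpha$ transparent and would also feed directly into Lemma \ref{l:VolElement}; the paper's version is shorter and needs no Jacobi-field machinery. One small presentational point: you should be explicit that $d\Phi_{(r,X)}(\widetilde E_i)=d(\exp_o)_{rX}(rE_i)=J_i(r)$ already has the factor of $r$ folded in via $J_i(r)=r\,d(\exp_o)_{rX}(E_i)$, so there is no missing $r$; as stated, a reader might worry about a factor discrepancy.
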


\begin{proof}
The spherical coordinates are obtained via the diffeomorphism 
$$E: (0,\rho)\times S^{n-1} \rightarrow B_\rho(o)\backslash\{o\}$$ 
defined by $E(r,X)=\exp_o(rX)$.  Since $ dr\wedge \nu_{S^{n-1}}$ is a nonvanishing $n$--form there is a function $\alpha$ such that the pull--back
$E^\ast(\nu_M)= \alpha(r,X) dr\wedge \nu_{S^{n-1}}$. Moreover,  $r^{n-1}dr\wedge \omega_{S^{n-1}}$ is the volume element in spherical coordinates in the tangent space $T_oM$, and hence
$$\frac{\alpha(r,X)}{r^{n-1}} r^{n-1} dr\wedge \omega_{S^{n-1}} = \exp_o^\ast(\nu_M).$$
Finally, because $\exp_o^\ast(\nu_M)$ at the origin is equal to the volume element of $T_oM$,
$$\lim_{r\to 0^+}\frac {\alpha(r,X)}{r^{n-1}}=1.$$
\end{proof}

\begin{remark}\label{r:ModelVolForm}
In spherical coordinates about $\tilde o^n$ in the $n$--dimensional model corresponding to $(\widetilde M, \tilde o)$, the metric takes the form 
$$ds^2 = dr^2 + y^2(r) d\theta^2_{n-1},$$
where $d\theta^2_{n-1}$ is the standard metric on the unit $(n-1)$--sphere. Thus
$$\nu_{\widetilde M^n}= y^{n-1}(r) dr\wedge \nu_{S^{n-1}}.$$
\end{remark}

\begin{lemma} \label{l:VolElement}
For all $(r,X)\in (0,\rho)\times S^{n-1}$,
$$ \frac {\partial_r \alpha(r,X)}{\alpha(r,X)} \geq (n-1)\frac{y^\prime(r)}{y(r)}.$$
Consequently 
$$ \alpha(r,X) \geq y^{n-1}(r).$$
\end{lemma}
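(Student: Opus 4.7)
The plan is to identify $\partial_r\alpha/\alpha$ with a trace of the Hessian of $L_o$ and then invoke Lemma \ref{l:S} pointwise.

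First I would observe that $\xi = \operatorname{grad}L_o = \partial_r$ in the geodesic spherical coordinates on $B_\rho(o)\setminus\{o\}$ (which is possible since $\rho<\min\{\operatorname{inj}_M(o),\ell\}$, so there are no cut points in $B_\rho(o)$ and $S$ is defined throughout). Since $\nu_{S^{n-1}}$ is independent of $r$, the Lie derivative gives
\begin{equation*}
\mathcal{L}_\xi \nu_M = \mathcal{L}_{\partial_r}\bigl(\alpha\, dr\wedge\nu_{S^{n-1}}\bigr) = (\partial_r\alpha)\,dr\wedge\nu_{S^{n-1}} = \frac{\partial_r\alpha}{\alpha}\,\nu_M.
\end{equation*}
On the other hand $\mathcal{L}_\xi \nu_M = (\operatorname{div}\xi)\,\nu_M = (\Delta L_o)\,\nu_M$. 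Since $\xi$ is a geodesic vector field, $\nabla_\xi\xi=0$, so $\nabla^2 L_o(\xi,\xi)=0$, and therefore $\Delta L_o$ equals the trace of $\nabla^2 L_o$ restricted to $\xi^\perp$.

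Next I would apply Lemma \ref{l:S} with unit tangent vectors $X$ perpendicular to $\xi$. From the formula for $S$ in Section \ref{s:TFS},
\begin{equation*}
0 \geq S(X,X) = \frac{y'\circ L_o}{y\circ L_o}\bigl(|X|^2 - \langle X,\xi\rangle^2\bigr) - \langle\nabla_X\xi,X\rangle = \frac{y'(r)}{y(r)} - \nabla^2 L_o(X,X),
\end{equation*}
so $\nabla^2 L_o(X,X) \geq y'(r)/y(r)$. Summing over an orthonormal basis of $\xi^\perp$ (which has dimension $n-1$) and combining with the identification above yields
\begin{equation*}
\frac{\partial_r\alpha(r,X)}{\alpha(r,X)} = \Delta L_o \geq (n-1)\,\frac{y'(r)}{y(r)},
\end{equation*}
which is the first inequality.

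For the consequence, I would compare $\alpha$ with $y^{n-1}$ by integration. The inequality just proved can be written as $\partial_r\log\alpha(r,X) \geq \partial_r\log y^{n-1}(r)$, so $r\mapsto \log\bigl(\alpha(r,X)/y^{n-1}(r)\bigr)$ is nondecreasing on $(0,\rho)$. From Lemma \ref{l:volform}, $\alpha(r,X)/r^{n-1}\to 1$ as $r\to 0^+$, and from $y(0)=0$, $y'(0)=1$ one has $y(r)/r\to 1$, so $y^{n-1}(r)/r^{n-1}\to 1$. Thus $\alpha(r,X)/y^{n-1}(r)\to 1$ as $r\to 0^+$, and monotonicity gives $\alpha(r,X)/y^{n-1}(r)\geq 1$, i.e., $\alpha(r,X)\geq y^{n-1}(r)$ for all $0<r<\rho$. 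The only point requiring care is the computation $\partial_r\alpha/\alpha = \Delta L_o$; everything else is either a direct application of Lemma \ref{l:S} or a straightforward ODE comparison with matching initial asymptotics.
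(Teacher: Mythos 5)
Your proof is correct and follows essentially the same route as the paper: identify $\partial_r\alpha/\alpha$ with $\Delta L_o$ via the divergence (or equivalently Lie derivative) formula, apply Lemma \ref{l:S} through the trace of $S$ to get $\Delta L_o \geq (n-1)y'/y$, and integrate the logarithmic-derivative inequality from near $0$. The only cosmetic differences are that you take the trace over $\xi^\perp$ (using $\nabla^2L_o(\xi,\xi)=0$) rather than tracing $S$ itself, and you make explicit the intermediate limit $y^{n-1}(r)/r^{n-1}\to 1$, which the paper leaves tacit.
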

\begin{proof}
On  one hand the Laplacian $\Delta L_o$  is equal to the trace of the Hessian $\nabla^2 L_o$.  Thus taking the trace of the tensor field $S$, we have by definition of $S$ and Lemma \ref{l:S},
\begin{equation}\label{e:LPC}
 \Delta L_o= tr(\nabla^2L_o) \geq (n-1)\frac{y^\prime(r)}{y(r)}.
 \end{equation}
On the other hand, $\Delta L_o$ is the divergence of the gradient of $L_o$. Now $grad(L_o) =\xi$, the unit radial vector field.
Thus 
$$ (\Delta L_o) \nu_M = d( i_\xi \nu_M) = d( \alpha \nu_{S^{n-1}}) = (\partial_r\alpha )dr\wedge \nu_{S^{n-1}}= \frac {\partial_r \alpha}{\alpha}\nu_M.$$
This shows that $\Delta L_o=\frac {\partial_r \alpha(r,X)}{\alpha(r,X)}$. 
Plugging this into (\ref{e:LPC}) gives the first inequality.
Notice that the terms in the inequality (\ref{e:LPC}) are logarithmic derivatives. Hence
$$\partial_r \log(\alpha) \geq \frac d{dr}\log (y^{n-1}).$$
If $0<r_0 < r<\rho$, integrating from $ r_0$ to $r$ gives
$$\log(\alpha(r,X))-\log(\alpha(r_0,X)) \geq \log(y^{n-1}(r))-\log(y^{n-1}(r_0)).$$
Using properties of the logarithm and some algebra we easily obtain
$$ \frac {\alpha(r,X)}{y^{n-1}(r)} \geq \frac{\alpha(r_0,X)}{y^{n-1}(r_0)}.$$
Now if we let $r_0$ approach $0$, the righthand side approaches $1$ by Lemma \ref{l:volform}.
Therefore,  for all $(r,X)$ with $ r< \rho$, we obtain
$$ \frac {\alpha(r,X)}{y^{n-1}(r)} \geq 1$$
which completes the proof.

\end{proof}

\begin{remark}
Therefore, applying Lemma \ref{l:VolElement} and Remark \ref{r:ModelVolForm},
\begin{eqnarray*}
Vol(B_\rho(o)) &=& \int_0^\rho\int_{X\in S^{n-1}} \alpha(r,X) dr\wedge \nu_{S^{n-1}}\cr &\geq & \int_0^\rho\int_{X\in S^{n-1}}y^{n-1}(r) dr\wedge \nu_{S^{n-1}}\cr&=& Vol(B_\rho(\tilde o)).
\end{eqnarray*}

\end{remark}

\begin{remark}\label{r:1stEigen}
Because of the rotational symmetry of the model space $\widetilde M^n$ about $\tilde o^n$, the eigenfunction of the first eigenvalue $\lambda = \lambda_1(B_\rho(\tilde o^n))$ for the Dirichlet problem on $B_\rho(\tilde o^n)$ are multiples of a function $\varphi$ that depends only on $r$ such that
$\Delta \varphi + \lambda\varphi =0$, $\varphi(\rho)=0$. Moreover it is known that $\lambda>0$ and that $\varphi(r)>0$ for $0<r<\rho$.
\end{remark}

\begin{lemma} \label{l:1stEigen}
For $0<r<\rho$,
$\varphi^\prime(r)<0$.
\end{lemma}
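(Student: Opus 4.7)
My plan is to exploit the fact that $\varphi$ is a radial function on the rotationally symmetric model, so the eigenvalue equation $\Delta\varphi+\lambda\varphi=0$ reduces to the ordinary differential equation
\[
\varphi''(r)+(n-1)\frac{y'(r)}{y(r)}\varphi'(r)+\lambda\,\varphi(r)=0
\]
on $(0,\rho)$, using the formula for the Laplacian of a radial function in geodesic spherical coordinates (which follows from Remark \ref{r:ModelVolForm}, since $\Delta L_{\tilde o^n}=(n-1)y'/y$).

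Multiplying through by $y^{n-1}(r)$ puts the equation into self-adjoint Sturm--Liouville form:
\[
\bigl(y^{n-1}(r)\,\varphi'(r)\bigr)' = -\lambda\,y^{n-1}(r)\,\varphi(r).
\]
The key step is then to track the sign of $y^{n-1}\varphi'$ on $(0,\rho)$. Since $\lambda>0$ and $\varphi(r)>0$ on $(0,\rho)$ (Remark \ref{r:1stEigen}) and $y(r)>0$ on $(0,\ell)\supset(0,\rho)$, the right-hand side is strictly negative, so $y^{n-1}\varphi'$ is strictly decreasing on $(0,\rho)$.

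To conclude, I need to pin down the initial value $\lim_{r\to 0^+}y^{n-1}(r)\varphi'(r)=0$. Two inputs give this: smoothness of $\varphi$ on all of $B_\rho(\tilde o^n)$ forces $\varphi$ (as a function of $r$) to extend as an even smooth function across $r=0$, so $\varphi'(0)=0$; and $y(0)=0$. Integrating the self-adjoint equation from $0$ to $r$ then yields
\[
y^{n-1}(r)\,\varphi'(r)=-\lambda\int_0^r y^{n-1}(s)\,\varphi(s)\,ds<0
\]
for every $r\in(0,\rho)$. Since $y^{n-1}(r)>0$ on $(0,\rho)$, dividing gives $\varphi'(r)<0$, as desired. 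The argument is essentially routine Sturm--Liouville reasoning; the only mild subtlety is justifying $\varphi'(0)=0$ via the smoothness of the radial eigenfunction at the pole, but this is standard.
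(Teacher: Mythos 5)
Your proof is correct and takes essentially the same route as the paper: both pass through the self-adjoint (Sturm--Liouville) form $\bigl(y^{n-1}\varphi'\bigr)' = -\lambda y^{n-1}\varphi$ and integrate from $0$ to $r$ to get $\varphi'(r)y^{n-1}(r) = -\lambda\int_0^r y^{n-1}\varphi\,ds < 0$. Your handling of the boundary term at $r=0$ is slightly more explicit than the paper's (you invoke $\varphi'(0)=0$ from smoothness at the pole in addition to $y(0)=0$), but the argument is the same.
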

\begin{proof}
Clearly $grad (\varphi (r) )= \varphi^\prime(r) \xi$. Hence
\begin{equation}\label{e:1stEigen}
(\Delta\varphi(r)) \nu_{\widetilde M^n} = d(\varphi^\prime(r)y^{n-1}(r) dr\wedge\nu_{S^{n-1}}) = \frac{\partial_r(\varphi^\prime(r)y^{n-1}(r))}{y^{n-1}(r)}\nu_{\widetilde M^n}.
\end{equation}
Thus by Remark \ref{r:1stEigen}
\begin{equation}\label{e:Eigen}
\partial_r(\varphi^\prime(r)y^{n-1}(r))+\lambda \varphi(r) {y^{n-1}(r)}=0.
\end{equation}
Therefore, integrating (\ref{e:Eigen}) from $0$ to $r$ we have
\begin{equation*}
\varphi^\prime(r)y^{n-1}(r) = -\lambda\int_0^r \varphi(r) {y^{n-1}(r)} dr < 0.
\end{equation*}
Because $\lambda>0$, $y(0)=0$ and $y(r)$ and $\varphi(r)$ are both positive on the open interval $(0, \rho)$.
It follows that $\varphi^\prime(r) < 0$.
\end{proof}

\begin{remark}\label{r:DeltaPhi}
After expanding the partial derivative in Equation  (\ref{e:1stEigen}) and rearranging the terms, we have the formula:
$$ \Delta \varphi(r) = \varphi^{\prime\prime}(r)+(n-1)\frac{y^\prime(r)}{y(r)}\varphi^\prime(r).$$
\end{remark}

\begin{lemma}\label{l:ratio}
Define the function $F$ on $B_\rho(o)$ by $F(r,X)= \varphi(r)$. Then
$$\frac {\Delta F}{F}(r,X) \leq \lambda$$
where $\lambda = \lambda_1(B_\rho(\tilde o))$ and $\varphi$ is as in Remark \ref{r:1stEigen}.
\end{lemma}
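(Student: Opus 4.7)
The plan is a direct computation of $\Delta F$ on $M$ in geodesic polar coordinates, followed by a comparison with the corresponding quantity on the model.

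First I would note that since $F$ depends only on $r = L_o$, we have $\mathrm{grad}(F) = \varphi'(r)\,\xi$, where $\xi$ is the unit radial field on $B_\rho(o)\setminus\{o\}$. Expanding the divergence by the product rule gives
\begin{equation*}
\Delta F \;=\; \mathrm{div}\bigl(\varphi'(r)\,\xi\bigr) \;=\; \varphi''(r) \;+\; \varphi'(r)\,\Delta L_o,
\end{equation*}
valid on the complement of $\{o\}\cup C(o)$ in $B_\rho(o)$; note that since $\rho<\mathrm{inj}_M(o)$, the cut locus plays no role inside $B_\rho(o)$.

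Next I would invoke Lemma \ref{l:VolElement}, which (via $\Delta L_o = \partial_r\alpha/\alpha$) gives the inequality $\Delta L_o \geq (n-1)\,y'(r)/y(r)$ pointwise in $B_\rho(o)\setminus\{o\}$. The crucial sign ingredient is Lemma \ref{l:1stEigen}: $\varphi'(r)<0$ for $0<r<\rho$. Multiplying the inequality for $\Delta L_o$ by the negative quantity $\varphi'(r)$ reverses the inequality, yielding
\begin{equation*}
\varphi'(r)\,\Delta L_o \;\leq\; (n-1)\frac{y'(r)}{y(r)}\,\varphi'(r).
\end{equation*}
Adding $\varphi''(r)$ to both sides and then using Remark \ref{r:DeltaPhi} to identify the right-hand side as the model Laplacian of $\varphi$, I would conclude
\begin{equation*}
\Delta F \;\leq\; \varphi''(r)+(n-1)\frac{y'(r)}{y(r)}\,\varphi'(r) \;=\; \Delta\varphi(r).
\end{equation*}
Finally, applying the eigenvalue equation $\Delta\varphi+\lambda\varphi=0$ from Remark \ref{r:1stEigen} gives $\Delta F \leq -\lambda\,\varphi(r) = -\lambda F(r,X)$, and dividing by the positive quantity $F$ produces the desired bound (yielding in fact $\Delta F/F \leq -\lambda \leq \lambda$).

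There is no real obstacle here, as every ingredient is already in place: the identity for $\Delta L_o$ comes from Section \ref{s:TFS} through Lemma \ref{l:VolElement}, the sign of $\varphi'$ from Lemma \ref{l:1stEigen}, the explicit form of $\Delta\varphi$ from Remark \ref{r:DeltaPhi}, and the eigenvalue equation from Remark \ref{r:1stEigen}. The only point requiring care is keeping track of the sign reversal when $\varphi'<0$ is multiplied against the Laplacian comparison; this reversal is precisely what converts the lower bound on $\Delta L_o$ into an upper bound on $\Delta F$, enabling the subsequent Rayleigh quotient comparison that is presumably used to deduce $\lambda_1(B_\rho(o))\geq\lambda_1(B_\rho(\tilde o^n))$ in Theorem \ref{t:vec}.
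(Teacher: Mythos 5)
Your proof is correct and takes essentially the same approach as the paper: both compute $\Delta F$ in spherical coordinates via the volume element $\alpha$, use $\varphi'<0$ from Lemma \ref{l:1stEigen} to reverse the Laplacian comparison of Lemma \ref{l:VolElement}, and then apply Remark \ref{r:DeltaPhi} and the eigenvalue equation. You also correctly observe that the argument actually yields the sharper bound $\Delta F/F \leq -\lambda$, which is what the subsequent remark invoking \cite[Lemma 3.3]{IC} really needs; the statement's ``$\leq \lambda$'' appears to be a sign slip in the paper.
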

\begin{proof}
\begin{eqnarray*}
\frac {\Delta F}{F}(r,X)&=& \frac{\partial_r(\alpha(r,X)\partial_rF(r,X))}{\alpha(r,X)F(r,X)}\\
&=& \frac 1 {F(r,X)}\left(\partial_r^2 F(r,X) + \frac{\partial_r\alpha(r,X)}{\alpha(r,X)}\partial_r F(r,X)\right)\\
&\leq& \frac 1 {\varphi(r)}\left(\varphi^{\prime\prime}(r)+(n-1)\frac{y^\prime(r)}{y(r)}\varphi^\prime(r)\right)\\
&=&\frac 1 {\varphi(r)}(\Delta \varphi(r,X))\\
&=&-\lambda.
\end{eqnarray*}
Note that $\varphi^\prime(r) = \partial_r F(r,X) <0$ by Lemma \ref{l:1stEigen}.
We also used  Lemma \ref{l:VolElement} and Remarks \ref{r:1stEigen} and \ref{r:DeltaPhi}.
\end{proof}

\begin{remark}
By \cite[Lemma 3.3]{IC} 
$$-\lambda_1 (B_\rho(o)) \leq \sup_{B_\rho(o)}\frac {\Delta F}{F}.$$
Combining this with Lemma \ref{l:ratio} gives
$$-\lambda_1 (B_\rho(o)) \leq \sup_{B_\rho(o)}\frac {\Delta F}{F} \leq -\lambda_1(B_\rho(\tilde o)).$$
This completes the proof of Theorem \ref{t:vec}.
\end{remark}

\section{Triangle pinching theorems}

The well known Sphere Theorem \cite[Theorem 6.1]{CE} asserts that a simply connected, complete Riemannian manifold whose sectional curvatures lie in the half open interval
$(\frac 14, 1]$ is homeomorphic to a sphere.  It has an analog in the context of weaker and stronger radial attraction.

\begin{theorem}\label{t:TPT} 
Let $(M,o)$ be a complete pointed Riemannian manifold. Let $(\widetilde M, \tilde o)$ and $(\widehat M,\hat o)$ be two model surfaces with
$\max\{ d(\tilde o,\tilde p) : \tilde p \in \widetilde M \} = \tilde \ell$ and $\max\{ d(\hat o,\hat p): \hat p \in \widehat M\} = \hat\ell$ both finite.
Assume that:
\begin{enumerate}
\item For every geodesic triangle $\triangle opq$ in $M$, there exists a corresponding triangle $\triangle \hat o\hat p\hat q$ in $\widehat M$
with same side lengths and satisfying $L_o(\sigma(t)) \geq L_{\hat o}( \hat\sigma(t))$ for $0\leq t \leq d(p,q)$,
\item For every geodesic  triangle $\triangle opq$ in $M$ for which there exists a corresponding triangle $\triangle \tilde o\tilde p\tilde q$ in $\widetilde M$ with the same side lengths, we have $L_o(\sigma(t)) \leq L_{\tilde o}(\tilde\sigma(t))$ for  $0\leq t \leq d(p,q)$,
\item And $\tilde \ell > \hat \ell - conv_{\widehat M}(\hat o^\prime)$ where $\hat o^\prime$ is the opposite vertex to $\hat o$.
\end{enumerate}
Then $M$ is homeomorphic to a sphere.
\end{theorem}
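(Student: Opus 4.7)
The plan is to exhibit $M$ as the union of two closed topological $n$-disks meeting along their common boundary $(n{-}1)$-sphere, from which $M$ is homeomorphic to $S^n$. First, I will use hypothesis~(1) and the existence criterion~(\ref{e:exist}) to conclude that $d(o,p) < \hat\ell$ for every $p \in M \setminus \{o\}$ (by forming $\triangle opq$ with any third point), giving $M \subset \overline{B_{\hat\ell}(o)}$, compact by Hopf--Rinow. By Remark~\ref{r:thin}, hypothesis~(2) implies $(\widetilde M,\tilde o)$ has stronger radial attraction than $(M,o)$, so Proposition~\ref{p:inj} yields $inj_M(o) \geq \tilde\ell$. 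Setting $R := \max_{q\in M}L_o(q)$, realized at some $p \in M$, the fact that $B_{\tilde\ell}(o)$ is contractible while $M$ is a closed manifold forces $R > \tilde\ell$.

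Put $c := conv_{\widehat M}(\hat o')$ and $\delta_0 := c - (\hat\ell - \tilde\ell)$, which is positive by hypothesis~(3). Fix $\epsilon \in (0,\delta_0)$ and define
$$D_1 := \overline{B_{\tilde\ell - \epsilon}(o)}, \quad D_2 := \{q \in M : L_o(q) \geq \tilde\ell - \epsilon\}, \quad \Sigma := L_o^{-1}(\tilde\ell - \epsilon),$$
so $M = D_1 \cup D_2$ with $D_1 \cap D_2 = \Sigma$. Because $\tilde\ell - \epsilon < inj_M(o)$, $\exp_o$ identifies $D_1$ with a closed Euclidean ball, whence $D_1 \cong D^n$ and $\Sigma \cong S^{n-1}$; the crux is to show $D_2 \cong D^n$.

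I will first prove $D_2$ is geodesically convex. Given $p',q \in D_2$ and any minimizing geodesic $\sigma$ from $p'$ to $q$ (a side of $\triangle op'q$), hypothesis~(1) produces a corresponding $\triangle \hat o\hat p'\hat q$ in $\widehat M$ with $L_o\circ\sigma \geq L_{\hat o}\circ\hat\sigma$. Using the identity $L_{\hat o} + L_{\hat o'} = \hat\ell$ on the rotationally symmetric surface $\widehat M$, the condition $L_o(p'), L_o(q) \geq \tilde\ell - \epsilon$ becomes $d(\hat o',\hat p'), d(\hat o',\hat q) \leq \hat\ell - \tilde\ell + \epsilon < c$, placing $\hat p',\hat q$ inside the strongly convex ball $B_c(\hat o')$. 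Hence $\hat\sigma \subset B_c(\hat o')$, and the standard convexity of $\tfrac12 d(\hat o',\cdot)^2$ along geodesics in a strongly convex ball gives $d(\hat o',\hat\sigma(t)) \leq \max(d(\hat o',\hat p'), d(\hat o',\hat q)) \leq \hat\ell - \tilde\ell + \epsilon$, so $L_{\hat o}(\hat\sigma(t)) \geq \tilde\ell - \epsilon$, and therefore $L_o(\sigma(t)) \geq \tilde\ell - \epsilon$, i.e., $\sigma \subset D_2$.

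Using this convexity, for each $q \in D_2 \setminus \{p\}$ there is a minimizing geodesic $\tau$ from $q$ to $p$ contained in $D_2$; applying Lemma~\ref{l:derivative} to $L_p$ and $\tau$ with the reversed geodesic $\tau^{-1} \in Geod(p,q)$ gives $(L_p\circ\tau)'_+(0) \leq \langle\tau'(0),-\tau'(0)\rangle = -1 < 0$, so no point of $D_2 \setminus \{p\}$ is a Grove--Shiohama critical point of $L_p$. A standard Grove--Shiohama isotopy argument, using a continuous toward-$p$ gradient-like vector field whose flow lines remain in $D_2$ by the convexity just established, then deformation-retracts $D_2$ onto $\{p\}$. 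Since a compact contractible $n$-manifold with boundary homeomorphic to $S^{n-1}$ is itself homeomorphic to $D^n$, we obtain $D_2 \cong D^n$, and gluing $D_1$ to $D_2$ along $\Sigma \cong S^{n-1}$ yields $M \cong S^n$. \emph{The main obstacle} will be the Grove--Shiohama step: arranging a continuous choice of the toward-$p$ direction across the cut locus of $p$ inside $D_2$, the same delicate point one confronts in the classical sphere theorem.
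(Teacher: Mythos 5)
Your decomposition $M = D_1\cup D_2$ with $D_1\cong D^n$ via $\exp_o$ is a sensible start, and the verification that $D_2$ is geodesically convex (using the identity $L_{\hat o}+L_{\hat o'}=\hat\ell$ on $\widehat M$ together with hypothesis (1)) is correct. The genuine gap is in the claim that $D_2\setminus\{p\}$ contains no Grove--Shiohama critical points of $L_p$. The computation $(L_p\circ\tau)'_+(0)=-1$, where $\tau$ is a minimizing geodesic from $q$ toward $p$, is trivially true for \emph{every} $q\neq p$ and every such $\tau$, and carries no information about criticality. By definition, $q$ is a regular point of $L_p$ precisely when there is a single direction $v\in T_qM$ making an angle strictly greater than $\pi/2$ with the initial direction of \emph{every} minimizing geodesic from $q$ to $p$ (equivalently, a direction in which $L_p$ strictly increases against all of $Geod(q,p)$). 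Your chosen direction $\tau'(0)$ makes angle $0$ with $\tau$ itself and therefore cannot serve as that $v$; you have, if anything, exhibited the wrong half-space. Nothing in what you wrote rules out a second minimizing geodesic from $q$ to $p$ leaving in a roughly opposite direction, which is exactly the configuration that makes $q$ critical. Without an argument that the initial directions of all minimizing geodesics from $q$ to $p$ lie in an open half-space, the Grove--Shiohama deformation does not start. (Two lesser points: $R\geq\tilde\ell$ is all one gets from contractibility of $B_{\tilde\ell}(o)$, not $R>\tilde\ell$, though $R\geq\tilde\ell>\tilde\ell-\epsilon$ is enough for your purposes; and the final step ``a compact contractible $n$-manifold with boundary $S^{n-1}$ is a disk'' is equivalent to the generalized Poincar\'e conjecture, a much heavier tool than the Grove--Shiohama isotopy lemma, which, when it applies, already yields $D_2\cong D^n$ without that input.)

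For comparison, the paper works throughout with $L_o$ rather than $L_p$ and never constructs $D_2$. It first shows, using hypothesis (1) and the strong convexity of the disk $\{L_{\hat o}\geq R\}$ about $\hat o'$, that the maximum of $L_o$ is attained at a \emph{unique} point $p$. Then---and this is the step your proposal has no substitute for---it rules out any other critical point $q$ of $L_o$ in $\{L_o\geq \hat\ell-conv_{\widehat M}(\hat o')\}$ by using that $p$ and $q$ are both critical for $L_o$ (so the sides of $\triangle opq$ can be chosen with $\measuredangle opq\leq\pi/2$ and $\measuredangle oqp\leq\pi/2$) and then invoking the generalized Toponogov angle comparison $\measuredangle\hat o\hat q\hat p\leq\measuredangle oqp$ from \cite[Theorem 1.3]{HI2}, which contradicts the geometry of the strictly convex disk about $\hat o'$. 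It is this angle comparison coming from the weaker-radial-attraction hypothesis (1), not the convexity of a superlevel set of $L_o$, that eliminates critical points; with $L_o$ having a single critical point away from $o$, the conclusion follows directly from \cite{MG}. If you wish to pursue the $L_p$ route, you will need a comparable angle-comparison argument to certify regularity, not the tautological derivative computation you gave.
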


\begin{remark}
By \cite[Theorem 1.3]{HI2}, condition (1) is equivalent to $(\widehat M,\hat o)$ having weaker radial attraction than $(M,o)$ and none of the geodesics in $M$ has a bad encounter with cut loci in $\widehat M$ for weaker radial attraction.  By Remark \ref{r:thin}, condition (2) implies that $(\widetilde M,\tilde o)$ has stronger radial attraction than $(M,o)$ but is not a consequence of stronger radial attraction on account of Example \ref{x:cut}. Condition (3) replaces the curvature pinching condition in the Sphere Theorem. For example, $\widehat M$ might be the  2--sphere of constant curvature 1 and $\widetilde M$ a constant curvature 2-sphere with curvature a little greater than $\frac 14$.
\end{remark}

\begin{proof}
The idea of the proof is to show that the distance function $L_o : M \rightarrow \mathbf{R}$ has a unique maximum and no other critical points.
It then follows that $M$ is homeomorphic to a sphere by \cite{MG}.

By Proposition \ref{p:inj}, condition (2) implies that $inj_M(o) \geq \tilde \ell > \hat \ell - conv_{\widehat M}(\hat o^\prime)$. Therefore the open ball 
$\{ p\in M : d(o,p)<\tilde \ell\}$ contains no critical points of $L_o$ and the set $\{ x \in M: d(o,x) \geq \tilde \ell\}$ is nonempty.
Thus if $R = \max \{L_o(x) : x \in M\} $, then $R \geq \tilde \ell > \hat \ell - conv_{\widehat M}(\hat o^\prime)$.  Note also $\hat \ell \geq R$ by \cite[Corollary 3.14]{HI2}.
By the definition of convexity radius, the ball $\{ \hat x \in\widehat M :  L_{\hat o}(\hat x) \geq r\}$ centered at $\hat o^\prime$ is strictly convex if $ r>\hat\ell - conv_{\widehat M}(\hat o^\prime)$.

We first argue that the maximum of $L_o$ is attained at a unique point of $M$.  Assume it is attained at two points $x_1$ and $x_2$, then consider the triangle $\triangle\hat o \hat x_1 \hat x_2$  corresponding to $\triangle ox_1x_2$ by hypothesis (1).  
By the strict convexity of the ball $\{ x \in \widehat M : d(\hat o, x) \geq R \}$ and because $L_o(\sigma(t)) \geq L_{\hat o}( \hat\sigma(t))$ for $0\leq t \leq d(x_1,x_2)$, we obtain the contradiction
 $$ R < d(\hat o, \hat\sigma(t)) \leq d(o, \sigma(t)) \leq R$$
 for $ 0 < t < d(x_1,x_2)$.  Therefore the maximum of $L_o$ is attained at a unique point $p \in M$.

Next we show that there are no critical points of $L_o$ in the set 
 $$\{ x \in M : d( o,x) \geq  \hat\ell - conv_{\widehat M}(\hat o^\prime)\}$$ other than $p$.  If there were, let $q$ be another critical point. Let $\sigma$ be a minimizing geodesic joining $p$ to $q$.  Since $p$ and $q$ are critical points of $L_o$, we may pick minimizing geodesics $\tau$ from $o$ to $ p$ and $\gamma$ from $o$ to $q$ so that $\measuredangle opq \leq \frac \pi 2$ and
 $\measuredangle oqp \leq \frac \pi 2$.
 Consider the geodesic triangle $\triangle opq$ and the corresponding triangle $\triangle \hat o\hat p\hat q$ obtained  by hypothesis (1).
 Because $ \hat\ell-conv_{\widehat M}(\hat o^\prime) \leq d( \hat o, \hat q) < d(\hat o, \hat p)$,  the geodesic $\hat \sigma$ is contained in the convex disk $\{ \hat x \in \widehat M : d(\hat o,\hat x) \geq d(\hat o, \hat q)\}$.  Thus its tangent vector $-\hat\sigma^\prime(d(p,q))$ at $\hat q$ points into the disk. On the other hand
 $\hat\gamma$ is a segment of the meridian starting at $\hat o$. Thus its tangent vector  $-\hat\gamma^\prime(d(o,q))$ at $\hat q$  points out of the disk and is in fact perpendicular to the boundary of the disk.  Therefore
  $\measuredangle \hat o\hat q\hat p > \frac \pi 2$ which contradicts the top angle comparison $\measuredangle \hat o\hat q \hat p \leq \measuredangle oqp$ in the generalized Toponogov triangle comparison theorem \cite[Theorem 1.3]{HI2}.  Therefore there are no critical points of $L_o$ in the set $\{ x \in  M : d( o,x) \geq\hat \ell - convex_{\widehat M}(\hat o^\prime)\}$ other than $p$.
  
  This completes the proof because by (3)
  $$ M =  \{ x\in M : d(o,x)<\tilde \ell\}\cup\{ x \in  M : d(o,x) \geq \hat\ell - convex_{\widehat M}(\hat o^\prime)\}.$$
  
 \end{proof}
 
The Minimal Diameter Theorem \cite[Theorem 6.6]{CE}  asserts that a simply connected, complete Riemannian manifold whose sectional curvatures lie in the closed interval
$[\frac 14, 1]$ is either homeomorphic to a sphere or isometric to a symmetric space.  As a result of the proof of Theorem \ref{t:TPT} we obtain:

\begin{corollary}\label{c:TPT}
Let $(M,o)$ be a complete pointed Riemannian manifold with $R= \max\{d(o,p):p\in M\}$. Let $(\widetilde M, \tilde o)$ and $(\widehat M,\hat o)$ be two model surfaces with
$\max\{ d(\tilde o,\tilde p) : \tilde p \in \widetilde M \} = \tilde \ell$ and $\max\{ d(\hat o,\hat p): \hat p \in \widehat M\} = \hat\ell$ both finite.
Assume that:
\begin{enumerate}
\item For every geodesic triangle $\triangle opq$ in $M$, there exists a corresponding triangle $\triangle \hat o\hat p\hat q$ in $\widehat M$
with same side lengths and satisfying $L_o(\sigma(t)) \geq L_{\hat o}( \hat\sigma(t))$ for $0\leq t \leq d(p,q)$,
\item For every geodesic  triangle $\triangle opq$ in $M$ for which there exists a corresponding triangle $\triangle \tilde o\tilde p\tilde q$ in $\widetilde M$ with the same side lengths, we have $L_o(\sigma(t)) \leq L_{\tilde o}(\tilde\sigma(t))$ for  $0\leq t \leq d(p,q)$,
\item And $\tilde \ell = \hat \ell - conv_{\widehat M}(\hat o^\prime)$ where $\hat o^\prime$ is the opposite vertex to $\hat o$.
\end{enumerate}
Then:
\begin{enumerate}
\item[(4)] if $R>\tilde \ell$,  $M$ is homeomorphic to a sphere.
\item[(5)] if $R=\tilde \ell$,  $M$ is an Allamigeon--Warner manifold at $o$.
\end{enumerate}
\end{corollary}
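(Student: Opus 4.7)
The plan is to reduce to two disjoint cases by first showing that $R\geq \tilde\ell$ always holds. Since $\hat\ell$ is finite, condition (1) (together with the maximal radius bound from \cite{HI2}) makes $M$ compact, and Proposition \ref{p:inj} applied via condition (2) yields $\mathrm{inj}_M(o)\geq\tilde\ell$. Compactness then gives $R\geq\mathrm{inj}_M(o)\geq\tilde\ell$, so the dichotomy $R>\tilde\ell$ versus $R=\tilde\ell$ in (4) and (5) is exhaustive.

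For case (4), where $R>\tilde\ell$, I would rerun the proof of Theorem \ref{t:TPT} essentially verbatim. The equality $\tilde\ell=\hat\ell-\mathrm{conv}_{\widehat M}(\hat o^\prime)$ from hypothesis (3), combined with the strict inequality $R>\tilde\ell$, delivers $R>\hat\ell-\mathrm{conv}_{\widehat M}(\hat o^\prime)$. This is precisely the strict inequality used in the proof of Theorem \ref{t:TPT} in two places: first, to guarantee that $\{\hat x\in\widehat M:d(\hat o,\hat x)\geq R\}$ is strictly convex (yielding uniqueness of the maximum of $L_o$), and second, to run the Toponogov-type angle comparison from \cite[Theorem 1.3]{HI2} showing that no other critical point of $L_o$ lies in $\{x\in M:d(o,x)\geq \hat\ell-\mathrm{conv}_{\widehat M}(\hat o^\prime)\}$. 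Since $\mathrm{inj}_M(o)\geq\tilde\ell$ rules out critical points in the open ball of radius $\tilde\ell$, and since $\{x:d(o,x)\geq\tilde\ell\}\subseteq\{x:d(o,x)\geq\hat\ell-\mathrm{conv}_{\widehat M}(\hat o^\prime)\}$, the only critical point is the maximum, and \cite{MG} gives the sphere conclusion.

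For case (5), where $R=\tilde\ell$, I would argue directly rather than attempting to extract a topological conclusion. Let $v\in S_oM$ be an arbitrary unit tangent vector and let $c(v)$ denote the distance from $o$ to the cut point along $\gamma_v(t)=\exp_o(tv)$. On the one hand, a minimizing geodesic realizes the distance up to and including its cut point, so
\[
c(v)=d\bigl(o,\gamma_v(c(v))\bigr)\leq R.
\]
On the other hand, Proposition \ref{p:inj} applied under condition (2) gives $c(v)\geq\mathrm{inj}_M(o)\geq\tilde\ell=R$. Therefore $c(v)=R$ for every unit $v\in S_oM$, which is the defining property of an Allamigeon--Warner manifold at $o$.

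The main obstacle is really bookkeeping: one must verify that the strict inequality fed into the proof of Theorem \ref{t:TPT} (namely $R>\hat\ell-\mathrm{conv}_{\widehat M}(\hat o^\prime)$) is supplied in case (4) by combining (3) with $R>\tilde\ell$, so that both the strict-convexity step and the angle-comparison step go through unchanged; case (5) is then an immediate consequence of Proposition \ref{p:inj} together with the standard observation that a geodesic is minimizing up to its cut distance.
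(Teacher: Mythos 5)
Your proof is correct and follows essentially the same route as the paper: case (4) reduces to a rerun of the proof of Theorem~\ref{t:TPT} once you note that hypothesis~(3) with $R>\tilde\ell$ supplies the strict inequality $R>\hat\ell-\mathrm{conv}_{\widehat M}(\hat o^\prime)$, and case (5) uses Proposition~\ref{p:inj} to pin the cut distance at $R$ in every direction, i.e.\ the cut locus in $T_oM$ is the sphere of radius $R$. One small imprecision: having $c(v)=R$ for every $v\in S_oM$ is not the \emph{definition} of an Allamigeon--Warner manifold; it says that $M$ has a spherical cut locus at $o$ in the sense of Besse's Definition~5.22, and one must then invoke Besse's Theorem~5.43 (as the paper does) to conclude that $M$ is Allamigeon--Warner at $o$. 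That citation is the step your argument glosses over; otherwise the reasoning agrees with the paper's.
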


\begin{proof}
If $R > \tilde\ell$, the proof of Theorem \ref{t:TPT} shows that $L_o$ has no critical points other than at the unique maximum, and hence $M$ is homeomorphic to a sphere.

Since by Proposition \ref{p:inj}, (2) implies $inj_M(o) \geq \tilde \ell$, if  $R=\tilde \ell$, then the cut locus of $o$ in $T_oM$ is the sphere of radius $R$ centered at the origin. Thus in the terminology of \cite[Definition 5.22]{AB}, $M$ has a spherical cut locus at $o$. Therefore $M$ is an Allamigeon--Warner manifold at $o$ by \cite[Therorem 5.43]{AB}.
\end{proof}

\begin{remark}
 Allamigeon--Warner manifolds at a point can be expressed as a topological union of a closed disk and a disk bundle over a submanifold. 
 Simply connected ones have the homology type of a compact rank one symmetric space. See \cite{AA,AB,FW}.  
\end{remark}

\section{A property of short geodesic rays emanating from a nonconjugate cut point}

The following result is needed to prove Proposition \ref{p:CS}.

\begin{lemma}\label{l:rays}
Let $M$ be a real analytic Riemannian manifold, and let $o\in M$.  If $p\in C(o)$ is a nonconjugate cut point , then for every unit speed geodesic $\sigma$ emanating from $p$, there exists a $b >0$ such that $L_o\circ\sigma$ has a 2--sided derivative $(L_p\circ\sigma)^\prime(t)$ for all $0<t<b$. 
\end{lemma}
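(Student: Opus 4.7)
The plan is to exploit real analyticity together with the classical structure of the distance function near a nonconjugate cut point.  Because $p$ is a nonconjugate cut point of $o$, a standard argument (using compactness of the unit sphere in $T_oM$ and the nonconjugacy hypothesis) yields a finite set of unit speed minimizing geodesics $\gamma_1,\dots,\gamma_k$ joining $o$ to $p$, with $k\ge 2$.  Set $a=d(o,p)$.  For each $i$, since $\gamma_i$ has no conjugate points on $[0,a]$, the exponential map $\exp_o$ is a local diffeomorphism at $a\gamma_i'(0)\in T_oM$, and indeed a real analytic local diffeomorphism because $M$ is real analytic.  Consequently there is a neighborhood $U$ of $p$ and real analytic functions $f_i\colon U\to\mathbb R$ with $f_i(p)=a$, where $f_i(q)$ is the length of the unique geodesic near $\gamma_i$ joining $o$ to $q$.

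The next step is to show that, after shrinking $U$, one has
\[
L_o(q)=\min_{1\le i\le k}f_i(q)\qquad\text{for all }q\in U.
\]
The inequality $L_o(q)\le\min_i f_i(q)$ is immediate.  For the reverse inequality, suppose the contrary: then one finds $q_n\to p$ and minimizing geodesics $\eta_n$ from $o$ to $q_n$ whose initial vectors $\eta_n'(0)$ cluster at some $v\in T_oM$.  By a standard limit argument, $t\mapsto\exp_o(tv)$ is a unit speed minimizing geodesic from $o$ to $p$, hence one of the $\gamma_i$; but then for large $n$, $\eta_n$ is the local geodesic near $\gamma_i$ and so has length $f_i(q_n)$, contradicting $L_o(q_n)<f_i(q_n)$.

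With this established, fix a unit speed geodesic $\sigma$ emanating from $p$ and pick $b_0>0$ so that $\sigma([0,b_0))\subset U$; then for $0\le t<b_0$,
\[
L_o\circ\sigma(t)=\min_{1\le i\le k}f_i\circ\sigma(t).
\]
Since $M$ and $\sigma$ are real analytic, each $f_i\circ\sigma$ is real analytic on $[0,b_0)$.  For each pair $i\ne j$, the real analytic function $f_i\circ\sigma-f_j\circ\sigma$ either vanishes identically on a neighborhood of $0$ or has only isolated zeros; either way, there is some $\epsilon_{ij}>0$ such that on $(0,\epsilon_{ij})$ this difference is either identically zero or nowhere zero, and in the latter case has constant sign.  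Taking $b$ to be the minimum of $b_0$ and all $\epsilon_{ij}$, the pointwise ordering of $\{f_i\circ\sigma(t)\}_{i=1}^k$ is constant throughout $(0,b)$.  Therefore there is a fixed index $i_0$ with $L_o\circ\sigma(t)=f_{i_0}\circ\sigma(t)$ for all $0<t<b$, so $L_o\circ\sigma$ is real analytic on this interval and in particular admits a two-sided derivative at every $0<t<b$.

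The most delicate step is the identification $L_o=\min_i f_i$ in a neighborhood of $p$; the rest reduces to the elementary but crucial observation that real analytic functions of one variable have isolated zeros unless identically zero, which forces a clean "winner" among the finitely many branches of $L_o$ along any outgoing direction.
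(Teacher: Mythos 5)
Your proof is correct and follows essentially the same strategy as the paper: both represent $L_o=\min_{1\le i\le k}f_i$ near $p$ with each $f_i$ real analytic (via the inverse function theorem applied to $\exp_o$ at the finitely many nonconjugate minimizing directions), and both exploit the fact that a real analytic function of one real variable is either identically zero or has isolated zeros to conclude that a single branch $f_{i_0}$ realizes the minimum along $\sigma$ on a small punctured interval. The paper organizes its argument around Ozols' description of $C(o)$ via the hypersurfaces $K_{ij}$, breaking into three cases and, in the hardest one, interleaving sequences and invoking the Intermediate Value Theorem to force two branches to agree identically; your version handles all cases at once by shrinking $b$ so that every pairwise difference $f_i\circ\sigma-f_j\circ\sigma$ is either identically zero or of constant sign on $(0,b)$, which is a cleaner packaging of the same underlying idea, though it does not separately flag (as the paper does) that only the last case actually uses real analyticity.
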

\begin{proof}
The proof relies on  the structure of the cut locus in a neighborhood of a nonconjugate cut point given by Ozols \cite{VO}. 
 Let us briefly describe this result. Let $p \in C(o)$ be a nonconjugate cut point. Then there are finitely many minimizing geodesics joining $o$ to $p$.  Thus there exist vectors $Y_i\in T_oM$, $i= 1,\dots, N$, tangent to these geodesics of length $|Y_i| =d(o,p)$ such that $\exp_o(Y_i)=p$ for all $i=1,\dots,N$. Because $p$ is not conjugate along any of these geodesics, the differential of $\exp_o$ is nonsingular at every $Y_i$. Applying the Inverse Function Theorem, there exist a neighborhood $U$ of $p$ in $M$ and  neighborhoods $V_i$ of $Y_i$ in $T_oM$, such that
 $$ \exp_o|V_i : V_i \rightarrow U$$
 is a diffeomorphism for all $i$.  When $M$ is real analytic, these diffeomorphisms are real analytic.
 Thus we can define real-valued functions
 $$f_i : U \rightarrow \mathbf{R}$$
 by $f_i(q) = |(\exp_o|V_i)^{-1}(q)|$ for $i=1,\dots,N$. When $M$ is real analytic, so are these functions. 
 
According to \cite{VO}, if we set
$K_{ij} = \{q\in U : f_i(q)=f_j(q)\}$, $H_{ij} =\{q \in U : f_i(q) \geq f_j(q)\}$, and $C_{ij} = K_{ij} \cap \bigcap_{k=1}^N H_{ki}$,
then
$$ C(o) \cap U = \bigcup_{i<j} C_{ij},$$
because $q \in C(o)\cap U$ if and only if there are at least two minimizing geodesics joining $o$ to $q$, hence if and only if there exists a pair $i<j$ such that $f_i(q)=f_j(q)$ and for all $k$,  $f_k(q) \geq f_i(q)$. Moreover, if $U$ has been chosen small enough, the gradients of $f_i$ and $f_j$ for $i\neq j$ are independent throughout $U$. Thus we can assume that the $K_{ij}$ for $i\neq j$ are hypersurfaces whose tangent spaces at $q \in K_{ij}$ are given by 
$$T_qK_{ij} = ker( (df_i)_q -(df_j)_q).$$

Finally we note that the distance function $L_o$ from $o$ is clearly given by
\begin{equation}\label{e:Lo}
L_o = \min\{ f_1,\dots,f_N\}.
\end{equation}

Now let $X \in T_pM$ be a unit tangent vector, and let $\sigma(t) = \exp_p(tX)$ for $0<t < r$ be the geodesic ray segment  contained in $U$ emanating from $p$ with initial tangent vector $\sigma^\prime(0) = X$. We must prove that there exists a $b >0$ such that $L_o\circ\sigma$ has a 2--sided derivative $(L_o\circ\sigma)^\prime(t)$ for all $0<t<b$. This is accomplished by considering three cases.

(1) Suppose that $X$ is not tangent to any of the hypersurfaces $K_{ij}$. Then the geodesic $\sigma$ is transverse to $K_{ij}$ at $p$ for any $i$ and $j$. Hence for sufficiently small $t$, $\sigma(t) \notin K_{ij}$. Therefore, as  there are only finitely many such hypersurfaces, there exists a $b>0$ such that $\sigma(t) \notin \cup_{i<j} K_{ij}$ for all $0<t<b$. Therefore
$\sigma(t) \notin C(o)\cap U$ for all $0<t<b$ which implies that $L_o\circ\sigma$ has a 2--sided derivative $(L_o\circ\sigma )^\prime(t)$ for all $0<t<b$. 

Next suppose that $X \in T_pK_{ij}$ for some $i < j$ and set $$a = \inf\{0<t<r : \sigma(t) \in C(o) \}.$$ 

(2) If $a>0$, set $b=a$. Then $\sigma(t) \notin C(o)\cap U$ for all $0<t<b$ which again implies that $L_o\circ\sigma$ has a 2--sided derivative $(L_o\circ\sigma )^\prime(t)$ for all $0<t<b$. 
 
 (3) If $a=0$, then since there are only finitely many $C_{ij}$ and infinitely many arbitrarily small $t$ with $\sigma(t) \in C(o)$, there exists at least one pair $i<j$ and an infinite decreasing sequence
 $$ t_1 > t_2 >t_3 > \cdots> t_n > \cdots$$
 converging to $0$ such that $\sigma(t_n) \in C_{ij}$ for all $n$. Then $ f_i(\sigma(t_n))-f_j(\sigma(t_n))=0$ for all $n$.
 Therefore, by real analyticity of $f_i\circ\sigma-f_j\circ\sigma$, it follows that $ f_i(\sigma(t))-f_j(\sigma(t))=0$ for all $t$.
  Suppose $\bar i <\bar j$ is another such pair with 
 $\sigma(\bar t_n) \in C_{\bar i\bar j}$ for an infinite decreasing sequence of $\bar t_k$ converging to $0$.
In the same way we conclude that $f_{\bar i}\circ\sigma(t )=f_{\bar j}\circ\sigma(t)$ for all $t$. If $\bar i$ or $\bar j$ is one of $ i$ or $j$, then  $f_i\circ\sigma(t) = f_j\circ\sigma(t) = f_{\bar i}\circ\sigma(t )=f_{\bar j}\circ\sigma(t)$ for all $t$.  Suppose $\{i,j\}$ and $\{\bar i,\bar j\}$ are disjoint. By taking subsequences we can assume that 
$$ t_1 > \bar t_1 > t_2 > \bar t_2 > \cdots>t_n>\bar t_n>t_{n+1}>\cdots.$$
Thus, since $\sigma(t_n) \in C_{ij}\subset H_{\bar i i}$,
$$ f_i\circ \sigma(t_n)  \leq f_{\bar i} \circ\sigma(t_n)$$
and since $\sigma(t_n) \in C_{\bar i\bar j}\subset H_{i\bar i}$,
$$ f_{\bar i}\circ \sigma(\bar t_n) \leq f_{i}\circ\sigma(\bar t_n).$$
Thus $f_i\circ\sigma(t_n)-f_{\bar i}\circ\sigma(t_n)\leq 0$ and 
$f_i\circ\sigma(\bar t_n)-f_{\bar i}\circ\sigma(\bar t_n)\geq 0$ displays alternating signs.
By the Intermediate Value Theorem, there exists an infinite sequence $t^*_n$ with $t_n \leq t^*_n \leq \bar t_n$
with $$f_i\circ\sigma(t^*_n) -f_{\bar i}\circ\sigma(t^*_n) = 0.$$  Under the assumption that $M$ is real analytic we obtain
$f_i\circ\sigma(t) =f_{\bar i}\circ\sigma(t)$ for all $t$.

Let $I^\ast$ be the set of all indices $i$ for which there exists a $j$ such that $\sigma(t) \in C_{ij}$ (or $C_{ji}$  if $j<i$) for a sequence of arbitrarily small $t$.
The previous paragraph shows that if $i,j\in I^\ast$, then $f_i\circ\sigma(t) =f_j\circ\sigma(t)$ for all $t$. Next we will prove that if $i\in I^\ast$ and $\bar j\notin I^\ast$, then $f_i\circ\sigma(t) > f_{\bar j}\circ\sigma(t)$ for all sufficiently small $t$.
First observe that there is a  $j \in I^\ast$ and decreasing sequence $t_n$ converging to $0$ such that
$$\sigma(t_n) \in C_{ij} \subset \cap_{k=1}^N H_{k i}$$
for all $n$. In particular $f_{\bar j}(\sigma(t_n)) \geq f_i(\sigma(t_n))$ for all $n$. Now if there were arbitrarily small $t$ with
$f_i\circ\sigma(t) \geq f_{\bar j}\circ\sigma(t)$, we could apply the Intermediate Value Theorem and real analyticity as in the previous paragraph and conclude that $f_{\bar j}\circ\sigma(t) = f_i\circ\sigma(t)$ for all $t$. In particular 
$f_{\bar j}\circ\sigma(t_n) = f_i\circ\sigma(t_n) \leq f_k\circ\sigma(t_n)$ for all $t$ and all $k$. Thus $\sigma(t) \in C_{i\bar j}$
for arbitrarily small $t$.
It then follows that $\bar j \in I^\ast$ which is a contradiction.

Thus by Equation (\ref{e:Lo}), there exists a $b>0$ such that  $L_o\circ \sigma(t) = f_i\circ\sigma(t)$ for any $i \in I^\ast$ and $0\leq t <b$.
Therefore, for $0<t<b$, the 2-sided derivative of $L_o\circ\sigma$ at $t$ exists and is given by
$(L_o\circ \sigma)^\prime(t) = (f_i\circ\sigma)^\prime(t)$
for any $i \in I^\ast$ because $f_i$ is differentiable. This completes the proof of case (3).
\end{proof}

\begin{remark}
The analysis of cases (1) and (2) is valid only assuming $M$ is $C^\infty$.  Case (3) is the only place that utilizes the real analyticity hypothesis. Since almost all directions at $p$ are not tangent to any of the $K_{ij}$, case (1) shows that the conclusion of Lemma \ref{l:rays} holds for almost all directions emanating from $p$ when $M$ is only $C^\infty$. 
\end{remark}

\subsection{Examples} The cut loci of ellipsoids of revolution  \cite{ST} provide examples of cases (2) and (3) in the proof of Lemma \ref{l:rays}.
For a nonvertex point in an oblate ellipsoid, the cut locus is an arc  contained in the line of latitude centered about the antipodal point.  The points in the interior of this arc are nonconjugate cut points, and a short ray initially tangent to this  
 arc only meets the cut locus at the initial point as in case (2).  For a nonvertex point in a prolate ellipsoid, the cut locus is an arc in the opposite meridian.  The points in the interior of this arc are nonconjugate cut points, and a short ray initially tangent to this  arc  remains in the cut locus as in case (3).


\begin{thebibliography}{WWW}

\bibitem{AA} A. Allamigeon. Properti\' et\' es globales des espaces de Riemann harmoniques. Ann. Inst. Fourier 15 (1965) 91--132.

\bibitem{AB} A. Besse. Manifolds all of whose Geodesics are Closed. Springer Verlag, Berlin, Heidelberg, New York, 1978.

\bibitem{IC} I. Chavel. Riemannian Geometry: A Modern Introduction. Cambridge University Press, Cambridge, 1993

\bibitem{CE} J. Cheeger and D. Ebin. Comparison Theorems in Riemannian Geometry. North--Holland, Amsterdam,1975.

\bibitem{JD} J. Dibble. The convexity radius of a riemannian manifold. Asian J. Math 21 (2017) 169--174.


\bibitem{GW} R. Greene and H. Wu. Function Theory on manifolds which possess a pole,  Lecture Notes in Mathematics 699. Springer-Verlag, Berlin-Heidelberg-New York, 1979.

\bibitem{MG} M. Gromov. Curvature, diameter, and Betti numbers. Comment. Math. Helv. 56 (1981), 179--195.

\bibitem{HI1} J. Hebda and Y. Ikeda. Replacing the Lower Curvature Bound in Toponogov's Comparison Theorem by a Weaker Hypothesis. Tohoku Math. J.  69 (2017), 305--320.

\bibitem{HI2} J. Hebda and Y. Ikeda. Necessary and Sufficient Conditions for a Triangle Comparison Theorem. Tohoku Math. J. 74 (2022) 1--36.

\bibitem{HI3} J. Hebda and Y. Ikeda. Generalized Maximal Diameter Theorems. (preprint)

\bibitem{IMS} Y. Itokawa, Y. Machigashira, K. Shiohama. Generalized Toponogov's Theorem for manifolds with radial curvature bounded below. Contemporary Mathematics  332 (2003) 121--130.

\bibitem{ISU} N. Innami, K. Shiohama, and Y. Uneme. The Alexandrov--Toponogov Comparison Theorem for Radial Curvature. Nihonkai Math. J. 24 (2013),  57--91.

\bibitem{VO} V. Ozols. Cut loci in Riemannian manifolds. Tohoku Math. J. 26 (1974), 219--227.


\bibitem{KT} K. Kondo and M. Tanaka. Toponogov comparison theorem for open triangles. Tohoku Math. J. 63 (2011) 363--396.

\bibitem{SST} K. Shiohama, T. Shioya, and M. Tanaka. The geometry of total curvature on complete open surfaces,
Cambridge Tracks in Math. 159. Cambridge University Press, Cambridge, 2003.

\bibitem{ST} R. Sinclair and M. Tanaka.  The cut locus of a two--sphere of revolution and Toponogov's comparison theorem. Tohoku Math. J. 59 (2007) 379--399.

\bibitem {TAS} M. Tanaka, T. Akamatsu, R, Sinclair, M. Yamaguchi. Generalized von Mangoldt surfaces of revolution and asymmetric two-spheres of revolution with simple cut locus structure. 
https://doi.org/10.48550/arXiv.2202.00853.

\bibitem{FW} F. Warner. Conjugate loci of constant order. Ann. Math. 86 (1967) 192--212.
\end{thebibliography}
\end{document}